\definecolor {processblue}{cmyk}{0.96,0,0,0}
\numberwithin{equation}{section}
\newcommand{\leqnomode}{\tagsleft@true\let\veqno\@@leqno}
\newcommand{\reqnomode}{\tagsleft@false\let\veqno\@@eqno}
\newcommand{\defi}[1]{{\textit{#1}}}
\newcommand{\flag}{{\mathcal{F} \ell}}
\newcommand{\C}{{\mathbb{C}}}
\newcommand{\R}{{\mathbb{R}}}
\newcommand{\Z}{{\mathbb{Z}}}
\newcommand{\supp}{\mathrm{supp}}
\newcommand{\rep}{\mathrm{rep}}
\DeclareMathOperator{\GL}{GL}
\newcommand{\Cstar}{{\C^{\ast}}}
\def\a{\mathbf a}
\newcommand{\Zwp}[1]{{Z_{(#1)}'}}
\newcommand{\Zw}[1]{{Z_{(#1)}}}
\newcommand{\T}{\mathbb{T}}
\DeclareMathOperator{\Perm}{Perm}
\DeclareMathOperator{\Conv}{Conv}
\newtheorem{theorem}{Theorem}[section]
\newtheorem{lemma}[theorem]{Lemma}
\newtheorem{proposition}[theorem]{Proposition}
\newtheorem{corollary}[theorem]{Corollary}
\theoremstyle{definition}
\newtheorem{example}[theorem]{Example}
\newtheorem{definition}[theorem]{Definition}
\newtheorem{remark}[theorem]{Remark}
\begin{document}

\title[On Schubert varieties of complexity one]{On Schubert varieties of complexity one}

\date{\today}

\author[Eunjeong Lee]{Eunjeong Lee}
\address[E. Lee]{Center for Geometry and Physics, Institute for Basic Science (IBS), Pohang 37673, Republic of Korea}
\email{eunjeong.lee@ibs.re.kr}

\author[Mikiya Masuda]{Mikiya Masuda}
\address[M. Masuda]{Osaka City University Advanced Mathematical Institute \&
Department of Mathematics, Graduate School of Science, Osaka City 
University, Sumiyoshi-ku, Sugimoto, 558-8585, Osaka, Japan}
\email{masuda@osaka-cu.ac.jp}

\author[Seonjeong Park]{Seonjeong Park}
\address[S. Park]{Department of Mathematical Sciences, KAIST, Daejeon, Republic of Korea}
\email{seonjeong1124@gmail.com}

\subjclass[2010]{Primary: 14M25, 14M15, secondary: 05A05}

\keywords{Schubert varieties, torus action, pattern avoidance, flag Bott--Samelson varieties, flag Bott maniflolds}

\maketitle

\begin{abstract}
Let $B$ be a Borel subgroup of $\GL_n(\C)$ and $\T$ a maximal torus contained in $B$. Then $\mathbb{T}$ acts on $\GL_{n}(\C)/B$ and every Schubert variety is $\mathbb{T}$-invariant. We say that a Schubert variety is of complexity $k$ if a maximal $\T$-orbit in $X_w$ has codimension~$k$.  In this paper, we discuss topology, geometry, and combinatorics related to Schubert varieties of complexity one.
\end{abstract}


\section{Introduction}

The flag manifold $\flag(\C^n)$ is the homogeneous space $\GL_n(\C)/B$, where $B$ is the set of all upper triangular matrices in $\GL_n({\C})$. The left action of $B$ on $\flag(\C^n)$ has finitely many orbits $BwB/B$, where $w$ is a permutation in $\mathfrak{S}_n$, and the Schubert variety $X_w$ is the (Zariski) closure of the $B$-orbit ${BwB/B}$. Most Schubert varieties are not smooth and they are desingularized to Bott--Samelson varieties, see~\cite{bo-sa,De74}.

Let $\T$ be the set of all diagonal matrices in $\GL_n(\C)$. Then $\T$ is isomorphic to the torus $(\C^\ast)^n$ and acts on $\flag(\C^n)$ by the left multiplication, and every Schubert variety $X_w$ is a $\mathbb{T}$-invariant irreducible subvariety of $\flag(\C^n)$. 
We say that a Schubert variety $X_w$ is of complexity $k$ with respect to the action of~$\T$ (or simply, $X_w$ \defi{is of complexity $k$}) if 
a maximal $\T$-orbit in $X_w$ has codimension~$k$. 
In this paper, we are interested in the Schubert varieties of complexity one.

There were several studies on Schubert varieties of complexity zero (i.e., toric Schubert varieties) and related combinatorics.
It is known that $X_w$ is of complexity zero if and only if a reduced decomposition of $w$ consists of distinct letters, and in this case $X_w$ is smooth and isomorphic to a Bott--Samelson  variety (\cite{Fan}, \cite{Karup}). On the other hand, permutation patterns are related to the form of reduced decompositions. A reduced decomposition of $w$ consists of distinct letters if and only if $w$ avoids the patterns $321$ and $3412$, see~\cite{Tenner2012}. 
It was also shown in~\cite{Tenner2007} that a reduced decomposition of $w$ consists of distinct letters if and only if the Bruhat interval $[e,w]$ is isomorphic to the Boolean algebra $\mathfrak{B}_{\ell(w)}$ of rank $\ell(w)$, where $\ell(w)$ is the length of~$w$. The \defi{Bruhat interval polytope~$Q_{v,w}$}, introduced in~\cite{ts-wi}, is the convex hull of the points $(u(1),\dots,u(n))$ in $\R^{n}$ for all $v\leq u\leq w$.
The Schubert variety $X_w$ is a smooth toric variety if and only if $Q_{e,w}$ is combinatorially equivalent to the $\ell(w)$-dimensional cube $I^{\ell(w)}$, see \cite{LMP1}.

\begin{theorem} \cite{Fan,Karup,Tenner2007,Tenner2012,LMP1} \label{thm:1}The following are equivalent:
\begin{enumerate}
\item[$(0)$] $X_w$ is a toric variety \textup{(}i.e., of complexity zero\textup{)}.
\item[$(1)$] $X_{w}$ is a smooth toric variety.
\item[$(2)$] $w$ avoids the patterns $321$ and $3412$.
\item[$(3)$] A reduced decomposition of $w$ consists of distinct letters.
\item[$(4)$] $X_w$ is isomorphic to a Bott--Samelson variety.
\item[$(5)$] The Bruhat interval $[e,w]$ is isomorphic to $\mathfrak{B}_{\ell(w)}$, the Boolean algebra of rank $\ell(w)$.
\item[$(6)$] The Bruhat interval polytope $Q_{e,w}$ is combinatorially equivalent to the $\ell(w)$-dimensional cube.
\end{enumerate}
\end{theorem}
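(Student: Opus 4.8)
This theorem collects results proved in \cite{Fan,Karup,Tenner2007,Tenner2012,LMP1}, so the plan is to arrange them into one cycle of implications, indicating which reference supplies each arrow and isolating the single step that carries real geometric content. Throughout I use the elementary equality $\dim X_w=\ell(w)$; thus ``$X_w$ is of complexity zero'' means precisely that the generic $\T$-orbit in $X_w$ has full dimension $\ell(w)$.

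\textbf{Combinatorial part.} I would first dispatch the purely combinatorial equivalences $(2)\Leftrightarrow(3)\Leftrightarrow(5)$. That $w$ has a reduced decomposition consisting of pairwise distinct simple reflections, that the Bruhat interval $[e,w]$ is the Boolean algebra $\mathfrak{B}_{\ell(w)}$, and that $w$ avoids $321$ and $3412$ are mutually equivalent --- this is \cite{Tenner2007} on the Bruhat-order side and \cite{Tenner2012} on the reduced-word side --- and I would simply quote these statements.

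\textbf{Geometric part of the cycle.} Next, $(1)\Rightarrow(0)$ is immediate (a smooth toric variety is toric), and $(3)\Rightarrow(1)$ together with $(3)\Leftrightarrow(4)$ are \cite{Fan,Karup}: when a reduced word of $w$ consists of distinct letters the associated Bott--Samelson variety is a split iterated $\mathbb{P}^1$-tower, hence a smooth toric variety (a Bott manifold), and the Bott--Samelson map $Z\to X_w$ is an isomorphism; conversely $X_w$ being smooth and toric forces the distinct-letters condition. This leaves only $(0)\Rightarrow(3)$ --- equivalently, by the combinatorial part, $(0)\Rightarrow(2)$ --- to close the chain $(0)\Leftrightarrow(1)\Leftrightarrow(2)\Leftrightarrow(3)\Leftrightarrow(4)\Leftrightarrow(5)$, and I expect this to be the main obstacle, since complexity zero is a far weaker hypothesis than smoothness. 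I would prove it by contraposition: if $w$ contains $321$ or $3412$, then $X_w$ is not of complexity zero. The scheme is to settle the two minimal instances directly --- $X_{321}=\flag(\C^3)$ (smooth) and $X_{3412}\subset\flag(\C^4)$ (singular) --- by writing the $\T$-action on an affine chart, or listing the $\T$-weights at the worst $\T$-fixed point, and checking that the generic $\T$-orbit has codimension exactly one; and then to propagate the defect by a pattern-containment argument, so that an occurrence of $v$ in $w$ bounds the complexity of $X_w$ below by that of $X_v$. The base-case computations are routine; it is this monotonicity-under-patterns step, carried out in \cite{Fan,Karup}, that is the crux of the argument.

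\textbf{The polytope condition.} Finally, $(1)\Leftrightarrow(6)$ is \cite{LMP1}: the moment polytope of the smooth toric variety $X_w$ is the Bruhat interval polytope $Q_{e,w}$, and the smooth toric Schubert varieties are exactly the Bott manifolds, whose moment polytopes are combinatorially the $\ell(w)$-dimensional cube. Here I would again simply invoke the precise statement. Putting the arrows together yields $(0)\Leftrightarrow(1)\Leftrightarrow(2)\Leftrightarrow(3)\Leftrightarrow(4)\Leftrightarrow(5)\Leftrightarrow(6)$, completing the proof.
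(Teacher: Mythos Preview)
Your overall organisation is sound, and most of the attributions are correct. The one genuine gap is in your treatment of $(0)\Rightarrow(3)$.

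You propose to argue by contraposition via two base cases plus a ``monotonicity-under-patterns'' principle --- that an occurrence of $v$ in $w$ forces $c(w)\ge c(v)$ --- and you attribute this step to \cite{Fan,Karup}. That attribution is not right: neither paper proves a general complexity-monotonicity statement for pattern containment, and such a statement, while plausible, is not standard and would itself require a proof. This is the step you flag as ``the crux'', and as written it is unsupported.

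The route actually taken in the literature (and recapitulated in Section~\ref{sec:pattern-complexity} of the present paper) is much more direct and bypasses any pattern-propagation argument. One shows that the dimension of the moment polytope $Q_{e,w^{-1}}$ equals $|\supp(w)|$: the edges at the identity vertex are in bijection with the atoms $s_i\le w$ of $[e,w]$, and this local count determines $\dim Q_{e,w^{-1}}$ (see \cite[Corollary~7.13 and Remark~7.5(5)]{le-ma20}). Hence
\[
c(w)=\ell(w)-\dim_{\R}Q_{e,w^{-1}}=\ell(w)-|\supp(w)|=\rep(w),
\]
and $(0)\Leftrightarrow(3)$ is immediate: complexity zero is exactly the statement that a reduced word has no repeated letter. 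No base cases, no monotonicity, no geometry beyond the moment-map picture. Once you have this formula, Tenner's result \cite{Tenner2012} that $\rep(w)=0\iff w$ avoids $321$ and $3412$ gives $(0)\Leftrightarrow(2)$ as well.

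Two minor corrections: the moment polytope of $X_w$ is $Q_{e,w^{-1}}$, not $Q_{e,w}$ (this does not affect the equivalence with $(6)$ because the relevant conditions are invariant under $w\mapsto w^{-1}$, but it is worth stating correctly); and you have not actually closed the loop through $(4)$ --- you argue $(3)\Rightarrow(4)$ but not the reverse direction, which needs a word (e.g.\ that any Bott--Samelson variety is smooth, whence $(4)\Rightarrow X_w$ smooth, and then the other equivalences apply).
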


Recall that for a decomposition $\underline{w}=s_{i_{1}}\dots s_{i_{\ell}}$ of a permutation $w$, the Bott--Samelson variety $Z_{\underline{w}}$ is defined by the orbit space of $\overline{Bs_{i_1}B}\times\cdots\times \overline{Bs_{i_\ell}B}$ by the right action of $B^{\ell}$ in~\eqref{eq_def_of_Theta_BS}. Then $Z_{\underline{w}}$ has an iterated $\C P^{1}$-bundle structure because $\overline{B s_{i_k} B}/B \cong \C P^{1}$. Note that $\overline{B s_{i_k} B}/B=X_{s_{i_{k}}}$ and $\overline{B s_{i_k} B}/B \cong \mathcal{F}\ell(\C^{2})=\C P^{1}$.
A Bott--Samelson variety $Z_{\underline{w}}$ is not a toric variety in general, but it is diffeomorphic to a toric variety, called a Bott manifold. We refer the reader to~\cite{Gr-Ka} for more details.

In this article, we study an analog  of the equivalent statements~(1)$\sim$(6) in Theorem~\ref{thm:1} for Schubert varieties of complexity one. 
Whereas every toric Schubert variety is smooth, not every Schubert variety of complexity one is smooth. For example, the Schubert varieties $X_{3214}$ and $X_{3412}$ are of complexity one, but $X_{3214}$ is smooth and $X_{3412}$ is singular.

Jantzen~\cite{Jant} generalized the notion of Bott--Samelson varieties. For an ordered tuple of permutations $(w_{1},\dots ,w_{r})$, the variety $Z_{(w_{1},\dots,w_{r})}$ is the orbit space of $\overline{Bw_{1}B}\times\cdots\times \overline{Bw_{r}B}$ by the right action of $B^{r}$ in~\eqref{eq:action}. 
Unfortunately, there was no name for the variety $Z_{(w_{1},\dots,w_{r})}$ in~\cite{Jant}, but now, it is called a \defi{generalized Bott--Samelson variety} (cf.~\cite{BrionKannan} and \cite{Perrin}).
When $X_{w_{i}}$ is a flag manifold for every $i=1,\dots,r$, the variety $Z_{(w_{1},\dots,w_{r})}$ is called a \defi{flag Bott--Samelson variety}, see~\cite{FLS}.

\begin{theorem}\label{thm:smooth} For a permutation~$w$ in $\mathfrak{S}_n$, the following are equivalent:
\begin{enumerate}
\item[$(1^{\prime})$] $X_w$ is smooth and of complexity one.
\item[$(2^{\prime})$] $w$ contains the pattern $321$ exactly once and avoids the pattern $3412$.
\item[$(3^{\prime})$] There exists a reduced decomposition of $w$ containing $s_{i}s_{i+1}s_{i}$ as a factor and no other repetitions.
\item[$(4^{\prime})$]  $X_w$ is isomorphic to a flag Bott--Samelson variety $Z_{(w_{1},\dots,w_{r})}$ such that $r=\ell(w)-2$, $w_{k}=s_{j}s_{j+1}s_{j}$ for some $1\leq k\leq r$, $w_{i}=s_{j_{i}}$ for $i\neq k$, and $j_{1},\dots,j_{k-1},j_{k+1},\dots,j_{r},j,j+1$ are pairwise distinct.
\item[$(5^{\prime})$] The Bruhat interval $[e,w]$ is isomorphic to $\mathfrak{S}_3\times \mathfrak{B}_{\ell(w)-3}$. 
\item[$(6^{\prime})$]  The Bruhat interval polytope $Q_{e,w}$ is combinatorially equivalent to the product of the hexagon and the cube $I^{\ell(w)-3}$.
\end{enumerate}
\end{theorem}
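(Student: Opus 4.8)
The plan is to show that each of $(1')$, $(2')$, $(4')$, $(5')$, $(6')$ is equivalent to $(3')$. Two inputs are used throughout. The first is the Lakshmibai--Sandhya criterion: $X_w$ is smooth if and only if $w$ avoids both $3412$ and $4231$. The second is the identity $\text{(complexity of }X_w) = \ell(w) - |\supp(w)|$, where $\supp(w)$ denotes the set of simple reflections occurring in some (equivalently, any) reduced decomposition of $w$. To see this, recall that the moment image of $X_w$ for a generic invariant K\"ahler form is the Bruhat interval polytope $Q_{e,w}$, so the complexity equals $\dim_{\C} X_w - \dim Q_{e,w} = \ell(w) - \dim Q_{e,w}$; and $\dim Q_{e,w} = |\supp(w)|$, with ``$\geq$'' coming from the edges of $Q_{e,w}$ at the vertex $e$ (which point in the simple-root directions $\ve_i - \ve_{i+1}$ for $s_i \in \supp(w)$) and ``$\leq$'' from the fact that every edge direction $\ve_a - \ve_b$ equals $\sum_{i=a}^{b-1} (\ve_i - \ve_{i+1})$ with all $s_i \in \supp(w)$, which follows from the subword property of the Bruhat order. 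Hence $X_w$ is of complexity one exactly when $w$ has a reduced word in which exactly one letter is repeated, appearing there exactly twice.

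For $(2') \Leftrightarrow (3')$, which is the crux, I would proceed as follows. Assume $(3')$ and take a reduced word $\underline{u}_1\,(s_i s_{i+1} s_i)\,\underline{u}_2$ with $\underline{u}_1, \underline{u}_2$ having all letters distinct; then ``no other repetitions'' forces the letters of $\underline{u}_1$ and $\underline{u}_2$ to avoid $\{s_i, s_{i+1}\}$ and to be disjoint from each other. Since this reduced word contains a braid, $w$ is not fully commutative, hence $w$ contains $321$ (the $321$-avoiding permutations being the fully commutative ones, by Billey--Jockusch--Stanley and Stembridge). An inspection of the heap of this word --- where the three boxes of the braid form the only ``overlap'' and the remaining boxes are incomparable or lie along chains --- then shows that $321$ occurs exactly once and that $3412$ cannot occur, an occurrence of the latter requiring a configuration equivalent to $s_a s_{a+1} s_a s_{a+1}$ or to two independent repeated letters. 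Conversely, assume $w$ contains $321$ exactly once and avoids $3412$. By Theorem~\ref{thm:1}, $X_w$ is not toric, so no reduced word of $w$ has all letters distinct; using avoidance of $3412$ one shows first that $w$ nevertheless has a reduced word with exactly one repeated letter, and then that this repeat can be moved (by commutations and at most one braid move $s_i s_{i+1} s_i \leftrightarrow s_{i+1} s_i s_{i+1}$) into a factor $s_i s_{i+1} s_i$ --- the point being that $3412 = s_2 s_1 s_3 s_2$ is precisely the short configuration with a repeated letter that is \emph{not} a braid and that must therefore be excluded. This dictionary between occurrences of $321$ and $3412$ in one-line notation and the local structure of reduced words is the step I expect to be the main obstacle.

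For the geometric equivalences, $(3') \Leftrightarrow (4')$ is a regrouping of the reduced word in $(3')$: take the braid factor as $w_k = s_j s_{j+1} s_j$ and each remaining (distinct) letter $s_{j_i}$ as a one-term factor $w_i$; disjointness yields that $j_1, \dots, \widehat{j_k}, \dots, j_r, j, j+1$ are pairwise distinct, and conversely such data reassemble into a reduced word of the required shape. One then invokes the criterion for flag Bott--Samelson varieties --- the complexity-one analogue of $(3) \Leftrightarrow (4)$ in Theorem~\ref{thm:1}, cf.~\cite{FLS,Jant} --- that under exactly this distinctness hypothesis $Z_{(w_1, \dots, w_r)} \cong X_{w_1 \cdots w_r}$. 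For $(4') \Rightarrow (1')$: the variety $Z_{(w_1, \dots, w_r)}$ is an iterated fibre bundle whose fibres are copies of $\C P^1 = \flag(\C^2)$ together with one copy of $\flag(\C^3)$, all smooth, so $X_w$ is smooth; and, the tower being torus-equivariant, its complexity is the sum of the complexities of the fibres, namely $0 + \cdots + 0 + 1 + 0 + \cdots = 1$. For $(1') \Rightarrow (3')$: smoothness gives (Lakshmibai--Sandhya) that $w$ avoids $3412$ and $4231$, while complexity one gives (by the identity above) a reduced word with exactly one repeated letter; the argument of the previous paragraph then promotes this to a reduced word containing $s_i s_{i+1} s_i$ with no other repetition.

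Finally, for the poset and polytope statements, write $w = x \cdot (s_i s_{i+1} s_i) \cdot y$ as a length-additive product whose three supports are pairwise disjoint (as established in the combinatorial step). The subword property gives a poset isomorphism $[e,w] \cong [e,x] \times [e, s_i s_{i+1} s_i] \times [e,y]$; by Theorem~\ref{thm:1} the outer intervals are Boolean algebras while $[e, s_i s_{i+1} s_i] \cong \mathfrak{S}_3$, so $(3') \Rightarrow (5')$. For $(5') \Rightarrow (3')$, inside $[e,w] \cong \mathfrak{S}_3 \times \mathfrak{B}_{\ell(w)-3}$ one singles out the unique pair of atoms admitting more than one minimal upper bound (the atoms coming from the $\mathfrak{S}_3$-factor); these must be $s_i, s_{i+1}$ for consecutive indices --- a non-consecutive pair $s_a, s_b$ has the single minimal upper bound $s_a s_b$ --- their $\mathfrak{S}_3$-factor is $[e, s_i s_{i+1} s_i]$, and the complementary $\mathfrak{B}_{\ell(w)-3}$-factor reconstructs a distinct-letter part with support disjoint from $\{i, i+1\}$, giving a reduced word as in $(3')$. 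Finally $(5') \Leftrightarrow (6')$ follows from the Tsukerman--Williams description in~\cite{ts-wi} of the faces of $Q_{e,w}$ as Bruhat interval polytopes of subintervals of $[e,w]$: the poset isomorphism of $(5')$ transports the face lattice, $Q_{e, s_i s_{i+1} s_i}$ being a hexagon and the remaining factors cubes (Bruhat interval polytopes of distinct-letter permutations, by Theorem~\ref{thm:1}$(6)$); conversely, a polytope combinatorially equivalent to the hexagon times $I^{\ell(w)-3}$ has dimension $\ell(w)-1$, forcing complexity one by the identity above, and its face lattice recovers $[e,w] \cong \mathfrak{S}_3 \times \mathfrak{B}_{\ell(w)-3}$, hence $(3')$.
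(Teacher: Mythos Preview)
Your overall strategy and ingredients match the paper's closely: both rely on the Lakshmibai--Sandhya criterion, Tenner's identity $c(w)=\ell(w)-|\supp(w)|$, Daly's theorem for $(2')\Leftrightarrow(3')$, and the Tsukerman--Williams description of faces of Bruhat interval polytopes. The paper organises the implications as a cycle $(1')\Rightarrow(4')\Rightarrow(5')\Rightarrow(1')$ together with $(1')\Leftrightarrow(2')\Leftrightarrow(3')$ and $(1')\Leftrightarrow(6')$, rather than routing everything through $(3')$, but this is largely cosmetic.

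There is, however, a genuine gap in your argument for $(5')\Rightarrow(6')$. You assert that the poset isomorphism $[e,w]\cong\mathfrak{S}_3\times\mathfrak{B}_{\ell(w)-3}$ ``transports the face lattice'' of $Q_{e,w}$. This is false in general: the face lattice of $Q_{e,w}$ is \emph{not} determined by the abstract poset $[e,w]$. A witness appears in the paper itself: for $w=35412$ one has $w^{-1}=45132$, and $u\mapsto u^{-1}$ is a poset isomorphism $[e,35412]\cong[e,45132]$, yet the $f$-vectors $(60,123,82,19,1)$ and $(60,122,81,19,1)$ differ. The Tsukerman--Williams criterion for $Q_{x,y}$ to be a face of $Q_{e,w}$ depends on the graph $G_{x,y}^{e,w}$, which is built from the actual transpositions $(i,j)$ and not merely from abstract covering relations; an abstract poset isomorphism need not respect it. The paper's route to $(6')$ is accordingly more laborious: it proves (Proposition~\ref{prop:product}) that $Q_{e,s_rw}$ and $Q_{e,ws_r}$ are combinatorially equivalent to $Q_{e,w}\times I$ whenever $s_r\notin\supp(w)$, via a case analysis (Lemma~\ref{lemm-1}) tracking exactly how the graphs $G_{x,y}^{e,\widehat w}$ change under left and right multiplication by $s_r$. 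Your argument needs something of this kind. A smaller issue: your direct $(5')\Rightarrow(3')$ is plausible but incomplete, since ``the complementary $\mathfrak{B}_{\ell(w)-3}$-factor reconstructs a distinct-letter part'' tacitly assumes the factorisation of $w$ you are trying to establish; the paper instead reads off $c(w)=1$ from the atom count and rank, and then excludes the singular alternative by noting that $[e,s_{i+1}s_is_{i+2}s_{i+1}]$ cannot occur as a subinterval of $\mathfrak{S}_3\times\mathfrak{B}_{\ell(w)-3}$.
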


\begin{theorem}\label{thm:singular} For a permutation~$w$ in $\mathfrak{S}_n$, the following are equivalent:
\begin{enumerate}
\item[$(1^{\prime\prime})$] $X_w$ is singular and of complexity one.
\item[$(2^{\prime\prime})$]  $w$ contains the pattern $3412$ exactly once and avoids the pattern $321$.
\item[$(3^{\prime\prime})$] There exists a reduced decomposition of $w$ containing $s_{i+1}s_{i}s_{i+2}s_{i+1}$ as a factor and no other repetitions.
\item[$(4^{\prime\prime})$]  $X_w$ is isomorphic to a generalized Bott--Samelson variety $Z_{(w_{1},\dots,w_{r})}$ such that $r=\ell(w)-3$, $w_{k}=s_{j+1}s_{j}s_{j+2}s_{j+1}$ for some $1\leq k \leq r$, $w_{i}=s_{j_{i}}$ for $i\neq k$, and $j_{1},\dots,j_{k-1},j_{k+1},\dots,j_{r},j,j+1,j+2$ are pairwise distinct.
\item[$(5^{\prime\prime})$] The Bruhat interval $[e,w]$ is isomorphic to $[e,3412]\times \mathfrak{B}_{\ell(w)-4}$. 
\item[$(6^{\prime\prime})$]  The Bruhat interval polytope $Q_{e,w}$ is combinatorially equivalent to the product of $Q_{e,3412}$ and the cube $I^{\ell(w)-4}$.
\end{enumerate}
\end{theorem}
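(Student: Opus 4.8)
The plan is to establish the cycle of implications $(1'') \Rightarrow (2'') \Rightarrow (3'') \Rightarrow (4'') \Rightarrow (1'')$ for the geometric side, and separately $(3'') \Rightarrow (5'') \Rightarrow (6'')$ (or a convenient detour through $(4'')$), relying heavily on the complexity-zero dictionary of Theorem~\ref{thm:1} as the ``base case''. The central bookkeeping device throughout is the combinatorial invariant that computes the complexity of $X_w$: by general torus-action theory the dimension of a maximal $\T$-orbit in $X_w$ equals the rank of the subgroup of the character lattice spanned by the $\T$-weights occurring along $X_w$, equivalently (using the moment-map/GKM picture) the dimension of the Bruhat interval polytope $Q_{e,w}$. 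So ``$X_w$ has complexity one'' translates to $\dim Q_{e,w} = \dim X_w - 1 = \ell(w)-1$, and I would begin by recording this equivalence carefully, since it is what links the geometric hypotheses in $(1'')$, $(1')$ to the polytope statements $(6'')$, $(6')$ and, via the edges of $Q_{e,w}$ being indexed by cover relations in $[e,w]$, to the interval statements $(5'')$, $(5')$.

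For $(1'')\Leftrightarrow(2'')$ and $(2'')\Leftrightarrow(3'')$ I would argue as follows. Smoothness of $X_w$ is governed by Lakshmibai--Sandhya: $X_w$ is singular iff $w$ contains $3412$ or $4231$. Combined with Theorem~\ref{thm:1}(2), a singular $X_w$ of complexity one must just barely fail to avoid $\{321,3412\}$; a short pattern-containment count — using that each occurrence of $321$ or $3412$ forces a repeated letter in any reduced word (the $s_i s_{i+1} s_i$, resp. $s_{i+1}s_i s_{i+2}s_{i+1}$, ``braid-type'' factor), and conversely one repeated letter accounts for exactly one such occurrence — shows the only way to have complexity exactly one while being singular is to contain $3412$ once and avoid $321$. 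This also excludes $4231$, because $4231 \supset 321$. That simultaneously gives $(3'')$: Tiskin/Tenner-style analysis of reduced words with a single repeated letter forces the word to contain precisely the factor $s_{i+1}s_i s_{i+2}s_{i+1}$ (the unique reduced word shape of $3412$ up to commutation) and to be otherwise multiplicity-free; conversely such a word yields, by Theorem~\ref{thm:1}(2)$\Leftrightarrow$(3) applied after deleting the $3412$-block, a permutation with exactly one $3412$ and no $321$.

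For $(3'')\Leftrightarrow(4'')$ I would use the standard identification of (generalized) Bott--Samelson varieties with iterated fiber bundles over $\overline{Bw_iB}/B$, the multiplication map $Z_{(w_1,\dots,w_r)}\to X_{w_1\cdots w_r}$, and the fact that this map is an isomorphism precisely when the product $w_1\cdots w_r$ is length-additive and each fiber collapses — which, over a factor $w_i=s_{j_i}$, is the complexity-zero situation of Theorem~\ref{thm:1}(4), and over the single factor $w_k = s_{j+1}s_j s_{j+2}s_{j+1} = 3412$-block, uses that $Z_{(w_k)}=X_{w_k}$ trivially since $w_k$ is a single permutation. The pairwise-distinctness condition on $j_1,\dots,\widehat{j_k},\dots,j_r,j,j+1,j+2$ is exactly what makes the product reduced and makes the bundle trivial away from the $3412$-block; I would verify it matches the ``no other repetitions'' clause of $(3'')$ letter by letter. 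Finally, for $(5'')$ and $(6'')$: the bundle/product decomposition of $(4'')$ induces $[e,w]\cong [e,3412]\times\mathfrak{B}_{\ell(w)-4}$ on Bruhat intervals (a repeated-letter-free ``tail'' contributes a Boolean factor by Theorem~\ref{thm:1}(5), the $3412$-block contributes $[e,3412]$), and passing to Bruhat interval polytopes — which respect such products, $Q_{(v_1,w_1)\times(v_2,w_2)} \cong Q_{v_1,w_1}\times Q_{v_2,w_2}$ — gives $(6'')$, with $I^{\ell(w)-4}$ the polytope of $\mathfrak{B}_{\ell(w)-4}$.

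The main obstacle I anticipate is the pattern-counting step in $(1'')\Rightarrow(2'')$: one has to show that a \emph{single} extra repetition in the reduced word cannot produce, say, two overlapping $3412$-occurrences, or a $3412$ together with a $4231$, while still keeping $\dim Q_{e,w}=\ell(w)-1$; this requires a careful local analysis of how braid/commutation moves on reduced words interact with pattern occurrences, and is where the $3412$-versus-$4231$ dichotomy (only $3412$ survives, since any $4231$ drags in a $321$ and hence a second repetition) has to be pinned down rigorously rather than by example.
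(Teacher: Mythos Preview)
Your overall strategy matches the paper's: cite Lakshmibai--Sandhya and Tenner for $(1'')\Leftrightarrow(2'')$, Daly for $(2'')\Leftrightarrow(3'')$, and use the multiplication map on generalized Bott--Samelson varieties for $(3'')\Rightarrow(4'')$. Two genuine gaps remain, both in the handling of $(5'')$ and $(6'')$.

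First, your implication $(5'')\Rightarrow(6'')$ rests on the assertion that ``Bruhat interval polytopes respect products'' of Bruhat intervals. This is false: the combinatorial type of $Q_{e,w}$ is \emph{not} determined by the poset $[e,w]$. The paper itself supplies a counterexample (the Remark after Proposition~\ref{vertices_of_BIP}): for $w=35412$ one has $[e,w]\cong[e,w^{-1}]$ as posets (inversion is an automorphism of Bruhat order), yet $Q_{e,35412}$ and $Q_{e,45132}$ have different $f$-vectors. So a poset factorization $[e,w]\cong[e,3412]\times\mathfrak{B}_{\ell-4}$ does not by itself yield a polytope factorization. The paper instead proves $(6'')$ directly from $(3'')$: Proposition~\ref{prop:product} shows that if $s_r\notin\supp(w)$ then $Q_{e,s_rw}$ and $Q_{e,ws_r}$ are each combinatorially $Q_{e,w}\times I$, and the proof is a nontrivial case analysis using the Tsukerman--Williams acyclic-graph criterion (Theorem~\ref{thm:face-graph}) for when a subinterval gives a face. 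Iterating this peels off the distinct simple reflections in $(3'')$ one at a time, leaving $Q_{e,s_{j+1}s_js_{j+2}s_{j+1}}\cong Q_{e,3412}$.

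Second, your scheme never returns from $(5'')$ or $(6'')$ to the cycle $(1'')$--$(4'')$, so those conditions are only shown to be necessary, not sufficient. The paper closes these loops in Proposition~\ref{thm:bruhat-interval} and Theorem~\ref{thm:poly-comb}: from $(5'')$ (resp.\ $(6'')$) one reads off $c(w)=1$ by counting atoms (resp.\ computing $\dim Q_{e,w}$), and then singularity is deduced by contraposition with the \emph{smooth} theorem, since $[e,3412]\times\mathfrak{B}_{\ell-4}\not\cong\mathfrak{S}_3\times\mathfrak{B}_{\ell-3}$ and $Q_{e,3412}\times I^{\ell-4}\not\cong\Perm_2\times I^{\ell-3}$. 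In other words, Theorems~\ref{thm:smooth} and~\ref{thm:singular} are proved in tandem; your proposal treats the singular case in isolation and so cannot extract singularity from the poset or polytope data alone.
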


The equivalence between the first two statements (respectively, the second and the third statements) in Theorems~\ref{thm:smooth} and~\ref{thm:singular} is an immediate consequence of~\cite{L-S1990} and~\cite{Tenner2012} (respectively,~\cite{Daly}). The following diagram shows how we prove the main theorems in the paper. We prove Theorems~\ref{thm:smooth} and~\ref{thm:singular} in parallel.

\begin{figure}[ht]
\centering
 \tikz{
\node[draw=gray!50,fill=gray!20] (a) at (0,3) [circle] {($1^{\prime}$)};
\node[draw=gray!50,fill=gray!20] (b) at (4,3) [circle] {($2^{\prime}$)};
\node[draw=gray!50,fill=gray!20] (c) at (8,3) [circle] {($3^{\prime}$)};
\node[draw=gray!50,fill=gray!20] (d) at (8,1) [circle] {($4^{\prime}$)};
\node[draw=gray!50,fill=gray!20] (e) at (4,1) [circle] {($5^{\prime}$)};
\node[draw=gray!50,fill=gray!20] (f) at (0,1) [circle] {($6^{\prime}$)};
\draw[<->,thick] (a) -- (b) node[midway,above] {Proposition~\ref{prop:complexity-pattern}}
node[midway, below] {(\cite{L-S1990,Tenner2012})};
\draw [<->,thick](c) -- (b) node[midway,above] {Theorem~\ref{thm:Daly}}
node[midway, below] {(\cite{Daly})};
\draw[->,thick]  (a) -- (d) node[midway, above, sloped] {Theorem~4.7};
\draw[<-,thick] (e) -- (d) node[midway, below] {Corollary~\ref{cor_Xw_BS_iso}};
\draw[->,thick] (e) -- (a) node[midway, below, sloped] {Proposition~5.2};
\draw[<->,thick] (a) -- (f) node[midway, left] {Theorem~\ref{thm:poly-comb}};
}
\end{figure}

Like as a Bott--Samelson variety is diffeomorphic to a Bott manifold having a higher rank torus action, a flag Bott--Samelson variety  is diffeomorphic to a \textit{flag Bott manifold} which admits a higher rank torus action. Whereas a Bott manifold is a toric variety, a flag Bott manifold is not a toric variety in general, but it becomes a GKM manifold. In addition, we will see that every smooth Schubert variety of complexity one is diffeomorphic to a flag Bott manifold.

This paper is organized as follows. Section~\ref{sec:preliminaries} contains basic notions and facts about symmetric groups, Schubert varieties, Bott--Samelson varieties and Bott towers. In Section~\ref{sec:pattern-complexity}, we introduce the various relation between the pattern avoidance of a permutation and the complexity of a Schubert variety, and see the equivalence among the first three statements in Theorems~\ref{thm:smooth} and~\ref{thm:singular}. In Section~\ref{sec:flag-bott-samelson}, we introduce the notions of flag Bott--Samelson varieties and generalized Bott--Samelson varieties, and prove the implications $(1^{\prime})\Rightarrow (4^{\prime})\Rightarrow (5^{\prime})$ and $(1^{\prime\prime})\Rightarrow (4^{\prime\prime})\Rightarrow (5^{\prime\prime})$ in Theorems~\ref{thm:smooth} and~\ref{thm:singular}, respectively. In Section~\ref{sec:moment polytopes}, we study the properties of Bruhat intervals and Bruhat interval polytopes related to Schubert varieties of complexity one, and then  complete proofs of Theorems~\ref{thm:smooth} and~\ref{thm:singular}. In Section~\ref{sec:flag-bott}, we introduce the notion of flag Bott manifolds, and then show that every smooth Schubert variety of complexity one is diffeomorphic to a flag Bott manifold.

\section{Preliminaries}\label{sec:preliminaries}

In this section, we first prepare basic facts about symmetric groups and Schubert varieties from~\cite{Brion2005}, and then see the relation among Schubert varieties, Bott--Samelson varieties, and Bott towers.

Let $G = \GL_n(\C)$ and $B$ the set of upper triangular matrices in $G$. We denote by $\T$ the set of diagonal matrices in $G$. Then, $\T \cong (\Cstar)^n$. 
The homogeneous space $G/B$ is a smooth projective variety can be identified with the set 
\[
\flag(\C^n) = \{ (\{0\} \subsetneq V_1 \subsetneq V_2 \subsetneq \cdots \subsetneq V_n = \C^n) \mid 
\dim_{\C} V_i = i \quad \text{ for }i =1,\dots,n\}
\]
of chains of subspaces of $\C^n$.
The Weyl group of $G$ is identified with the symmetric group $\mathfrak{S}_n$ on the set $[n] \coloneqq \{1,2,\dots,n\}$. For an element $w\in\mathfrak{S}_n$, we use the one-line notation 
\[
w=w(1)w(2)\cdots w(n).
\]
In this one-line notation, the identity element $e$ is presented by $e = 1 \ 2 \ \cdots \ n$.

We denote the set of transpositions by
\begin{equation}\label{eq:def_of_T}
T=\{(i,j)\mid 1\leq i < j\leq n\},
\end{equation}
which are permutations on $[n]$ swapping $i$ and $j$.
The \defi{simple transpositions} $s_i$ are the transpositions of the form
\[
s_i \coloneqq (i,i+1), \quad \text{ for }i=1,\ldots,n-1.
\]
Since $\mathfrak{S}_n$ is generated by simple transpositions, every $w\in\mathfrak{S}_n$ can be expressed as a product of simple transpositions. 
If $w=s_{i_1}\cdots s_{i_\ell}$ and $\ell$ is minimal among all such expressions, then $\ell$ is called the \defi{length} of~$w$ (written $\ell(w)=\ell$) and the expression $s_{i_1}\cdots s_{i_\ell}$ is called a \defi{reduced decomposition} (or \defi{reduced expression} or \defi{reduced word}) for~$w$. 
We denote by $R(w)$ the set consisting of all reduced decompositions of $w.$
A consecutive substring of a reduced decomposition is called a \defi{factor}. For instance, since
$$R(321)=\{s_{1}s_{2}s_{1},\,s_{2}s_{1}s_{2}\}\text{ and }R(3412)=\{s_{2}s_{3}s_{1}s_{2},\,s_{2}s_{1}s_{3}s_{2}\},$$ no reduced decomposition for $3412$ contains $s_{i}s_{i+1}s_{i}$ as a factor unlike $321$.

The Bruhat order on $\mathfrak{S}_n$ is defined by $v\leq w$  if a reduced decomposition of $v$ is a substring of some reduced decomposition of $w$. Then $\mathfrak{S}_n$ with the \defi{Bruhat order} is a graded poset, with rank function given by length. Figure~\ref{fig:bruhat_order} gives the Hasse diagram for the Bruhat order on $\mathfrak{S}_4$.

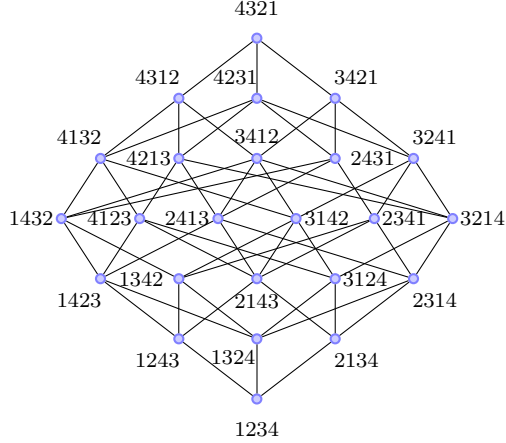
\begin{figure}[h!]
	\centering
	\begin{tikzpicture}[scale=.7]
	\tikzset{every node/.style={font=\footnotesize}}
	\matrix [matrix of math nodes,column sep={0.52cm,between origins},
	row sep={0.8cm,between origins},
	nodes={circle, draw=blue!50,fill=blue!20, thick, inner sep = 0pt , minimum size=1.2mm}]
	{
		& & & & & \node[label = {above:{4321}}] (4321) {} ; & & & & & \\
		& & &
		\node[label = {above left:4312}] (4312) {} ; & &
		\node[label = {above left:4231}] (4231) {} ; & &
		\node[label = {above right:3421}] (3421) {} ; & & & \\
		& \node[label = {above left:4132}] (4132) {} ; & &
		\node[label = {left:4213}] (4213) {} ; & &
		\node[label = {[label distance = -0.1cm] above:3412}] (3412) {} ; & &
		\node[label = {[label distance = 0.1cm]0:2431}] (2431) {} ; & &
		\node[label = {above right:3241}] (3241) {} ; & \\
		\node[label = {left:1432}] (1432) {} ; & &
		\node[label = {left:4123}] (4123) {} ; & &
		\node[label = {[label distance = 0.01cm]180:2413}] (2413) {} ; & &
		\node[label = {[label distance = 0.01cm]0:3142}] (3142) {} ; & &
		\node[label = {right:2341}] (2341) {} ; & &
		\node[label = {right:3214}] (3214) {} ; \\
		& \node[label = {below left:1423}] (1423) {} ; & &
		\node[label = {[label distance = 0.1cm]182:1342}] (1342) {} ; & &
		\node[label = {[label distance = -0.1cm] below:2143}] (2143) {} ; & &
		\node[label = {right:3124}] (3124) {} ; & &
		\node[label = {below right:2314}] (2314) {} ; & \\
		& & & \node[label = {below left:1243}] (1243) {} ; & &
		\node[label = {[label distance = 0.01cm]190:1324}] (1324) {} ; & &
		\node[label = {below right:2134}] (2134) {} ; & & & \\
		& & & & & \node[label = {below:1234}] (1234) {} ; & & & & & \\
	};		
	\draw (4321)--(4312)--(4132)--(1432)--(1423)--(1243)--(1234)--(2134)--(2314)--(2341)--(3241)--(3421)--(4321);
	\draw (4321)--(4231)--(4132);
	\draw (4231)--(3241);
	\draw (4231)--(2431);
	\draw (4231)--(4213);
	\draw (4312)--(4213)--(2413)--(2143)--(3142)--(3241);
	\draw (4312)--(3412)--(2413)--(1423)--(1324)--(1234);
	\draw (3421)--(3412)--(3214)--(3124)--(1324);
	\draw (3421)--(2431)--(2341)--(2143)--(2134);
	\draw (4132)--(4123)--(1423);
	\draw (4132)--(3142)--(3124)--(2134);
	\draw (4213)--(4123)--(2143)--(1243);
	\draw (4213)--(3214);
	\draw (3412)--(1432)--(1342)--(1243);
	\draw (2431)--(1432);
	\draw (2431)--(2413)--(2314);
	\draw (3142)--(1342)--(1324);
	\draw (4123)--(3124);
	\draw (2341)--(1342);
	\draw (2314)--(1324);
	\draw (3412)--(3142);
	\draw (3241)--(3214)--(2314);
	\end{tikzpicture}
	\caption{The Bruhat order on $\mathfrak{S}_4$.}\label{fig:bruhat_order}
\end{figure}

The complex torus $\T$ acts on $G/B$ by the left multiplication, and the set of $\T$-fixed points is identified with $\mathfrak{S}_n$.
More precisely, each element $w \in \mathfrak{S}_n$ corresponds to a coordinate flag given by
\[
(\{0\} \subsetneq \langle \mathbf{e}_{w(1)} \rangle \subsetneq \langle \mathbf{e}_{w(1)}, \mathbf{e}_{w(2)} \rangle \subsetneq \cdots \subsetneq V_n = \C^n)
\]
where $\mathbf{e}_1,\dots, \mathbf{e}_n$ are the standard basis vectors in $\C^n$. We denote by $wB$ this standard coordinate flag. It is well-known that 
 $\flag(\C^n)$ has a Bruhat decomposition 
$$\flag(\C^n)=\bigsqcup_{w\in \mathfrak{S}_n} BwB/B.$$
Moreover, the $B$-orbit $BwB/B$ is isomorphic to $\C^{\ell(w)}$ and called a \defi{Schubert cell}. The (Zariski) closure of $BwB/B$ is the \defi{Schubert variety} $X_w$, and  each Schubert variety decomposes into Schubert cells:
\begin{equation*}
X_w=\bigsqcup_{v\leq w} BvB/B.
\end{equation*}

Note that most Schubert varieties are singular, and they are desingularized using Bott--Samelson varieties. Let $w$ be a permutation and consider a decomposition $\underline{w}=({i_1},\dots, {i_\ell})$ of $w$ (not necessarily reduced). The \defi{Bott--Samelson variety} associated with $\underline{w}$, denoted $Z_{\underline{w}}$, is the quotient of $\overline{Bs_{i_1}B}\times\cdots\times \overline{Bs_{i_\ell}B}$ by the action of $B^\ell \coloneqq \underbrace{B \times \cdots \times B}_{\ell}$ given by:
\begin{equation}\label{eq_def_of_Theta_BS}
(b_1,\dots,b_\ell)\cdot (p_1,\dots,p_\ell)
	\coloneqq (p_1b_1, b_1^{-1}p_2b_2,\dots,b_{\ell-1}^{-1}p_\ell b_\ell)
\end{equation}
for $(b_1,\dots,b_{\ell}) \in B^{\ell}$ and $(p_1,\dots,p_{\ell}) \in \prod_{k=1}^{\ell} \overline{B s_{i_k} B}$.
Then $Z_{\underline{w}}$ is smooth and it has an iterated $\C P^1$-bundle structure because $\overline{B s_{i_k} B}/B \cong \C P^1$. 
Moreover, the left multiplication of $B$ on $\overline{B s_{i_1} B}$ induces an action of $B$ on $Z_{\underline{w}}$ and we have a 
$B$-equivariant map
$$p_{\underline{w}} \colon Z_{\underline{w}}\to G/B$$
defined by $p_{\underline{w}}(p_1,\dots,p_\ell)=p_1\cdots p_\ell B$. If $\underline{w}$ is a reduced decomposition of $w$, then $p_{\underline{w}}$ gives a resolution of singularities for $X_w$. See~\cite{bo-sa,De74} for details.

A Bott--Samelson variety $Z_{\underline{w}}$ is not a toric variety in general, but it is diffeomorphic to a toric variety, called a Bott manifold.
\begin{definition}\cite{Gr-Ka}\label{def:bott}
A \defi{Bott tower} is an iterated $\C P^1$-bundle:
\begin{equation*}
\begin{tikzcd}[row sep = 0.2cm]
\mathcal{B}_{\ell} \arrow[r, "\pi_{\ell}"] \arrow[d, equal]&  
\mathcal{B}_{\ell-1} \arrow[r, "\pi_{\ell-1}"] &
\cdots \arrow[r, "\pi_2"] &
\mathcal{B}_1 \arrow[r, "\pi_1"] \arrow[d, equal]&
\mathcal{B}_0, \arrow[d, equal]\\
P(\underline{\C}\oplus \xi_{\ell}) & & & \C P^1 & \{\text{a point}\}
\end{tikzcd} 
\end{equation*}
 where each $\mathcal{B}_{k}$ is the complex projectivization of the Whitney sum of a holomorphic line bundle $\xi_{k}$ over $\mathcal{B}_{k-1}$ and the trivial bundle $\underline{\C}$.
Each $\mathcal{B}_{k}$ is called a \defi{Bott manifold} (of height $k$).
\end{definition}
Let $\gamma_j$ be the tautological line bundle over $\mathcal{B}_j$ and $\gamma_{j,i}$ the pullback of $\gamma_j$ by the projection $\pi_i\circ \cdots \circ\pi_{j+1}\colon \mathcal{B}_{i}\to \mathcal{B}_j$ for $i>j$. For convenience, we define $\gamma_{j,j}=\gamma_j$. Then for each $k=2,\ldots,\ell$, there exist $a_{j,k} \in \Z$ for $1 \leq j< k$ such that
\[
\xi_{k}=\bigotimes_{1\leq j < k}\gamma_{j,k-1}^{\otimes a_{j,k}}
\] 
in Definition~\ref{def:bott} since the Picard number of $\mathcal{B}_k$ is $k$ (cf.~\cite[Exercise~II.7.9]{Hartshorne}). Each Bott tower is  determined by the list of the integers $a_{j,k}$ ($1\leq j< k\leq \ell$), and we visualize them as an upper triangular matrix
\begin{equation*}
\begin{pmatrix}
 0& a_{1,2} & a_{1,3} & \dots  & a_{1,\ell}\\
  &   0& a_{2,3}& \dots & a_{2,\ell} \\
  &  &  \ddots& \ddots & \vdots  \\
  &   &     & 0 &  a_{\ell-1,\ell} \\
  &   &     &   & 0   \\
\end{pmatrix}.
\end{equation*}
Note that each Bott manifold $\mathcal{B}_\ell$ is a  smooth projective toric variety whose fan is determined by the above matrix. The moment polytope of $\mathcal{B}_\ell$ is combinatorially equivalent to the $\ell$-dimensional cube $I^\ell$.

\begin{theorem}[{\cite[Proposition~3.10]{Gr-Ka}}]\label{thm:Gr-Ka}
For $w\in \mathfrak{S}_n$, let $\underline{w}=({i_1},{i_2},\dots, {i_\ell})$ be a reduced decomposition. Then
the Bott--Samelson variety $Z_{\underline{w}}$ is diffeomorphic to a Bott manifold $\mathcal{B}_\ell$ determined by the integers 
\begin{equation}\label{eq:bott-integers}
a_{j,k}=\langle \mathbf{e}_{i_j}-\mathbf{e}_{i_j+1}, \mathbf{e}_{i_k}-\mathbf{e}_{i_k+1} \rangle
\end{equation} for $1\leq j<k\leq \ell$. Here, $\mathbf e_1,\dots,\mathbf e_{n+1}$ are the standard basis vectors in $\R^{n+1}$ and $\langle \cdot, \cdot \rangle$ is the standard inner product in $\R^{n+1}$.
\end{theorem}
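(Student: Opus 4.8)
The plan is to prove Theorem~\ref{thm:Gr-Ka} by induction on $\ell$, showing at each stage that the $\C P^1$-fibration which forgets the last letter of $\underline{w}=({i_1},\dots,{i_\ell})$ is the projectivization of a rank-two bundle that is \emph{topologically} a direct sum of two line bundles; the statement then reduces to a single first Chern class computation, in which the integers $\langle\mathbf{e}_{i_j}-\mathbf{e}_{i_j+1},\mathbf{e}_{i_k}-\mathbf{e}_{i_k+1}\rangle$ appear as the pairings between the tautological line bundles of $\flag(\C^n)$ and Schubert curves.

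First I would set up the inductive step. For each $i$, let $\flag^{(i)}\to\flag(\C^n)$ be the $\C P^1$-bundle whose fibre over a flag $(V_\bullet)$ consists of all flags agreeing with $(V_\bullet)$ away from position $i$; it is the projectivization $\P(\mathcal{V}_{i+1}/\mathcal{V}_{i-1})$ of the rank-two bundle $\mathcal{V}_{i+1}/\mathcal{V}_{i-1}$, where $\mathcal{V}_1\subset\cdots\subset\mathcal{V}_n=\underline{\C^n}$ is the tautological flag of vector bundles on $\flag(\C^n)=G/B$. A direct check from~\eqref{eq_def_of_Theta_BS} shows that each stage $Z_{(i_1,\dots,i_k)}\to Z_{(i_1,\dots,i_{k-1})}$ of $Z_{\underline{w}}$ is the pullback of $\flag^{(i_k)}$ along the Bott--Samelson map $p_{(i_1,\dots,i_{k-1})}\colon Z_{(i_1,\dots,i_{k-1})}\to G/B$. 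Now $\mathcal{V}_{i+1}/\mathcal{V}_{i-1}$ carries the tautological short exact sequence $0\to\mathcal{L}_i\to\mathcal{V}_{i+1}/\mathcal{V}_{i-1}\to\mathcal{L}_{i+1}\to 0$, where $\mathcal{L}_j\coloneqq\mathcal{V}_j/\mathcal{V}_{j-1}$, and this splits as topological complex vector bundles; hence $\flag^{(i)}\cong\P(\mathcal{L}_i\oplus\mathcal{L}_{i+1})\cong\P(\underline{\C}\oplus(\mathcal{L}_{i+1}\otimes\mathcal{L}_i^{-1}))$ as smooth $\C P^1$-bundles over $\flag(\C^n)$. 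Feeding this into the inductive hypothesis, $Z_{\underline{w}}$ becomes diffeomorphic to the iterated $\C P^1$-bundle whose $k$-th stage is $\P(\underline{\C}\oplus\xi_k)$ with $\xi_k=p_{(i_1,\dots,i_{k-1})}^{\ast}(\mathcal{L}_{i_k+1}\otimes\mathcal{L}_{i_k}^{-1})$, that is, to a Bott tower as in Definition~\ref{def:bott}.

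It then remains to identify $c_1(\xi_\ell)$ in the basis $c_1(\gamma_{1,\ell-1}),\dots,c_1(\gamma_{\ell-1,\ell-1})$ of $H^2(\mathcal{B}_{\ell-1})$. I would use the curve classes $C_j\subset Z_{(i_1,\dots,i_{\ell-1})}$ obtained by taking a fibre $\C P^1$ of the $j$-th stage and pushing it forward along the sections filling the remaining letters by the identity: these sections correspond to the $\underline{\C}$-sections of the successive $\C P^1$-bundles, so $\{[C_j]\}_{j<\ell}$ is, up to sign, the basis of $H_2(\mathcal{B}_{\ell-1})$ dual to $\{c_1(\gamma_{j,\ell-1})\}_{j<\ell}$, while $(p_{(i_1,\dots,i_{\ell-1})})_{\ast}[C_j]=[\,\overline{Bs_{i_j}B}/B\,]=[X_{s_{i_j}}]\in H_2(G/B)$. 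Pairing, the coefficient of $c_1(\gamma_{j,\ell-1})$ in $c_1(\xi_\ell)$ equals, up to an overall sign,
\[
\bigl\langle c_1(\mathcal{L}_{i_\ell+1})-c_1(\mathcal{L}_{i_\ell}),\,[X_{s_{i_j}}]\bigr\rangle=\langle\mathbf{e}_{i_j}-\mathbf{e}_{i_j+1},\,\mathbf{e}_{i_\ell}-\mathbf{e}_{i_\ell+1}\rangle,
\]
the last equality being the elementary computation $\mathcal{L}_i|_{X_{s_i}}\cong\mathcal{O}(-1)$, $\mathcal{L}_{i+1}|_{X_{s_i}}\cong\mathcal{O}(1)$, and $\mathcal{L}_m|_{X_{s_i}}$ trivial for $m\notin\{i,i+1\}$. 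This matches the matrix prescribed by~\eqref{eq:bott-integers} and completes the induction.

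The hard part is the bookkeeping in this last step, not any single new idea: through the induction one must keep compatible identifications of $\mathrm{Pic}$ (equivalently $H_2$) of the Bott--Samelson variety and of the Bott tower, so that the sections filling the remaining letters by the identity correspond to the $\underline{\C}$-sections of the successive $\C P^1$-bundles and the curves $C_j$ to the duals of the $\gamma_{j,\ell-1}$, and then reconcile the formula for $p_{(i_1,\dots,i_{\ell-1})}^{\ast}$ on Picard groups with the recursion $\xi_k=\bigotimes_{j<k}\gamma_{j,k-1}^{\otimes a_{j,k}}$ defining a Bott tower. This stays routine because every short exact sequence of topological complex vector bundles splits, and because $\P(E)\cong\P(E^{\vee})$ and passing to the conjugate complex structure (which does not change the underlying smooth manifold) render all the sign choices invisible to the diffeomorphism type; the genuine content is then the displayed identity, in which $\langle\mathbf{e}_{i_j}-\mathbf{e}_{i_j+1},\mathbf{e}_{i_k}-\mathbf{e}_{i_k+1}\rangle$ is just the Cartan integer of type $A_{n-1}$.
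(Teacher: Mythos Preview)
The paper does not prove this theorem; it is quoted as \cite[Proposition~3.10]{Gr-Ka} and used as a black box. There is therefore no argument in the paper to compare your proposal against.

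For what it is worth, your outline is a correct route to the result, and it differs from Grossberg--Karshon's own argument: they construct an explicit one-parameter family of complex structures on $Z_{\underline w}$ degenerating to the Bott tower, which produces the diffeomorphism and the integers $a_{j,k}$ simultaneously, whereas you split each rank-two bundle topologically and then identify the resulting line bundles cohomologically. The one point in your argument that is more than bookkeeping is the claim that the ``identity'' curves $C_j$ on the Bott--Samelson side represent, under the inductively built diffeomorphism, the homology classes dual to the $c_1(\gamma_{j,\ell-1})$. For this you need that the pulled-back $\gamma_{k-1}$ is trivial on the identity section of each stage; equivalently, that under the splitting $\mathcal V_{i+1}/\mathcal V_{i-1}\cong\mathcal L_i\oplus\mathcal L_{i+1}$ followed by the twist $\P(\mathcal L_i\oplus\mathcal L_{i+1})\cong\P(\underline{\C}\oplus\mathcal L_{i+1}\otimes\mathcal L_i^{-1})$, the $\mathcal L_i$-section (which is the identity section on the Bott--Samelson side) goes to the $\underline{\C}$-section. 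This holds because the tautological bundles of the two projectivizations differ exactly by $\pi^{\ast}\mathcal L_i$, and $\mathcal O(-1)$ restricted to the $\mathcal L_i$-section is $\mathcal L_i$ itself; once noted, your displayed Chern-class identity does pin down the $a_{j,k}$ as claimed.
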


Note that most Schubert varieties are neither smooth nor toric. However toric Schubert varieties are smooth and they are Bott manifolds.
For a toric Schubert variety $X_w$, every reduced decomposition of $w$ consists of distinct letters, and hence for the associated Bott manifold the integers $a_{j,k}$ in \eqref{eq:bott-integers} are either $0$ or $-1$.

\section{Pattern avoidance and the complexity}\label{sec:pattern-complexity}
In this section, we define the complexity of a Schubert variety using the notion of complexity of a torus action, and see the relation between the complexity of a Schubert variety and patterns of a permutation. We also show the equivalence among the first three statements in Theorems~\ref{thm:smooth} and~\ref{thm:singular}.

Let $X$ be a smooth complex projective algebraic variety having an action of algebraic torus $\T=(\C^\ast)^n$. When the maximal $\T$-orbit in $X$ has codimension~$k$, we call the number $k$ the complexity of the action.

Every Schubert variety $X_w$ is a $\T$-invariant irreducible subvariety of $\flag(\C^n)$.  Note that the $\T$-fixed point set of $X_w$ is the set of coordinate flags $uB$ for $u\leq w$. That is, there is a bijection between $(X_w)^{\T}$ and the Bruhat interval $$[e,w]\coloneqq\{v\in\mathfrak{S}_n\mid v\leq w\}.\footnote{For $v,w$ in $\mathfrak{S}_{n}$ with $v<w$ in Bruhat order, the Bruhat interval $[v,w]$ is the subposet of $(\mathfrak{S}_{n},<)$ consisting of all the permutations $u$ with $v\leq u\leq w$, and the Bruhat interval $[e,w]$ is also known as the principal order ideal of $w$, see~\cite{Tenner2007}.}$$ Using the Pl\"{u}cker embedding, we get a moment map $\mu\colon \flag(\C^n) \to \R^n$ which sends 
\[
uB\mapsto (u^{-1}(1),\ldots,u^{-1}(n)),
\] 
and the image $\mu(\flag(\C^n))$ is a simple convex polytope
\[
\Perm_{n-1} \coloneqq \Conv \{(w(1),\dots,w(n)) \in \R^n \mid w \in \mathfrak{S}_n \},
\]
called the \defi{permutohedron}. 

The notion of Bruhat interval polytopes was introduced in~\cite{ts-wi} as a generalization of the notion of permutohedra. For two elements $v$ and $w$ in $\mathfrak{S}_n$ with $v\leq w$ in Bruhat order, the Bruhat interval polytope~$Q_{v,w}$ is the convex polytope  given by the convex hull of the points $(u(1),\dots,u(n))\in\R^n$ for $v\leq u\leq w$. Then
for a Schubert variety $X_w$, the moment map image
$\mu(X_w)$ becomes the Bruhat interval polytope 
\[
Q_{e,w^{-1}} \coloneqq \Conv\{(u(1),\ldots,u(n)) \in \R^n  \mid u\leq w^{-1} \},
\]
and moreover, the images $\mu(uB)$ for all $u\leq w$ are the vertices of the Bruhat interval polytope $Q_{e,w^{-1}}$. 
\begin{proposition}[{cf.~\cite[Proposition~6.20]{ts-wi} and references therein}]\label{vertices_of_BIP}
	The vertices of $Q_{e, w^{-1}}$ are the points $(u(1),\ldots,u(n)) \in \R^n$ for all $u\leq w^{-1}$.
\end{proposition}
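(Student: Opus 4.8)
The plan is to prove the two inclusions. One direction is free: by the very definition of the Bruhat interval polytope, $Q_{e,w^{-1}} = \Conv\{(u(1),\dots,u(n)) \mid u \le w^{-1}\}$, so every vertex of $Q_{e,w^{-1}}$ must be one of the points $(u(1),\dots,u(n))$ with $u \le w^{-1}$. The substance of the proposition is the reverse inclusion: each of these points is genuinely a vertex, i.e.\ an extreme point of the polytope.

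To get this I would produce, for each $u \le w^{-1}$, a linear functional that singles out $(u(1),\dots,u(n))$. The natural candidate is $f_u(x) \coloneqq \sum_{i=1}^{n} u(i)\, x_i$ on $\R^n$. For any $v \in \mathfrak{S}_n$ one computes $f_u(v(1),\dots,v(n)) = \sum_{i=1}^{n} u(i)v(i)$, and by the (strict) rearrangement inequality --- which applies because the entries $u(1),\dots,u(n)$ are pairwise distinct --- this quantity is strictly smaller than $\sum_{i=1}^{n} u(i)^2 = f_u(u(1),\dots,u(n))$ unless $v = u$. Hence among the generating points of $Q_{e,w^{-1}}$ the functional $f_u$ is maximized uniquely at $(u(1),\dots,u(n))$; since a linear functional on a polytope attains its maximum on the convex hull of any generating set at a generating point, and only at convex combinations of the maximizing generators, $f_u$ is in fact uniquely maximized over all of $Q_{e,w^{-1}}$ at $(u(1),\dots,u(n))$. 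Being the unique maximizer of a linear functional, that point is a vertex. Equivalently, one may phrase this as: the permutohedron $\Perm_{n-1}$ has all $n!$ points $(\sigma(1),\dots,\sigma(n))$ as vertices, by exactly the same computation, and the convex hull of any subset of the vertices of a polytope has precisely those points as its vertices.

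The computation itself is a one-line inequality, so I expect no real obstacle there; the only point that needs care is the bookkeeping with inverses, namely keeping straight that the $\T$-fixed points of $X_w$ are the flags $uB$ with $u \le w$, that their moment-map images are $(u^{-1}(1),\dots,u^{-1}(n))$, and that --- since $u \mapsto u^{-1}$ is an order-automorphism of the Bruhat order --- this set of images coincides with $\{(v(1),\dots,v(n)) \mid v \le w^{-1}\}$, the defining set of vertices of $Q_{e,w^{-1}}$. Making that translation explicit is what lets one read the proposition as the statement that the moment images of the $\T$-fixed points of $X_w$ are exactly the vertices of $\mu(X_w) = Q_{e,w^{-1}}$.
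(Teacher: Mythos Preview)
Your argument is correct. The rearrangement inequality shows that the linear functional $f_u(x)=\sum_i u(i)x_i$ is uniquely maximized over the generating set at $(u(1),\dots,u(n))$, which forces that point to be a vertex; the alternative phrasing via ``a subset of the vertex set of $\Perm_{n-1}$'' is equally valid, since an extreme point of a polytope remains extreme in the convex hull of any subset of vertices containing it. Your bookkeeping with inverses is also right.

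As for comparison: the paper does not supply its own proof of this proposition. It is stated with the attribution ``cf.\ \cite[Proposition~6.20]{ts-wi} and references therein'' and then used as a known fact. So there is no argument in the paper to compare against; you have simply filled in the standard elementary proof that the cited reference would provide.
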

We refer the readers to~\cite{LMP1} for more details on moment maps and the correspondence between Schubert varieties and Bruhat interval polytopes. 

\begin{remark}[{cf.~\cite[Remark~4.4]{LMP1}}]
For every $w\in \mathfrak{S}_{n}$ with $n\leq 4$, $Q_{v,w}$ and $Q_{v^{-1},w^{-1}}$ are combinatorially equivalent. However, 
for $w=35412$ in $\mathfrak{S}_{5}$, the Bruhat interval polytopes $Q_{e,w}$ and $Q_{e,w^{-1}}$ are not combinatorially equivalent. In fact, their $f$-vectors are different:
\begin{equation*}
f(Q_{e,35412})=(60,123,82,19,1)\text{ and } f(Q_{e,45132})=(60,122,81,19,1),
\end{equation*}
so $Q_{e,w}$ has one more edge than $Q_{e,w^{-1}}$. These vectors are computed by a computer program SageMath.
\end{remark}

Note that the complex dimension of a maximal $\mathbb{T}$-orbit in $X_w$ is the same as the real dimension of the moment map image $\mu(X_w)$. Hence we can define the \defi{complexity} of $X_w$ as follows:
$$c(w)=\dim_\C X_w - \dim_\R Q_{e,w^{-1}}=\ell(w)-\dim_\R Q_{e,w^{-1}}.$$
See~\cite[Section~6]{LMP1} for more details.
For example, $c(3142)=3-3=0$ and $c(4132)=4-3=1$, see Figure~\ref{fig:moment_map_images}.

\begin{figure}[hbt]
\begin{subfigure}[c]{0.49\textwidth}
	\begin{tikzpicture}[scale=6]
		\tikzset{every node/.style={draw=blue!50,fill=blue!20, circle, thick, inner sep=1pt,font=\footnotesize}}
\tikzset{red node/.style = {fill=red!20!white, draw=red!75!white}}
\tikzset{red line/.style = {line width=0.3ex, red, nearly opaque}}
	
	\coordinate (3142) at (1/3, 1/2, 1/6); 
	\coordinate (4231) at (2/3, 1/2, 1/6); 
	\coordinate (4312) at (5/6, 2/3, 1/2); 
	\coordinate (4321) at (5/6, 1/2, 1/3); 
	\coordinate (3421) at (5/6, 1/3, 1/2); 
	\coordinate (4213) at (2/3, 5/6, 1/2); 
	\coordinate (1324) at (1/3, 1/2, 5/6); 
	\coordinate (2413) at (2/3, 1/2, 5/6); 
	\coordinate (3412) at (5/6, 1/2, 2/3); 
	\coordinate (2314) at (1/2, 2/3, 5/6); 
	\coordinate (4123) at (1/2, 5/6, 1/3); 
	\coordinate (4132) at (1/2, 2/3, 1/6); 
	\coordinate (3214) at (1/2, 5/6, 2/3); 
	\coordinate (3124) at (1/3, 5/6, 1/2); 
	\coordinate (2431) at (2/3, 1/6, 1/2); 
	\coordinate (1432) at (1/2, 1/6, 2/3); 
	\coordinate (1423)  at (1/2, 1/3, 5/6); 
	\coordinate (1342)  at (1/3, 1/6, 1/2); 
	\coordinate (2341) at (1/2, 1/6, 1/3); 
	\coordinate (3241) at (1/2, 1/3, 1/6);
	\coordinate (1243) at (1/6, 1/3, 1/2); 
	\coordinate (2143) at (1/6, 1/2, 1/3);  
	\coordinate (1234) at (1/6, 1/2, 2/3); 
	\coordinate (2134) at (1/6, 2/3, 1/2); 
	
	\draw[thick, draw=blue!70] (4213)--(4312)--(3412)--(2413)--(2314)--(3214)--cycle;
	\draw[thick, draw=blue!70] (4312)--(4321)--(3421)--(3412);
	\draw[thick, draw=blue!70] (3421)--(2431)--(1432)--(1423)--(2413);
	\draw[thick, draw=blue!70] (1423)--(1324)--(2314);
	\draw[thick, draw=blue!70] (1432)--(1342)--(1243)--(1234)--(1324);
	\draw[thick, draw=blue!70] (1234)--(2134)--(3124)--(3214);
	\draw[thick, draw=blue!70] (3124)--(4123)--(4213);
	
	\draw[thick, draw=blue!70, dashed] (2134)--(2143)--(3142)--(4132)--(4123);
	\draw[thick, draw=blue!70, dashed] (2143)--(1243);
	\draw[thick, draw=blue!70, dashed] (3142)--(3241)--(2341)--(1342);
	\draw[thick, draw=blue!70, dashed] (2341)--(2431);
	\draw[thick, draw=blue!70, dashed] (3241)--(4231)--(4132);
	\draw[thick, draw=blue!70, dashed] (4231)--(4321);
	
	\draw[red line] (2134)--(1234)--(1324)--(2314)--cycle;
	\draw[red line] (1234)--(1243)--(1423)--(2413)--(2314);
	\draw[red line] (1324)--(1423);
	\draw[red line, dashed] (1243)--(2143)--(2413);
	\draw[red line, dashed] (2143)--(2134);
	
\node [label = {[label distance = 0cm]left:1234}, red node] at (1234) {};
\node[label = {[label distance = 0cm]left:1243}, red node] at (1243) {};
\node[label = {[label distance = 0cm]right:1324}, red node] at (1324) {};
\node[label = {[label distance = 0cm]left:1342}] at (1342) {};
\node [label = {[label distance = 0cm]above:1423},red node] at (1423) {};
\node[label = {[label distance = -0.2cm]below:1432}] at (1432) {};
\node [label = {[label distance = 0cm]left:2134}, red node] at (2134) {};
\node[label = {[label distance = -0.1cm]above right:2143}, red node] at (2143) {};
\node[label = {[label distance = 0cm]right:2314}, red node] at (2314) {};
\node[label = {[label distance = -0.1cm]below left:2341}] at (2341) {};
\node[label = {[label distance = -0.1cm]above :2413}, red node] at (2413) {};
\node[label = {[label distance = -0.2cm]below:2431}] at (2431) {};
\node[label = {[label distance = -0.2cm]above:3124}] at (3124) {};
\node[label = {[label distance = -0.2cm]above:3142}] at (3142) {};
\node[label = {[label distance = -0.2cm]above:3214}] at (3214) {};
\node [label = {[label distance = -0.1cm]above:3241}] at (3241) {};
\node[label = {[label distance = 0cm]below left:3412}] at (3412) {};
\node[label = {[label distance = 0cm]right:3421}] at (3421) {};
\node[label = {[label distance = -0.2cm]above:4123}] at (4123) {};
\node [label = {[label distance = 0cm]below:4132}] at (4132) {};
\node[label = {[label distance = 0cm]right:4213}] at (4213) {};
\node[label = {[label distance = 0cm]left:4231}] at (4231) {};
\node[label = {[label distance = 0cm]right:4312}] at (4312) {};
\node [label = {[label distance = 0cm]right:4321}] at (4321) {};

	\end{tikzpicture}
	\caption{$Q_{e, 3142^{-1}} = Q_{e, 2413}$.}
\end{subfigure}~\begin{subfigure}[c]{0.49\textwidth}
	\begin{tikzpicture}[scale=6]
		\tikzset{every node/.style={draw=blue!50,fill=blue!20, circle, thick, inner sep=1pt,font=\footnotesize}}
\tikzset{red node/.style = {fill=red!20!white, draw=red!75!white}}
\tikzset{red line/.style = {line width=0.3ex, red, nearly opaque}}
	
	\coordinate (3142) at (1/3, 1/2, 1/6); 
	\coordinate (4231) at (2/3, 1/2, 1/6); 
	\coordinate (4312) at (5/6, 2/3, 1/2); 
	\coordinate (4321) at (5/6, 1/2, 1/3); 
	\coordinate (3421) at (5/6, 1/3, 1/2); 
	\coordinate (4213) at (2/3, 5/6, 1/2); 
	\coordinate (1324) at (1/3, 1/2, 5/6); 
	\coordinate (2413) at (2/3, 1/2, 5/6); 
	\coordinate (3412) at (5/6, 1/2, 2/3); 
	\coordinate (2314) at (1/2, 2/3, 5/6); 
	\coordinate (4123) at (1/2, 5/6, 1/3); 
	\coordinate (4132) at (1/2, 2/3, 1/6); 
	\coordinate (3214) at (1/2, 5/6, 2/3); 
	\coordinate (3124) at (1/3, 5/6, 1/2); 
	\coordinate (2431) at (2/3, 1/6, 1/2); 
	\coordinate (1432) at (1/2, 1/6, 2/3); 
	\coordinate (1423)  at (1/2, 1/3, 5/6); 
	\coordinate (1342)  at (1/3, 1/6, 1/2); 
	\coordinate (2341) at (1/2, 1/6, 1/3); 
	\coordinate (3241) at (1/2, 1/3, 1/6);
	\coordinate (1243) at (1/6, 1/3, 1/2); 
	\coordinate (2143) at (1/6, 1/2, 1/3);  
	\coordinate (1234) at (1/6, 1/2, 2/3); 
	\coordinate (2134) at (1/6, 2/3, 1/2); 
	
	\draw[thick, draw=blue!70] (4213)--(4312)--(3412)--(2413)--(2314)--(3214)--cycle;
	\draw[thick, draw=blue!70] (4312)--(4321)--(3421)--(3412);
	\draw[thick, draw=blue!70] (3421)--(2431)--(1432)--(1423)--(2413);
	\draw[thick, draw=blue!70] (1423)--(1324)--(2314);
	\draw[thick, draw=blue!70] (1432)--(1342)--(1243)--(1234)--(1324);
	\draw[thick, draw=blue!70] (1234)--(2134)--(3124)--(3214);
	\draw[thick, draw=blue!70] (3124)--(4123)--(4213);
	
	\draw[thick, draw=blue!70, dashed] (2134)--(2143)--(3142)--(4132)--(4123);
	\draw[thick, draw=blue!70, dashed] (2143)--(1243);
	\draw[thick, draw=blue!70, dashed] (3142)--(3241)--(2341)--(1342);
	\draw[thick, draw=blue!70, dashed] (2341)--(2431);
	\draw[thick, draw=blue!70, dashed] (3241)--(4231)--(4132);
	\draw[thick, draw=blue!70, dashed] (4231)--(4321);
	
	\draw[red line] (2134)--(1234)--(1324)--(2314)--cycle;
	\draw[red line] (1234)--(1243)--(1342)--(1432)--(1423)--(1324);
	\draw[red line] (1432)--(2431)--(2413)--(2314);
	\draw[red line] (1423)--(2413);
	\draw[red line, dashed] (1243)--(2143)--(2134);
	\draw[red line, dashed] (2143)--(2341)--(2431);
	\draw[red line, dashed] (2341)--(1342);
	
\node [label = {[label distance = 0cm]left:1234}, red node] at (1234) {};
\node[label = {[label distance = 0cm]left:1243}, red node] at (1243) {};
\node[label = {[label distance = 0cm]right:1324}, red node] at (1324) {};
\node[label = {[label distance = 0cm]left:1342}, red node] at (1342) {};
\node [label = {[label distance = 0cm]above:1423},red node] at (1423) {};
\node[label = {[label distance = -0.2cm]below:1432}, red node] at (1432) {};
\node [label = {[label distance = 0cm]left:2134}, red node] at (2134) {};
\node[label = {[label distance = -0.1cm]above right:2143}, red node] at (2143) {};
\node[label = {[label distance = 0cm]right:2314}, red node] at (2314) {};
\node[label = {[label distance = -0.1cm]below left:2341}, red node] at (2341) {};
\node[label = {[label distance = -0.1cm]above :2413}, red node] at (2413) {};
\node[label = {[label distance = -0.2cm]below:2431}, red node] at (2431) {};
\node[label = {[label distance = -0.2cm]above:3124}] at (3124) {};
\node[label = {[label distance = -0.2cm]above:3142}] at (3142) {};
\node[label = {[label distance = -0.2cm]above:3214}] at (3214) {};
\node [label = {[label distance = -0.1cm]above:3241}] at (3241) {};
\node[label = {[label distance = 0cm]below left:3412}] at (3412) {};
\node[label = {[label distance = 0cm]right:3421}] at (3421) {};
\node[label = {[label distance = -0.2cm]above:4123}] at (4123) {};
\node [label = {[label distance = 0cm]below:4132}] at (4132) {};
\node[label = {[label distance = 0cm]right:4213}] at (4213) {};
\node[label = {[label distance = 0cm]left:4231}] at (4231) {};
\node[label = {[label distance = 0cm]right:4312}] at (4312) {};
\node [label = {[label distance = 0cm]right:4321}] at (4321) {};
	\end{tikzpicture}
	\caption{$Q_{e, 4132^{-1}} = Q_{e, 2431}$.}
\end{subfigure}
\caption{Moment map images of $X_{3142}$ and $X_{4132}$.}\label{fig:moment_map_images}
\end{figure}
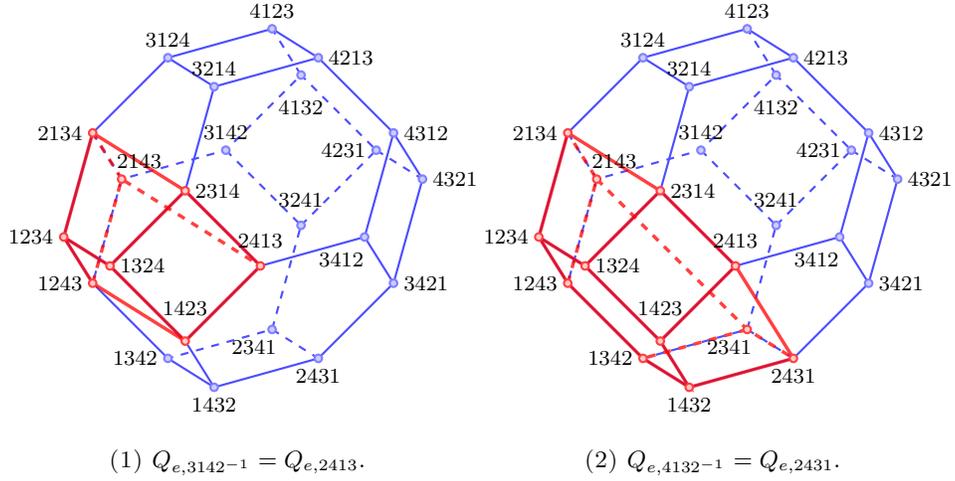

For a permutation $w\in \mathfrak{S}_n$, the dimension of the polytope $Q_{e,w}$ is related to a reduced decomposition of~$w$.
The \defi{support} of $w$ is the set  of distinct letters appearing in a reduced decomposition of $w$, and we denote it by $\supp(w)$. In fact, $\supp(w)$ is the same as the set of atoms in the Bruhat interval~$[e,w]$. 

It follows from~\cite[Corollary~7.13]{le-ma20} that the dimension of a Bruhat interval polytope~$Q_{e,w}$ is determined by the number of edges emanating from the vertex $(1,2,\dots,n)$, and moreover, by~\cite[Remark~7.5(5)]{le-ma20}, that number is the same as the cardinality of $\supp(w)$. Indeed, the vertices connecting to the vertex $(1,2,\dots,n)$ by an edge  are presented by 
\[
\begin{tikzcd}[row sep = 0cm, column sep = -0.2cm]
(1,2,\dots,i-1, & i+1,&i,&i+2,\dots,n)\\
&i\text{th}&&
\end{tikzcd}
\] for all $s_i\in \supp(w)$. Therefore, the complexity of $X_w$ is determined by the support of~$w$:
$$c(w)=\ell(w)-|\supp(w)|.$$
It means that $c(w)$ equals the number of repeated letters in a reduced decomposition of~$w$. In~\cite{Tenner2012}, Tenner denoted this quantity by~$\rep(w)$ and found a relation with patterns in $w$.

\begin{definition}
For $w\in\mathfrak{S}_n$ and $p\in\mathfrak{S}_k$ with~$k\leq n$, we say that the permutation~$w$ \defi{contains the pattern~$p$} if there exists a sequence $1\leq i_1<\cdots<i_k\leq n$ such that $w(i_1)\cdots w(i_k)$ is in the same relative order as $p(1)\cdots p(k)$. If $w$ does not contain~$p$, then we say that $w$ \defi{avoids~$p$}, or is \defi{$p$-avoiding}. 
\end{definition}

For example, the permutation $4231$ in $\mathfrak{S}_4$ has the pattern $321$ twice. Let $[321;3412](w)$ be the number of distinct $321$-and $3412$-patterns in a permutation~$w$. Then we can interpret Theorem~2.17 in~\cite{Tenner2012} in terms of the complexity of $X_w$.

\begin{theorem} \cite[Theorem~2.17]{Tenner2012}\label{Tenner-rep}
For a permutation $w$ in $\mathfrak{S}_n$, we have
\begin{enumerate}
\item $c(w)=0$ if and only if $[321;3412](w)=0$, and 
\item  $c(w)=1$ if and only if $[321;3412](w)=1$.
\end{enumerate}
\end{theorem}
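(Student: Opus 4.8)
The plan is to deduce the statement from a combinatorial fact about reduced decompositions, using the identity obtained above, namely
\[
c(w)=\ell(w)-|\supp(w)|=\rep(w),
\]
where $\rep(w)$ denotes the number of repeated letters in one (equivalently, any) reduced decomposition of $w$. Granting this identity, the theorem becomes the verbatim restatement of \cite[Theorem~2.17]{Tenner2012}, so one route is to invoke that result directly; I will instead indicate a self-contained argument based on Theorem~\ref{thm:1} and on the equivalences $(2')\Leftrightarrow(3')$ of Theorem~\ref{thm:smooth} and $(2'')\Leftrightarrow(3'')$ of Theorem~\ref{thm:singular}, both due to \cite{Daly}.

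For part~(1): by the identity above, $c(w)=0$ exactly when $\ell(w)=|\supp(w)|$, i.e. exactly when some reduced decomposition of $w$ consists of distinct letters; by the equivalences $(0)\Leftrightarrow(2)\Leftrightarrow(3)$ of Theorem~\ref{thm:1} this holds exactly when $w$ avoids both $321$ and $3412$, that is, exactly when $[321;3412](w)=0$.

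For part~(2) I would prove both implications. First, $c(w)=1$ forces $[321;3412](w)\geq 1$ by part~(1). For the reverse implication within~(2), suppose $[321;3412](w)=1$; by definition of $[321;3412]$ this means either $w$ contains $321$ exactly once and avoids $3412$, or $w$ contains $3412$ exactly once and avoids $321$. In the first case \cite{Daly} produces a reduced decomposition of $w$ having $s_is_{i+1}s_i$ as a factor and no other repetition, whence $\rep(w)=1$ and $c(w)=1$; in the second case it produces one having $s_{i+1}s_is_{i+2}s_{i+1}$ as a factor and no other repetition, and again $\rep(w)=1$ (the unique repeated letter being $s_{i+1}$), so $c(w)=1$. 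For the forward implication, assume $c(w)=1$ and fix a reduced decomposition $\underline w$ of $w$; since $\ell(w)-|\supp(w)|=1$, exactly one generator $s_p$ occurs twice in $\underline w$ while every other generator occurring in $\underline w$ occurs once. Using the commutation relations $s_as_b=s_bs_a$ for $|a-b|\geq 2$ together with the braid relations, normalize $\underline w$ so that the two copies of $s_p$ enclose a factor using only $s_{p-1}$ and $s_{p+1}$, each at most once; this enclosed factor is nonempty because $\underline w$ is reduced, and up to the commutation $s_{p-1}s_{p+1}=s_{p+1}s_{p-1}$ it equals $s_{p-1}$, or $s_{p+1}$, or $s_{p-1}s_{p+1}$. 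In the first two cases $\underline w$ has $s_ps_{p\pm1}s_p$ as a factor and no other repetition, so $w$ contains $321$ exactly once and avoids $3412$ by \cite{Daly}, giving $[321;3412](w)=1$; in the last case $\underline w$ has $s_ps_{p-1}s_{p+1}s_p=s_{(p-1)+1}s_{p-1}s_{(p-1)+2}s_{(p-1)+1}$ as a factor and no other repetition, so $w$ contains $3412$ exactly once and avoids $321$ by \cite{Daly}, giving again $[321;3412](w)=1$.

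The single step that is not formal bookkeeping is the normalization in the forward implication of~(2): isolating the unique repeated generator inside a short braid- or $3412$-type factor requires controlling how the commutation and braid moves interact with the other (non-repeated) generators interleaved in $\underline w$. I expect this to be the main obstacle; it is exactly the reduced-word analysis performed in \cite{Tenner2012} and \cite{Daly}, and once it is granted, everything else follows formally from Theorem~\ref{thm:1} and the identity $c(w)=\rep(w)$.
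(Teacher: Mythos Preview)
The paper does not prove this theorem at all: it is quoted verbatim from \cite[Theorem~2.17]{Tenner2012} as an external input, and the identity $c(w)=\rep(w)$ established just before the statement is precisely what allows the authors to interpret Tenner's result in terms of complexity. So there is no ``paper's own proof'' to compare against; the intended route is simply to cite Tenner.

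Your self-contained argument is essentially an attempt to rederive Tenner's and Daly's combinatorics. Part~(1) and the backward direction of part~(2) are clean, since they only use the easy directions of Theorem~\ref{thm:1} and of Daly's characterizations. The forward direction of~(2) is where the real work lies, and you have correctly isolated it: showing that a reduced word with a single repeated generator $s_p$ can always be rewritten so that the two copies of $s_p$ enclose only letters from $\{s_{p-1},s_{p+1}\}$ is exactly the reduced-word analysis carried out in \cite{Tenner2012} and \cite{Daly}. Your sketch (commute far-away generators across $s_p$) is the right idea but is not yet a proof, because moving a distant $s_q$ out of the enclosed region may require commuting it past $s_{q\pm1}$, which need not lie on the same side; one has to argue more carefully, e.g.\ by induction on the length of the enclosed factor or by tracking how the support of the enclosed subword interacts with $p$. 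Once that normalization is made rigorous, your case analysis ($s_ps_{p-1}s_p$, $s_ps_{p+1}s_p$, $s_ps_{p-1}s_{p+1}s_p$) together with the converse directions of Daly's theorems finishes the job. In short: your outline is correct and buys a proof independent of the specific citation, but the step you flag as the obstacle is genuinely the whole content of the cited theorem.
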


\begin{example}\label{ex:complexity one-S4}
For $w\in \mathfrak{S}_4$, $[321;3412](w)=1$ if and only if $w$ is one of the following permutations
$$1432,3214,4132,4213,2431,3241,3412.$$
The first six permutations contains the $321$-pattern once, and the last one avoids it.
\end{example}

Recall that not every Schubert variety is smooth, and Lakshmibai and Sandhya characterized the smoothness of Schubert varieties in terms of pattern avoidance. 

\begin{theorem}\cite{L-S1990}\label{thm:LS}
 For a permutation $w\in\mathfrak{S}_n$, the Schubert variety $X_w$ is smooth if and only if $w$ avoids the patterns $3412$ and $4231$.
\end{theorem}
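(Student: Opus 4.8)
\emph{Proof proposal.} The plan is to deduce the criterion from the classical principle that a Schubert variety is smooth precisely when it is smooth at the most degenerate $\T$-fixed point $eB$, and then to translate the resulting count of Bruhat-comparable transpositions into the pattern condition.

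The first step is a reduction to the tangent space at $eB$. The singular locus $\mathrm{Sing}(X_w)$ is a closed $B$-stable subset of $X_w$, hence a union of Schubert varieties; since the point $eB$ lies in every Schubert variety, $\mathrm{Sing}(X_w)\neq\emptyset$ forces $eB\in\mathrm{Sing}(X_w)$. As $X_w$ is irreducible of dimension $\ell(w)$, we conclude that $X_w$ is smooth if and only if $\dim_{\C}T_{eB}X_w=\ell(w)$. I would then compute this tangent space: identifying $T_{eB}(\flag(\C^n))$ with $\bigoplus_{1\le i<j\le n}\mathfrak g_{-(\ve_i-\ve_j)}$, one shows by the standard analysis of the $B$-orbit stratification (equivalently, by listing the $\T$-invariant curves of $X_w$ through $eB$) that $T_{eB}X_w$ is spanned by those summands $\mathfrak g_{-(\ve_i-\ve_j)}$ for which the transposition $(i,j)$ satisfies $(i,j)\le w$ in the Bruhat order. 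Hence $\dim_{\C}T_{eB}X_w=|\{\,t\in T\mid t\le w\,\}|$; and since any tangent space of the irreducible variety $X_w$ has dimension at least $\ell(w)$, we always have $|\{t\in T\mid t\le w\}|\ge\ell(w)$, with equality exactly when $X_w$ is smooth.

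It remains to establish the combinatorial equivalence
\[
|\{\,t\in T\mid t\le w\,\}|=\ell(w)\qquad\Longleftrightarrow\qquad w\text{ avoids }3412\text{ and }4231 .
\]
For the implication ``$\Rightarrow$'' I would argue contrapositively through the monotonicity of the excess $\delta(w):=|\{t\in T\mid t\le w\}|-\ell(w)\ge 0$ under pattern containment: if $w$ contains a pattern $p$ in a window $i_1<\dots<i_k$, then comparing the transpositions below $w$ with those below $p$ via the componentwise (tableau) criterion for the Bruhat order gives $\delta(w)\ge\delta(p)$, and a direct check yields $\delta(3412)=\delta(4231)=1$, so $w$ containing either pattern forces $\delta(w)\ge 1$. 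For ``$\Leftarrow$'' I would induct on $n$: if $w$ fixes $1$ or $n$, then the pattern hypothesis passes to $\mathfrak{S}_{n-1}$ and $X_w$ is isomorphic to a Schubert variety in a smaller flag manifold, so we are done by induction; otherwise, avoidance of $3412$ and $4231$ singles out a position that realises $X_w$ as the total space of a fibre bundle over a Schubert variety $X_{w'}$ in a lower-rank flag manifold whose fibre is again a Schubert variety avoiding both patterns, whence smoothness propagates.

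The main obstacle is the implication ``$w$ avoids $3412$ and $4231\Rightarrow X_w$ smooth'': one must construct the fibration --- equivalently, determine exactly which transpositions are Bruhat-below a pattern-avoiding $w$ and verify there are precisely $\ell(w)$ of them --- and check that the fibre permutation still avoids both patterns. An alternative organisation replaces the first two steps by the Carrell--Peterson criterion, that rational smoothness of $X_w$ is equivalent to palindromicity of the Poincar\'{e} polynomial $\sum_{v\le w}t^{\ell(v)}$, combined with Deodhar's theorem that rational smoothness coincides with smoothness for Schubert varieties in type $A$; but then the crux becomes the equally delicate combinatorial fact that this polynomial is palindromic if and only if $w$ avoids $3412$ and $4231$, so the difficulty is relocated rather than removed.
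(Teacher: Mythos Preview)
The paper does not prove this statement: Theorem~\ref{thm:LS} is quoted from Lakshmibai--Sandhya \cite{L-S1990} and used as a black box, so there is no ``paper's own proof'' to compare against. Your outline is essentially the classical strategy of the original source --- reduce to the tangent space at $eB$ via $B$-invariance of the singular locus, identify $\dim T_{eB}X_w$ with $|\{t\in T:t\le w\}|$, and then analyse that count combinatorially.

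One point in your sketch deserves scrutiny. For the direction ``$w$ contains $3412$ or $4231\Rightarrow X_w$ singular'' you invoke a monotonicity $\delta(w)\ge\delta(p)$ of the excess under pattern containment, justified by ``comparing the transpositions below $w$ with those below $p$ via the tableau criterion''. This inequality is true, but it is not as immediate as you suggest: the map sending a reflection $(a,b)\le p$ to $(i_a,i_b)$ need not land among reflections $\le w$, and even when one arranges an injection, matching it against the injection of inversions requires care. The cleaner route here is direct: given a $4231$ (resp.\ $3412$) pattern at positions $a<b<c<d$, exhibit explicitly a non-inversion transposition that is nonetheless $\le w$ (this is what Lakshmibai--Sandhya actually do). Your identification of the converse as the hard step, and of the fibre-bundle induction as the mechanism, is accurate.
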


Combining Theorems~\ref{Tenner-rep} and \ref{thm:LS}, we obtain the following proposition. It shows the equivalence between the first two statements in Theorems~\ref{thm:smooth} and~\ref{thm:singular}.

\begin{proposition}\label{prop:complexity-pattern}
For $w\in\mathfrak{S}_n$, the following hold:
\begin{enumerate}
\item $X_w$ is a toric variety if and only if $w$ avoids the patterns both $321$ and $3412$;
\item $X_w$ is smooth and of complexity one if and only if $w$ has the pattern $321$ exactly once and avoids the pattern $3412$; and
\item $X_w$ is singular and of complexity one if and only if $w$ has the pattern $3412$ exactly once and avoids the pattern $321$.
\end{enumerate}
\end{proposition}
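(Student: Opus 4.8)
The plan is to obtain all three equivalences directly from Theorem~\ref{Tenner-rep} together with the Lakshmibai--Sandhya smoothness criterion (Theorem~\ref{thm:LS}), after first recording one elementary combinatorial observation that bridges the two. That observation is that the pattern $4231$ contains the pattern $321$ at least twice: if positions $i_1<i_2<i_3<i_4$ realize $4231$ in $w$, then $w(i_1)w(i_2)w(i_4)$ and $w(i_1)w(i_3)w(i_4)$ both realize $321$ and are distinct occurrences, while such positions produce no $3412$-occurrence. Hence a single $4231$-pattern already forces $[321;3412](w)\ge 2$, so $c(w)\ge 2$ by Theorem~\ref{Tenner-rep}; equivalently, $c(w)\le 1$ makes $w$ automatically $4231$-avoiding, which is exactly what is needed to translate the criterion of Theorem~\ref{thm:LS} into Tenner's language.

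For part (1) I would just note that $X_w$ is toric precisely when $c(w)=0$, and by Theorem~\ref{Tenner-rep} this is equivalent to $[321;3412](w)=0$, i.e. to $w$ avoiding both $321$ and $3412$. For part (2): if $X_w$ is smooth and of complexity one, Theorem~\ref{thm:LS} gives that $w$ avoids $3412$ and $4231$, and Theorem~\ref{Tenner-rep} gives $[321;3412](w)=1$, so---there being no $3412$-occurrence---$w$ contains $321$ exactly once; conversely, if $w$ contains $321$ exactly once and avoids $3412$, then $[321;3412](w)=1$, hence $c(w)=1$, and the observation forces $w$ to avoid $4231$, so $X_w$ is smooth. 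Part (3) runs in parallel: if $X_w$ is singular and of complexity one, then $[321;3412](w)=1$ and $w$ contains $3412$ or $4231$, but the observation rules out the latter when $c(w)=1$, so $w$ contains $3412$---necessarily exactly once, and then avoiding $321$; the converse uses that a $3412$-pattern makes $X_w$ singular and that $[321;3412](w)=1$ gives $c(w)=1$.

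I do not expect a genuine obstacle here: the proposition is a formal consequence of the two cited theorems. The only step requiring attention is the bookkeeping in the first paragraph---verifying that every $4231$-occurrence supplies at least two distinct $321$-occurrences and no $3412$-occurrence---since this is precisely what reconciles the pattern set $\{3412,4231\}$ appearing in the smoothness criterion with the pattern set $\{321,3412\}$ governing the complexity.
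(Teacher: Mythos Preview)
Your proof is correct and follows exactly the approach the paper intends: the paper simply states that the proposition is obtained by ``combining Theorems~\ref{Tenner-rep} and~\ref{thm:LS}'' without spelling out the details. Your elementary observation---that any $4231$-occurrence supplies two distinct $321$-occurrences, so $[321;3412](w)\le 1$ forces $w$ to be $4231$-avoiding---is precisely the bridge between the two cited theorems that the paper leaves implicit, and your case analysis is sound.
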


In general, $c(w)\leq [321;3412](w)$ and the equality holds only when $w$ avoids every pattern in the set
\begin{equation*}\label{eq:avoiding_pattern_set}
\{4321, 34512, 45123, 35412, 43512, 45132, 45213, 53412, 45312, 45231\},
\end{equation*} 
see \cite[Theorem 3.2]{Tenner2012}. For example, the permutation $4321$ has four distinct $321$-patterns but $c(4321)=3$. In the set above, every pattern except $4321$ has the pattern $3412$.
Since $w$ avoids the pattern $3412$ when $X_w$ is smooth, we get the following proposition.

\begin{proposition}\label{prop:complexity-pattern-general}
For a smooth Schubert variety $X_w$, the complexity of $X_w$ is less than or equal to the number of distinct $321$-patterns in $w$, and the equality holds if and only if $w$ avoids $4321$.
\end{proposition}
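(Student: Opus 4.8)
The plan is to deduce this from the general inequality $c(w)\le[321;3412](w)$ quoted above (from \cite[Theorem~3.2]{Tenner2012}) together with the smoothness criterion of Lakshmibai--Sandhya, Theorem~\ref{thm:LS}. First I would note that if $X_w$ is smooth then $w$ avoids $3412$, so $w$ has no $3412$-pattern at all; hence $[321;3412](w)$ is exactly the number of distinct $321$-patterns in $w$. Feeding this into $c(w)\le[321;3412](w)$ immediately gives the first assertion of the proposition. (Here it is convenient to keep in mind the identity $c(w)=\ell(w)-|\supp(w)|=\rep(w)$ established earlier, which is what makes Tenner's pattern counts applicable.)

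For the equality statement I would invoke the sharper form of \cite[Theorem~3.2]{Tenner2012}: the equality $c(w)=[321;3412](w)$ holds precisely when $w$ avoids each of the ten patterns
\[
4321,\ 34512,\ 45123,\ 35412,\ 43512,\ 45132,\ 45213,\ 53412,\ 45312,\ 45231.
\]
The key observation is that, apart from $4321$ itself, every one of these ten patterns contains $3412$ as a subpattern (for instance, $34512$ contains $3412$ on its first, second, fourth and fifth entries, $45123$ on its first, second, third and fourth entries, and similarly for the remaining seven). Therefore, once $w$ avoids $3412$ — which it does whenever $X_w$ is smooth — avoidance of those nine patterns is automatic, and the list of conditions characterizing equality collapses to the single requirement that $w$ avoid $4321$. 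Combined with the first paragraph, this says: for smooth $X_w$, the complexity $c(w)$ equals the number of distinct $321$-patterns in $w$ if and only if $w$ avoids $4321$, which is the assertion.

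The only part with real content beyond citing the two quoted results is the finite check that each of the nine patterns other than $4321$ contains $3412$; this is an elementary case-by-case verification and is the step I would actually write out. The one subtlety to be careful about is the logical form of \cite[Theorem~3.2]{Tenner2012}: the ``only when'' direction is enough for the forward implication of the equality claim, whereas the reverse implication needs that avoidance of all ten patterns \emph{forces} $c(w)=[321;3412](w)$, so I would cite (or, if necessary, supply the short argument for) the direction that is not explicitly stated. No new geometric input is required — the whole proof reduces to the pattern-avoidance combinatorics already assembled in this section.
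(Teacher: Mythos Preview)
Your proposal is correct and follows exactly the paper's argument: the paper derives the proposition directly from Tenner's inequality $c(w)\le[321;3412](w)$ and its equality characterization, together with the Lakshmibai--Sandhya criterion, noting that every pattern in the ten-element list except $4321$ contains $3412$. Your added remark about needing both directions of \cite[Theorem~3.2]{Tenner2012} is a fair point of care, but otherwise there is nothing to add.
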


Since $c(w)$ equals the number of repeated letters in a reduced decomposition of~$w$, Theorem~\ref{Tenner-rep}(1) implies that a permutation $w\in \mathfrak{S}_n$ avoids both $321$-and $3412$-patterns if and only if every reduced decomposition of $w$ consists of distinct letters.  Daly characterized the reduced decomposition of the permutations satisfying $[321;3412](w)=1$ as follows.

\begin{theorem}\cite{Daly}\label{thm:Daly}
For a permutation $w\in\mathfrak{S}_n$, the following hold:
\begin{enumerate}
\item $w$ contains exactly one $321$ pattern and avoids $3412$ if and only if there exists a reduced decomposition of $w$ containing $s_{i}s_{i+1}s_{i}$ as a factor and no other repetitions. 
\item $w$ contains exactly one $3412$ pattern and avoids $321$ if and only if there exists a reduced decomposition of $w$ containing $s_{i+1}s_{i}s_{i+2}s_{i+1}$ as a factor and no other repetitions.
\end{enumerate}
\end{theorem}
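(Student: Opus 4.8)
The plan is to work throughout with reduced decompositions and with heaps (in the sense of Stembridge), exploiting three facts already available: the identity $c(w)=\rep(w)=\ell(w)-\lvert\supp(w)\rvert$, so that the number of repeated letters in any reduced decomposition of $w$ equals the complexity $c(w)$; Theorem~\ref{Tenner-rep}, which reads $\rep(w)=1\iff[321;3412](w)=1$; and the classical theorem of Billey--Jockusch--Stanley and Stembridge that in type $A$ a permutation avoids $321$ exactly when it is fully commutative, i.e.\ exactly when no reduced decomposition of it contains a \emph{braid factor} $s_bs_{b+1}s_b$. The role of this dictionary is that once $\rep(w)=1$, Theorem~\ref{Tenner-rep} forces $[321;3412](w)=1$; since a $321$-occurrence and a $3412$-occurrence can never coincide, this means $w$ has in total exactly one $321$- or $3412$-occurrence. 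Thus ``$w$ contains $321$ once and avoids $3412$'' is equivalent (given $\rep(w)=1$) to ``$w$ is not fully commutative'', and ``$w$ contains $3412$ once and avoids $321$'' is equivalent to ``$w$ is fully commutative''.

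Three of the four implications are then quick. For the forward direction of part~(1): if $w$ contains a $321$, it is not fully commutative, so some reduced decomposition $\underline w$ has a braid factor $s_is_{i+1}s_i$; as $\rep(w)=1$ and $s_i$ occurs twice in this factor, $i$ is the unique repeated letter, both its occurrences lie inside the factor, and $s_{i+1}$ occurs once, so $\underline w$ has no other repetition. For the reverse direction of part~(1): given such a $\underline w$, counting letters gives $\rep(w)=1$, hence $[321;3412](w)=1$, and the braid factor makes $w$ not fully commutative, hence $321$-containing; combining, $w$ contains $321$ exactly once and avoids $3412$. For the reverse direction of part~(2): write $\underline w=\underline u\,(s_{i+1}s_is_{i+2}s_{i+1})\,\underline v$; again $\rep(w)=1$ and $[321;3412](w)=1$. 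To see $w$ is fully commutative, observe that all reduced decompositions of $w$ share the letter multiset of $\underline w$, in which only $i+1$ repeats; a braid factor in any of them must therefore be $s_{i+1}s_is_{i+1}$ or $s_{i+1}s_{i+2}s_{i+1}$, hence must have exactly one letter between the two copies of $s_{i+1}$. But in $\underline w$ both $s_i$ and $s_{i+2}$ lie strictly between the two copies of $s_{i+1}$, and as neither commutes with $s_{i+1}$ this persists under every commutation move; so no word in the commutation class of $\underline w$ has a braid factor, and since a braid move can be applied only where a braid factor is present, the Tits--Matsumoto theorem gives that every reduced decomposition of $w$ lies in this class and has no braid factor. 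Hence $w$ is fully commutative, so avoids $321$ and, with $[321;3412](w)=1$, contains $3412$ exactly once.

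The substantial implication is the forward direction of part~(2): from $w$ fully commutative with $\rep(w)=1$, produce a reduced decomposition with factor $s_{i+1}s_is_{i+2}s_{i+1}$ and no other repetition. Since $w$ is fully commutative its heap $H$ is a well-defined poset whose linear extensions are precisely the reduced decompositions of $w$; the repeated letter $a$ occupies two elements $p\prec q$ of $H$, and every other label occurs once. I would first establish the structural claim that the open interval $(p,q)$ in $H$ equals $\{s_{a-1},s_{a+1}\}$, and that $s_{a-1},s_{a+1}$ are the only upper covers of $p$ and the only lower covers of $q$. The extreme cases are immediate: if neither $s_{a-1}$ nor $s_{a+1}$ lay strictly between $p$ and $q$, then $q$ would cover $p$ and some reduced decomposition would contain $s_as_a$, which is impossible; if exactly one did, some reduced decomposition would contain a braid factor $s_as_{a-1}s_a$ or $s_as_{a+1}s_a$, contradicting full commutativity. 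So both lie in $(p,q)$; being generators that commute, $s_{a-1}$ and $s_{a+1}$ are incomparable in $H$ (a chain between them would pass through an $s_a$, and both copies of $s_a$ lie outside $(p,q)$), and a short argument then excludes any further element of $(p,q)$ and pins down the covers. Finally, the elements of $H$ below $s_{a-1}$ or $s_{a+1}$ but not below $p$ are incomparable to $p$, so they may be listed before $p$ in a linear extension; writing $p,s_{a-1},s_{a+1},q$ consecutively after them produces a reduced decomposition with the factor $s_as_{a-1}s_{a+1}s_a$, which has no other repetition because $\rep(w)=1$. Taking $i=a-1$ completes this implication.

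The main obstacle is precisely this structural claim about the heap: controlling which generators are forced to lie between the two occurrences of the repeated generator of a complexity-one fully commutative permutation, and then verifying that a linear extension realizing the factor $s_{i+1}s_is_{i+2}s_{i+1}$ exists. This is the single place where the pattern-avoidance hypothesis does genuine work — full commutativity is exactly what forbids the stray braid factors that a naive interval analysis would otherwise permit.
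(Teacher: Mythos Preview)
The paper does not prove Theorem~\ref{thm:Daly}; it is quoted from Daly~\cite{Daly} and used as a black box. There is thus no proof in the paper to compare against, and your sketch constitutes an independent proof of the cited result.

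Your argument is correct. The three ``quick'' implications are handled cleanly once Theorem~\ref{Tenner-rep} is combined with the Billey--Jockusch--Stanley/Stembridge equivalence between $321$-avoidance and full commutativity; in particular your Tits--Matsumoto argument for the reverse direction of (2) is valid, since in every word of the commutation class both $s_i$ and $s_{i+2}$ remain trapped between the two copies of $s_{i+1}$. The substantive forward direction of (2) via the heap is also correct, but two steps are compressed and would benefit from one extra line each. First, after establishing that both $x_{a-1}$ and $x_{a+1}$ lie in $(p,q)$ and are incomparable, the exclusion of any further $y\in(p,q)$ follows because such a $y$ must lie above some upper cover of $p$ (hence above $x_{a-1}$ or $x_{a+1}$) and below some lower cover of $q$ (hence below $x_{a-1}$ or $x_{a+1}$), which forces $y\in\{x_{a-1},x_{a+1}\}$ by incomparability; the same reasoning, together with Stembridge's convex-chain criterion for full commutativity, also dispatches the ``exactly one'' case without needing to realize the braid as a consecutive factor. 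Second, the existence of the desired linear extension is most transparently argued by noting that $\{z:z\not\geq p\}$ is an order ideal of the heap and that $[p,q]=\{p,x_{a-1},x_{a+1},q\}$ is an order ideal of its complement, so one may list the first ideal, then $p,x_{a-1},x_{a+1},q$, then the remainder. With these details spelled out the proof is complete.
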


The above theorem shows the equivalence between the second and the third statements in Theorems~\ref{thm:smooth} and~\ref{thm:singular}.

It is also shown in~\cite{Daly2010} that there is a one-to-one correspondence between $$\{w\in \mathfrak{S}_n\mid\text{ $w$ contains exactly one $321$ pattern and avoids $3412$}\}$$ and $$\{w\in\mathfrak{S}_{n+1}\mid \text{ $w$ contains exactly one $3412$ pattern and avoids $321$}\},$$ and the cardinality of the set is given in \href{https://oeis.org/A001871}{A001871} in OEIS~\cite{OEIS}. Thus we obtain that
\begin{align*}
&\#\{X_w\subseteq \flag(\C^n)\mid X_w\text{ is smooth and of complexity one}\}\\
&=\#\{X_{w'}\subseteq \flag(\C^{n+1}) \mid X_{w'}\text{ is singular and of complexity one}\}.
\end{align*}


\section{Flag Bott--Samelson varieties and \\ generalized Bott--Samelson varieties}\label{sec:flag-bott-samelson}

In this section, we recall flag Bott--Samelson varieties from~\cite{FLS} and generalized Bott--Samelson varieties from~\cite{Jant}.

Let $G=\GL_n(\C)$. For a subset $I \subseteq [n] \coloneqq \{1,\dots,n\}$, we define the subgroup $W_I$ of $W$ by 
\[
W_I \coloneqq \langle s_i \mid i \in I \rangle.
\]
Then the \defi{parabolic subgroup} $P_I$ of $G$ corresponding to $I$ is defined by
\begin{equation*}
P_I =\overline{B w_I B} \subseteq G,
\end{equation*}
where $w_I$ is the longest element in $W_I$. 

\begin{definition}[{\cite[Definition~2.1]{FLS}}]
	Let $\mathcal I = (I_1,\dots,I_r)$ be a sequence of subsets of $[n]$. The \defi{flag Bott--Samelson variety} $Z_{\mathcal I}$ is defined by the orbit space
	\[
	Z_{\mathcal I} \coloneqq (P_{I_1} \times \cdots \times P_{I_r})/\Theta,
	\]
	where the right action $\Theta$ of $B^r \coloneqq \underbrace{B \times \cdots \times B}_{r}$ on $\prod_{k=1}^r P_{I_k}$ is defined by
	\begin{equation}\label{eq:action}
	\Theta((p_1,\dots,p_r), (b_1,\dots,b_r))
	= (p_1b_1, b_1^{-1}p_2b_2,\dots,b_{r-1}^{-1}p_rb_r)
	\end{equation}
	for $(p_1,\dots,p_r) \in \prod_{k=1}^r P_{I_k}$ and $(b_1,\dots,b_r)\in B^r$.
\end{definition}

Note that $P_{I_{k}}/B\cong\mathcal{F}\ell(\C^{|I_{k}|})$ for $k=1,\dots,r$. Hence if $|I_1|=\cdots=|I_r|=1$, then $Z_\mathcal{I}$ becomes a Bott--Samelson variety. Each Bott--Samelson variety can be described as an iterated $\C P^1$-bundle, whereas each flag Bott--Samelson variety can be described as an iterated bundle whose fiber at each stage is a flag manifold.

We recall properties of flag Bott--Samelson varieties from~\cite[Proposition~{2.3}]{FLS}.
The flag Bott--Samelson variety $Z_{\mathcal I}$ is a smooth projective variety and admits a nice decomposition of affine cells.
For $(w_1,\dots,w_r) \in \prod_{k=1}^r W_{I_k}$, we define $Z_{(w_1,\dots,w_r)}'$ and $\Zw{w_1,\dots,w_r}$ in $Z_{\mathcal I}$ by
\[
\begin{split}
Z_{(w_1,\dots,w_r)}' \coloneqq (Bw_1B \times \cdots \times B w_r B)/\Theta, \\
\Zw{w_1,\dots,w_r} \coloneqq (\overline{Bw_1B} \times \cdots \times \overline{B w_rB})/\Theta.
\end{split}
\]
We call $\Zw{w_1,\dots,w_r}$ a \defi{generalized Bott--Samelson variety}, see~\cite{Jant} and also~\cite{BrionKannan,Perrin}. Then $\Zwp{w_1,\dots,w_r}$ is an open dense subset of the generalized Bott--Samelson variety $\Zw{w_1,\dots,w_r}$ and we have that
\begin{equation*}
Z_{(w_1,\dots,w_r)}' \simeq \C^{\sum_{k=1}^r \ell(w_k)}.
\end{equation*}
For $w_{k}\in W_{I_{k}}$, since $\overline{Bw_{k}B} = \bigsqcup_{v \in W_I\atop v\leq w_{k}} BvB$, we have that
\begin{equation}\label{eq_Z_I}
Z_{(w_{1},\dots,w_{r})} = \bigsqcup_{(v_{1},\dots,v_{r}) \in \prod_{k=1}^r W_{I_k} \atop v_{k}\leq w_{k}\,\,\text{for } k=1,\dots,r} Z’_{(v_{1},\dots,v_{r} )}.
\end{equation}
Note that $Z_{\mathcal{I}}=Z_{(w_{I_{1}},\dots,w_{I_{r}})}$, where  $\mathcal{I}=(I_{1},\dots,I_{r})$ and $w_{I_{k}}$ is the longest element in $W_{I_{k}}$ for $k=1,\dots,r$.
\begin{example}\label{example_1213}
	Let $\mathcal I = (\{1,2\}, \{3\})$. Then we have that
\[
\begin{split}
Z_{\mathcal I} &= \Zwp{e,e} \sqcup \Zwp{s_1,e} \sqcup \Zwp{s_2,e} \sqcup \Zwp{s_1s_2,e} \sqcup \Zwp{s_2s_1,e} \sqcup \Zwp{s_1s_2s_1,e} \\
&\qquad \sqcup
 \Zwp{e,s_3} \sqcup \Zwp{s_1,s_3} \sqcup \Zwp{s_2,s_3} \sqcup \Zwp{s_1s_2,s_3} \sqcup \Zwp{s_2s_1,s_3} \sqcup \Zwp{s_1s_2s_1,s_3}.
\end{split}
\]
Each $\Zwp{w_1,w_2}$ is isomorphic to an affine cell as follows:
\begin{gather*}
\Zwp{e,e} \simeq \C^0, \quad 
\Zwp{s_1,e} \simeq \Zwp{s_2,e} \simeq  \Zwp{e,s_3} \simeq \C^1,\\
\Zwp{s_1s_2,e} \simeq \Zwp{s_2s_1,e} \simeq \Zwp{s_1,s_3} \simeq \Zwp{s_2,s_3} \simeq \C^2, \\
\Zwp{s_1s_2s_1,e} \simeq \Zwp{s_1s_2,s_3} \simeq \Zwp{s_2s_1,s_3} \simeq \C^3, \quad 
\Zwp{s_1s_2s_1,s_3} \simeq \C^4.
\end{gather*}
\end{example}

The multiplication map 
\[
p_{\mathcal I} \colon Z_{\mathcal I} \to G/B, \quad [p_1,\dots,p_r] \mapsto p_1 \cdots p_r B
\]
is well-defined because of the definition of the right action $\Theta$.
	The maximal torus~$\mathbb{T}$ acts on a generalized Bott--Samelson variety~$Z_{(w_{1},\dots,w_{r})}$ by
	$$t\cdot [g_{1},\dots,g_{r}]=[tg_{1},g_{2},\dots,g_{r}].$$
	We note that the multiplication map $p_{\mathcal I}$ is $\T$-equivariant.

\begin{proposition}[{\cite[Proposition~{2.7}]{FLS}}]\label{prop_multiplication_map_pI}
	Let $(w_1,\dots,w_r) \in \prod_{k=1}^r W_{I_k}$.
	Suppose that $w_1 \cdots w_r = v$ and $\ell(w_1) + \cdots + \ell(w_r) = \ell(v)$. Then the multiplication map $p_{\mathcal I}$ induces a birational morphism:
	\[
	p_{\mathcal I}|_{\Zw{w_1,\dots,w_r}} \colon \Zw{w_1,\dots,w_r} \to X_v.
	\]
	Indeed, we have an isomorphism between dense open subsets:
	\[
	\Zwp{w_1,\dots,w_r} \stackrel{\sim}{\longrightarrow} BvB/B. 
	\]
\end{proposition}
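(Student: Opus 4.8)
The plan is to argue in two stages: first establish the isomorphism between the open dense cells $\Zwp{w_1,\dots,w_r}\xrightarrow{\sim} BvB/B$, and then deduce that $p_{\mathcal I}$ restricts to a birational morphism $\Zw{w_1,\dots,w_r}\to X_v$. For the first stage, I would use the parametrization of the Schubert cell $BvB/B$ by the product of root subgroups. Recall that for a reduced decomposition $v = s_{j_1}\cdots s_{j_\ell}$ the map $(U_{\alpha_{j_1}}s_{j_1})\times\cdots\times(U_{\alpha_{j_\ell}}s_{j_\ell})\to BvB/B$, $(g_1,\dots,g_\ell)\mapsto g_1\cdots g_\ell B$, is an isomorphism onto $\C^\ell\cong BvB/B$. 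Since $\ell(w_1)+\cdots+\ell(w_r)=\ell(v)$, concatenating reduced decompositions of $w_1,\dots,w_r$ yields a reduced decomposition of $v$; grouping the root-subgroup factors accordingly shows that the product map on $\Zwp{w_1,\dots,w_r}\simeq \C^{\sum\ell(w_k)}$ coincides with this parametrization of $BvB/B$ after passing to the $\Theta$-quotient. Concretely, one chooses the section $Bw_kB/B\simeq (U_{\alpha}\cdots)s_{j}\cdots$ of $Bw_kB$ modulo $B$ inside each factor and checks that $p_{\mathcal I}$ sends the resulting cell isomorphically onto the Schubert cell. This gives the dense-open isomorphism, which in particular shows $p_{\mathcal I}|_{\Zw{w_1,\dots,w_r}}$ is dominant and generically injective.

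For the second stage, I would first observe that $\Zw{w_1,\dots,w_r}$ is a projective variety (it is a closed subvariety of the projective variety $Z_{\mathcal I}$, being $(\overline{Bw_1B}\times\cdots\times\overline{Bw_rB})/\Theta$), and that $p_{\mathcal I}$ is a morphism of projective varieties, hence proper with closed image. Since the image contains the dense cell $BvB/B$ and $X_v=\overline{BvB/B}$, the image is exactly $X_v$. Thus $p_{\mathcal I}|_{\Zw{w_1,\dots,w_r}}\colon \Zw{w_1,\dots,w_r}\to X_v$ is a surjective proper morphism that is an isomorphism over the dense open set $BvB/B$, which is precisely the statement that it is a birational morphism. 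One can also note that both source and target have dimension $\ell(v)$ by the cell decomposition \eqref{eq_Z_I}, confirming birationality.

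The main obstacle I expect is the careful bookkeeping in the first stage: one must verify that the chosen sections of $Bw_kB\to Bw_kB/B$ are compatible with the $\Theta$-action so that the product map descends to the quotient and so that the composite of these sections recovers the standard Bott--Samelson-type parametrization of $BvB/B$ by a product of one-dimensional root groups. This is essentially the content of the reduced-word concatenation argument, but making it rigorous requires tracking how the $b_k^{-1}(\cdot)b_{k+1}$ twists in \eqref{eq:action} interact with the unipotent factors. Once this compatibility is in place, everything else follows from standard properness and density arguments, so I would devote most of the write-up to this step and treat the second stage briefly.
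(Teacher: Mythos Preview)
The paper does not supply its own proof of this proposition; it is quoted verbatim from \cite[Proposition~2.7]{FLS} and used as a black box. Your sketch is the standard argument one finds in that reference (and in the classical Bott--Samelson setting): parametrize each $Bw_kB$ as $U_{w_k}w_kB$ via root subgroups, observe that the length-additivity hypothesis makes the concatenated word reduced so that the product $U_{w_1}w_1\cdots U_{w_r}w_r$ lands isomorphically in $BvB$, and then invoke properness to get surjectivity onto $X_v$. This is correct and is essentially what \cite{FLS} does, so there is nothing to compare against in the present paper.
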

\begin{example}
	Let $G = \GL_3(\C)$, and let $\mathcal I = (\{1\}, \{2\}, \{1\})$. Then the flag Bott--Samelson variety $Z_{\mathcal I}$ has the decomposition:
	\[
	\begin{split}
	Z_{\mathcal I} &= \Zwp{e,e,e} \sqcup \Zwp{s_1,e,e} \sqcup \Zwp {e,s_2,e} \sqcup \Zwp{s_1,s_2,e}  \\
	& \qquad \sqcup \Zwp{e,e,s_1} \sqcup \Zwp{s_1,e,s_1} \sqcup \Zwp {e,s_2,s_1} \sqcup \Zwp{s_1,s_2,s_1}.
	\end{split}
	\]
	By Proposition~\ref{prop_multiplication_map_pI}, the multiplication map $p_{\mathcal I}$ induces the isomorphism $\Zwp{w_1,w_2,w_3} \cong B w_1w_2w_3 B/B$ except for $(w_1,w_2,w_3) = (s_1,e,s_1)$. Moreover, one can see that the multiplication map $p_{\mathcal I}$ is injective on $Z_{\mathcal I} \setminus (\Zwp{s_1,e,s_1}\sqcup \Zwp{s_1,e,e} \sqcup \Zwp{e,e,s_1})$. 
\end{example}
\begin{corollary}\label{cor_fBS_and_Schubert_isomorphic}
	Suppose that $I_1,\dots,I_r$ are pairwise disjoint subsets of $[n]$. 
	Then the multiplication map $p_{\mathcal I}$ with $\mathcal{I}=(I_1,\dots,I_r)$ induces an  isomorphism between $Z_{(w_{1},\dots,w_{r})}$ and its image $X_{w_1 \cdots w_r}$ as $\T$-varieties, where $w_k$ is an element in $W_{I_k}$. 
\end{corollary}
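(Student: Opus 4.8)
The plan is to reduce the statement to Proposition~\ref{prop_multiplication_map_pI} via the cell decomposition~\eqref{eq_Z_I}, to check that $p_{\mathcal I}$ restricted to $Z_{(w_1,\dots,w_r)}$ is bijective, and then to upgrade ``bijective and birational'' to ``isomorphism'' using normality of Schubert varieties.

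First I would record the combinatorial input: \emph{if $I_1,\dots,I_r$ are pairwise disjoint and $v_k\in W_{I_k}$ for all $k$, then $\ell(v_1\cdots v_r)=\sum_{k=1}^r\ell(v_k)$, and a concatenation of reduced words for $v_1,\dots,v_r$ is a reduced word for $v_1\cdots v_r$.} This follows by induction on $r$. Since the set of simple reflections occurring in a reduced word is a braid–move invariant, every reduced word of an element of $W_J$ uses only the simple reflections indexed by $J$; hence, with $K\coloneqq I_2\cup\cdots\cup I_r$, no reduced word of $v_1\in W_{I_1}$ ends in a simple reflection of $K$, i.e.\ $v_1$ is the minimal‑length representative of its coset in $W/W_K$. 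The standard length additivity for the parabolic factorization then gives $\ell\big(v_1\cdot(v_2\cdots v_r)\big)=\ell(v_1)+\ell(v_2\cdots v_r)$, and applying the inductive hypothesis to $v_2\cdots v_r\in W_K$ completes the step.

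Now set $v\coloneqq w_1\cdots w_r$; by the above $\ell(v)=\sum_k\ell(w_k)$, so Proposition~\ref{prop_multiplication_map_pI} applies: $p\coloneqq p_{\mathcal I}|_{Z_{(w_1,\dots,w_r)}}\colon Z_{(w_1,\dots,w_r)}\to X_v$ is a $\T$‑equivariant, projective (hence proper) birational morphism, and for each tuple $(v_1,\dots,v_r)$ with $v_k\le w_k$ it restricts on the cell $\Zwp{v_1,\dots,v_r}$ to an isomorphism $\Zwp{v_1,\dots,v_r}\stackrel{\sim}{\longrightarrow}B(v_1\cdots v_r)B/B$ (again by Proposition~\ref{prop_multiplication_map_pI}, now applied to $(v_1,\dots,v_r)$, whose lengths add by the lemma). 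I then claim $p$ is bijective. Surjectivity: $Z_{(w_1,\dots,w_r)}$ is closed in the projective variety $Z_{\mathcal I}$, so $p(Z_{(w_1,\dots,w_r)})$ is closed; it contains $BvB/B=p(\Zwp{w_1,\dots,w_r})$, hence contains $X_v=\overline{BvB/B}$, and it lies in $X_v$ since $v_k\le w_k$ forces $v_1\cdots v_r\le v$ by the subword property (using that the concatenated reduced words are reduced). Injectivity: by the cell computation, distinct tuples are carried into \emph{distinct} Schubert cells once the map $(v_1,\dots,v_r)\mapsto v_1\cdots v_r$ is injective on $\prod_k W_{I_k}$; and it is, for if $v_1\cdots v_r=v_1'\cdots v_r'=:u$ then $v_1$ and $v_1'$ are both the unique minimal‑length representative of the coset $uW_K$ (both lie in $W_{I_1}$ and $K\cap I_1=\emptyset$), so $v_1=v_1'$, and one iterates.

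Finally, $p$ is proper and bijective, hence finite, and birational, with normal target $X_v$ (Schubert varieties are normal); a finite birational morphism onto a normal variety is an isomorphism. Since $p=p_{\mathcal I}|_{Z_{(w_1,\dots,w_r)}}$ is $\T$‑equivariant, it is an isomorphism of $\T$‑varieties, which is the assertion. I expect the main obstacle to be the combinatorial bijectivity step — the length‑additivity lemma and, above all, the injectivity of $\prod_k W_{I_k}\to W$ together with the bookkeeping of which cell maps to which Schubert cell — whereas the passage from ``bijective and birational'' to ``isomorphism'' is routine once normality of $X_v$ is invoked.
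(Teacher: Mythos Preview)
Your proof is correct and follows essentially the same approach as the paper: both use the cell decomposition~\eqref{eq_Z_I}, match cells to Schubert cells via Proposition~\ref{prop_multiplication_map_pI} together with length additivity for elements with disjoint supports, and then conclude. Your explicit invocation of normality of $X_v$ and Zariski's Main Theorem to upgrade ``bijective and birational'' to ``isomorphism'' is a welcome clarification of a step the paper leaves implicit.
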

\begin{proof}
	We first note that $ X_{w_1 \cdots w_r} = \bigsqcup_{v \leq w_1 \cdots w_r} BvB/B$.
	Since $I_1,\dots,I_r$ are pairwise disjoint and $w_k \in W_{I_k}$, we have that
	\begin{equation*}
	v \leq w_1 \cdots w_r \iff v = v_1 \cdots v_r, \quad \text{ and }v_k \leq w_{k} \text{ in } W_{I_k}.
	\end{equation*}
	Therefore, we get that
	\begin{equation}\label{eq_X_w1_wr}
	X_{w_1 \cdots w_r} = \bigsqcup_{(v_1,\dots,v_r) \in \prod_{k=1}^r W_{I_k} \atop v_{k}\leq w_{k}\,\,\text{ for }k=1,\dots,r} B v_1 \cdots v_r B/B.
	\end{equation}
	This implies that the Schubert variety $X_{w_1 \cdots w_r}$ can be decomposed into affine cells indexed by elements in $\prod_{k=1}^r [e,w_{k}]$, where $[e,w_{k}]$ is a Bruhat interval in $W_{I_{k}}$. 
	Moreover, since $I_1,\dots,I_r$ are pairwise disjoint, we know that $\ell(v_1 \cdots v_r) = \ell(v_1) + \cdots + \ell(v_r)$. Hence by Proposition~\ref{prop_multiplication_map_pI}, the multiplication map~$p_{I}$ induces a $\T$-equivariant isomorphism
	\[
	\Zwp{v_1,\dots,v_r} \cong B v_1 \cdots v_r B/B
	\]
	for each $(v_1,\dots,v_r) \in \prod_{k=1}^r W_{I_k}$. Hence by combining the above isomorphism with equations~\eqref{eq_Z_I} and~\eqref{eq_X_w1_wr}, the result follows.
\end{proof}
\begin{example}\label{ex2}
	Continuing Example~\ref{example_1213}, the multiplication map $p_{\mathcal I}$ induces the  isomorphisms:
\[
\begin{array}{ll}
	\Zwp{e,e} \cong BeB/B, & \Zwp{s_1,e} \cong Bs_1B/B, \\ \Zwp{s_2,e} \cong B s_2 B/B, &
	\Zwp{s_1s_2,e} \cong B s_1s_2 B/B, \\ \Zwp{s_2s_1,e} \cong B s_2s_1 B/B, & \Zwp{s_1s_2s_1,e} \cong B s_1s_2s_1 B/B, \\
		\Zwp{e,s_3} \cong Bs_3B/B, & \Zwp{s_1,s_3} \cong Bs_1s_3B/B, \\ \Zwp{s_2,s_3} \cong B s_2s_3 B/B, &
	\Zwp{s_1s_2,s_3} \cong B s_1s_2s_3 B/B, \\ \Zwp{s_2s_1,s_3} \cong B s_2s_1s_3 B/B, & \Zwp{s_1s_2s_1,s_3} \cong B s_1s_2s_1s_3B/B 
	\end{array}
\]
Since we have the decomposition $ X_{w} = \bigsqcup_{v \le w} BvB/B$, we get an isomorphism $Z_{\mathcal I} = \Zw{s_1s_2s_1,s_3} \cong X_{s_1s_2s_1s_3}$. Indeed, $\{1,2\} \cap \{3\} = \emptyset$. 
\end{example}

\begin{theorem}\label{thm:main1}
Every Schubert variety $X_{w}$ of complexity one is $\T$-equivariantly isomorphic to a generalized Bott--Samelson variety. If $X_{w}$ is of complexity one and smooth, then it is $\T$-equivariantly isomorphic to a flag Bott--Samelson variety.
\end{theorem}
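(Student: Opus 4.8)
The plan is to deduce both assertions directly from Corollary~\ref{cor_fBS_and_Schubert_isomorphic}, once the correct decomposition data has been read off from a reduced word of~$w$. First I would invoke Proposition~\ref{prop:complexity-pattern} to split the statement into two cases: either $X_w$ is smooth, in which case $w$ contains the pattern $321$ exactly once and avoids $3412$, or $X_w$ is singular, in which case $w$ contains $3412$ exactly once and avoids $321$. In the first case Theorem~\ref{thm:Daly}(1) produces a reduced decomposition
\[
w = s_{j_1}\cdots s_{j_{k-1}}\,(s_j s_{j+1} s_j)\,s_{j_{k+1}}\cdots s_{j_r}
\]
in which $s_j s_{j+1} s_j$ occurs as a factor and there are no other repeated letters; in the second case Theorem~\ref{thm:Daly}(2) produces, similarly, a reduced decomposition with the factor $s_{j+1} s_j s_{j+2} s_{j+1}$ and no other repetitions.

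From such a reduced word I would build a tuple $\mathcal I = (I_1,\dots,I_r)$ of index sets by setting $I_i = \{j_i\}$ for $i \neq k$ and letting $I_k$ be $\{j,j+1\}$ in the smooth case and $\{j,j+1,j+2\}$ in the singular case; correspondingly I put $w_i = s_{j_i} \in W_{I_i}$ for $i \neq k$ and let $w_k \in W_{I_k}$ be the repeated factor ($s_j s_{j+1} s_j$, resp.\ $s_{j+1} s_j s_{j+2} s_{j+1}$). The clause ``no other repetitions'' guarantees that every letter outside the factor occurs exactly once and is distinct from every letter inside the factor, so $I_1,\dots,I_r$ are pairwise disjoint subsets of~$[n]$, and by construction $w = w_1\cdots w_r$. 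Corollary~\ref{cor_fBS_and_Schubert_isomorphic} then applies and yields a $\T$-equivariant isomorphism $X_w = X_{w_1\cdots w_r} \cong \Zw{w_1,\dots,w_r}$, a generalized Bott--Samelson variety; this settles the first assertion (and reproduces exactly the description appearing in $(4^{\prime\prime})$ of Theorem~\ref{thm:singular} in the singular case). In the smooth case I would moreover observe that $w_k = s_j s_{j+1} s_j$ is the longest element of $W_{I_k} \cong \mathfrak{S}_3$ and each $w_i = s_{j_i}$ is the longest element of $W_{I_i} \cong \mathfrak{S}_2$, so $\Zw{w_1,\dots,w_r} = Z_{\mathcal I}$; since $P_{I_i}/B \cong \flag(\C^{|I_i|})$ is a flag manifold for every~$i$, this $Z_{\mathcal I}$ is a flag Bott--Samelson variety of the form predicted by $(4^{\prime})$ of Theorem~\ref{thm:smooth}, which settles the second assertion. (In the singular case $w_k$ has length~$4$ whereas the longest element of $W_{I_k} \cong \mathfrak{S}_4$ has length~$6$, so the construction there genuinely yields a generalized, not a flag, Bott--Samelson variety.)

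The only content beyond citing earlier results is the bookkeeping in the second paragraph: checking that ``no other repetitions'' really does force the $I_i$ to be pairwise disjoint, which is precisely the hypothesis of Corollary~\ref{cor_fBS_and_Schubert_isomorphic} (and the length-additivity $\ell(w_1\cdots w_r) = \sum_i \ell(w_i)$ that the corollary needs then follows automatically from disjointness). I expect this to be routine, since the main engine---the $\T$-equivariant identification of a Schubert variety indexed by disjoint blocks of simple reflections with the corresponding generalized Bott--Samelson variety---is already in place; the only care required is in matching the position of the special factor within the reduced word with the index~$k$ in the tuple $(I_1,\dots,I_r)$.
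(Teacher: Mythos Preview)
Your proposal is correct and follows essentially the same argument as the paper's own proof: split into the smooth and singular cases via Proposition~\ref{prop:complexity-pattern}, use Theorem~\ref{thm:Daly} to locate the repeated factor $s_j s_{j+1} s_j$ or $s_{j+1} s_j s_{j+2} s_{j+1}$ inside a reduced word, package the remaining letters and the factor into pairwise disjoint index sets $I_1,\dots,I_r$, and then invoke Corollary~\ref{cor_fBS_and_Schubert_isomorphic}. Your additional observations---that in the smooth case each $w_i$ is the longest element of $W_{I_i}$ so that $\Zw{w_1,\dots,w_r}=Z_{\mathcal I}$, and that in the singular case $w_k$ has length~$4<6$ so the resulting variety is genuinely only a generalized Bott--Samelson variety---are exactly the distinctions the paper draws as well.
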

\begin{proof}
	By Proposition~\ref{prop:complexity-pattern} and Theorem~\ref{thm:Daly}, if $X_{w}$ is smooth, then there exists a reduced decomposition $\underline{w}=s_{i_1} \cdots s_{i_{\ell}}$ of $w$ containing $s_{i}s_{i+1} s_{i}$ as a factor and no other repetitions. On the other hand, if $X_{w}$ is singular, then there exists a reduced decomposition $\underline{w}=s_{i_1}\cdots s_{i_{\ell}}$ of $w$ containing $s_{i+1}s_{i}s_{i+2} s_{i+1}$ as a factor and no other repetitions.
	Hence there is a reduced decomposition $\underline{w}$ such that 
	\begin{enumerate}
	\item if $X_{w}$ is smooth, then $(i_{q}, i_{q+1}, i_{q+2}) = (i, i+1, i)$ for some $1 \le q \le \ell-2$; and
	\item if $X_{w}$ is singular, then $(i_{q}, i_{q+1}, i_{q+2}, i_{q+3}) = (i+1,i,i+2, i+1)$ for some $1 \le q \le \ell-3$.
	\end{enumerate}
	Now we define a sequence $\mathcal I = (I_1,\dots,I_{r})$ as follows:
	\begin{enumerate}
	\item If $X_{w}$ is smooth, then $r=\ell-2$ and we set
	\[
	I_k = \begin{cases}
	\{i_k\} & \text{ if } 1 \leq k < q, \\
	\{i, i+1\} & \text{ if } k = q, \\
	\{i_{k+2}\} & \text{ if } k > q.
	\end{cases}
	\]
	\item If $X_{w}$ is singular, then $r=\ell-3$ and we set
	\[
	I_k = \begin{cases}
	\{i_k\} & \text{ if } 1 \leq k < q, \\
	\{i, i+1, i+2\} & \text{ if } k = q, \\
	\{i_{k+3}\} & \text{ if } k > q.
	\end{cases}
	\]
	\end{enumerate}
	Then, in any case, the subsets $I_1,\dots,I_{r}$ are pairwise disjoint. Moreover, if $X_{w}$ is smooth, then the concatenation of longest elements in $W_{I_k}$ is the same as~${w}$. Hence by Corollary~\ref{cor_fBS_and_Schubert_isomorphic}, the Schubert variety $X_w$ is $\T$-equivariantly isomorphic to the flag Bott--Samelson variety $Z_{\mathcal I}$.
	
	If $X_{w}$ is singular, then we set 
	\[
	w_k = \begin{cases}
	s_{i_k} & \text{ if } 1 \leq k < q, \\
	s_{i+1}s_{i}s_{i+2}s_{i+1} & \text{ if } k = q, \\
	s_{i_{k}+3} & \text{ if } q<k\leq r.
	\end{cases}
	\]
	 Then $(w_{1},\dots,w_{r})\in \prod_{k=1}^{r} W_{I_{k}}$ and $w=w_{1}\dots w_{r}.$ Hence by Corollary~\ref{cor_fBS_and_Schubert_isomorphic}, $X_{w}$ is $\T$-equivariantly isomorphic to the generalized Bott--Samelson variety $Z_{(w_{1},\dots,w_{r})}$.
\end{proof}

The above theorem proves the implications $(1^{\prime})\Rightarrow (4^{\prime})$ and $(1^{\prime\prime})\Rightarrow (4^{\prime\prime})$ in Theorems~\ref{thm:smooth} and~\ref{thm:singular}, respectively. 

The following proposition can be checked easily and we omit the proof.

\begin{proposition}
There is a natural bijection between the set of $\mathbb{T}$-fixed points in $Z_{(w_1,\dots,w_r)}$ and the product of Bruhat intervals
$$[e,w_{1}]\times\dots\times [e,w_{r}].$$
\end{proposition}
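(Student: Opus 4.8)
The plan is to exhibit the bijection explicitly and verify $\T$-invariance cell by cell, using the affine cell decomposition of $Z_{(w_1,\dots,w_r)}$ that was recalled before the statement. Recall from~\eqref{eq_Z_I} that
\[
Z_{(w_1,\dots,w_r)} = \bigsqcup_{(v_1,\dots,v_r) \atop v_k \leq w_k} Z'_{(v_1,\dots,v_r)},
\qquad
Z'_{(v_1,\dots,v_r)} = (Bv_1B \times \cdots \times Bv_rB)/\Theta \simeq \C^{\sum_k \ell(v_k)},
\]
where the union runs over $(v_1,\dots,v_r)\in\prod_{k=1}^r W_{I_k}$. Since the $\T$-action is $t\cdot[g_1,\dots,g_r]=[tg_1,g_2,\dots,g_r]$ and $\T\subseteq B$, left multiplication by $t$ preserves each factor $\overline{Bv_kB}$ and in fact each Bruhat cell $Bv_kB$; hence each cell $Z'_{(v_1,\dots,v_r)}$ is $\T$-invariant. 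So every $\T$-fixed point lies in a unique such cell, and it suffices to show that each $Z'_{(v_1,\dots,v_r)}$ contains exactly one $\T$-fixed point, namely the class $[\dot v_1, \dot v_2,\dots,\dot v_r]$ of a tuple of permutation matrix representatives.

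First I would check that $[\dot v_1,\dots,\dot v_r]$ is $\T$-fixed: for $t\in\T$ we have $t\dot v_1 = \dot v_1 (\dot v_1^{-1} t \dot v_1)$ and $\dot v_1^{-1}t\dot v_1\in\T\subseteq B$, so using the defining relation~\eqref{eq:action} of $\Theta$ with $(b_1,\dots,b_r)=(\dot v_1^{-1}t\dot v_1,\, \dot v_1^{-1}t\dot v_1,\dots)$ — more carefully, one chases the torus element through the tuple, absorbing it into the $B^r$-action stage by stage, exactly as in the classical Bott--Samelson case. This shows each product $\prod_k[e,w_k]$-indexed point is fixed. Conversely, to see these are the only fixed points, I would use that $Z'_{(v_1,\dots,v_r)}\simeq\C^{N}$ with $N=\sum_k\ell(v_k)$ as a $\T$-variety where the origin corresponds to $[\dot v_1,\dots,\dot v_r]$: the cell is an iterated affine-space bundle built from the Schubert cells $Bv_kB/B\simeq\C^{\ell(v_k)}$, and on each such factor $\T$ acts linearly with no nonzero fixed vector (the weights of $\T$ on $Bv_kB/B\cong\prod_{\al>0,\, v_k^{-1}\al<0}U_\al$-coordinates are nonzero roots), so the only fixed point of the whole cell is the origin. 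Combining, the $\T$-fixed locus of $Z_{(w_1,\dots,w_r)}$ is in natural bijection with $\bigsqcup_{(v_1,\dots,v_r)}\{\text{origin of }Z'_{(v_1,\dots,v_r)}\}$, i.e.\ with $[e,w_1]\times\cdots\times[e,w_r]$.

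The routine but slightly delicate point — and the reason the paper says it "can be checked easily and we omit the proof" — is the bookkeeping in the second half: one must set up coordinates on $Z'_{(v_1,\dots,v_r)}$ compatibly with the $\T$-action and confirm that no nonzero weight vanishes. This is standard for Bott--Samelson and generalized Bott--Samelson varieties (it is the content of the affine paving in~\cite[Proposition~2.3]{FLS}), so in the write-up I would simply point to the iterated-bundle description together with the fact that $T$-fixed points of a flag variety $P_{I_k}/B$ are the coordinate flags indexed by $W_{I_k}$, and note that a $\T$-fixed point of a tower of flag-manifold bundles must be fixed in each fiber; induction on $r$ then gives the product of fixed-point sets $\prod_k (P_{I_k}/B)^\T = \prod_k W_{I_k}$, intersected with the closed subvariety $Z_{(w_1,\dots,w_r)}$ which cuts it down to $\prod_k[e,w_k]$. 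No single step is a genuine obstacle; the only care needed is to phrase the induction so that the $\T$-action (which only touches the first coordinate directly) is correctly transported through the quotient by $\Theta$ at each stage.
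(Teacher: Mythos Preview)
Your argument is correct. The paper omits the proof entirely (``can be checked easily and we omit the proof''), and what you have written is exactly the standard verification one would supply: the cells $Z'_{(v_1,\dots,v_r)}$ are $\T$-stable, the class $[\dot v_1,\dots,\dot v_r]$ is fixed because permutation matrices normalize~$\T$, and uniqueness follows either from the linear $\T$-action on the affine cell with nonzero weights or, equivalently, by the iterated-bundle induction you outline at the end. Either packaging is fine; nothing further is needed.
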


If a Schubert variety $X_{w}$ and a generalized Bott--Samelson variety~$Z_{(w_{1},\dots,w_{r})}$ are isomorphic as $\T$-varieties, then they have the same set of $\mathbb{T}$-fixed points. Hence we get the following.

\begin{corollary}\label{cor_Xw_BS_iso}
If a Schubert variety $X_{w}$ is isomorphic to a generalized Bott--Samelson variety~$Z_{(w_{1},\dots,w_{r})}$ as $\mathbb{T}$-varieties, then the Bruhat interval $[e,w]$ is isomorphic to the product of Bruhat intervals $\prod_{k=1}^{r}[e,w_{k}]$.
\end{corollary}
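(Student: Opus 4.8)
The plan is to recover the Bruhat order on $[e,w]$, and the product order on $\prod_{k=1}^{r}[e,w_k]$, as intrinsic invariants of the $\T$-variety structures of $X_w$ and $\Zw{w_1,\dots,w_r}$, and then to observe that a $\T$-equivariant isomorphism is forced to preserve these invariants. The invariant in question is the closure order of a Bialynicki--Birula decomposition. Together with the bijection $(X_w)^\T \leftrightarrow [e,w]$ recalled in Section~\ref{sec:pattern-complexity} and the bijection $(\Zw{w_1,\dots,w_r})^\T \leftrightarrow \prod_{k=1}^r[e,w_k]$ of the preceding proposition, this upgrades the mere set bijection coming from a $\T$-equivariant isomorphism to an isomorphism of posets.

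First I would fix a generic one-parameter subgroup $\lambda\colon \Cstar \to \T$, i.e.\ one with $\langle \lambda,\alpha\rangle \neq 0$ for every root $\alpha$, so that the $\lambda(\Cstar)$-fixed locus on $G/B$ equals $(G/B)^\T$. Given a $\T$-equivariant isomorphism $\phi\colon X_w \stackrel{\sim}{\longrightarrow} \Zw{w_1,\dots,w_r}$, the same $\lambda$ is generic on $\Zw{w_1,\dots,w_r}$. For a complete $\T$-variety $Y$ with $Y^\T$ finite, write $Y=\bigsqcup_{y\in Y^\T} Y^+_y$ for the Bialynicki--Birula decomposition with respect to $\lambda$, where $Y^+_y=\{x\in Y\mid \lim_{t\to 0}\lambda(t)\cdot x = y\}$, and put $y'\preceq y$ iff $Y^+_{y'}\subseteq\overline{Y^+_y}$. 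Since $\phi$ intertwines the two $\lambda$-actions, it carries $(X_w)^+_{uB}$ onto $(\Zw{w_1,\dots,w_r})^+_{\phi(uB)}$, so $\phi|_{(X_w)^\T}$ is an isomorphism of posets $((X_w)^\T,\preceq)\to ((\Zw{w_1,\dots,w_r})^\T,\preceq)$.

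It then remains to identify these two posets with the ones in the statement. For $X_w$ this is classical: for a suitable sign of $\lambda$, the attracting set $(X_w)^+_{uB}$ is the Schubert cell $BuB/B$, whose closure is $X_u=\bigsqcup_{v\leq u}BvB/B$, so $vB\preceq uB$ iff $v\leq u$, giving $((X_w)^\T,\preceq)\cong [e,w]$. For $\Zw{w_1,\dots,w_r}$, the attracting set of the fixed point indexed by $(v_1,\dots,v_r)\in\prod_{k=1}^r[e,w_k]$ is the affine cell $\Zwp{v_1,\dots,v_r}$ of the affine paving of a (flag, resp.\ generalized) Bott--Samelson variety from~\cite{FLS}; since $\Zwp{v_1,\dots,v_r}$ is dense in the closed subvariety $\Zw{v_1,\dots,v_r}$, its closure is $\Zw{v_1,\dots,v_r}$, and \eqref{eq_Z_I} applied to the sequence $(v_1,\dots,v_r)$ gives $\Zw{v_1,\dots,v_r}=\bigsqcup_{v'_k\leq v_k\ \forall k}\Zwp{v'_1,\dots,v'_r}$. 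Hence $(v'_1,\dots,v'_r)\preceq (v_1,\dots,v_r)$ iff $v'_k\leq v_k$ for all $k$, so $((\Zw{w_1,\dots,w_r})^\T,\preceq)\cong\prod_{k=1}^r[e,w_k]$ with the product order. Composing the three poset isomorphisms yields $[e,w]\cong\prod_{k=1}^r[e,w_k]$.

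The single point that requires care — and the only real obstacle — is the claim that the cells $\Zwp{v_1,\dots,v_r}$ are exactly the Bialynicki--Birula attracting sets for the chosen $\lambda$; the rest is formal. I would verify this from the iterated bundle description of $\Zw{w_1,\dots,w_r}$: each stage is a partial flag bundle with fiber $\flag(\C^{|I_k|})$, on which the relevant $\lambda$ has the Schubert cells as attracting sets, and these attracting behaviours assemble compatibly along the tower. Filtrability of the decomposition is not an issue, since \eqref{eq_Z_I} already exhibits each cell closure as a union of cells. (When $I_1,\dots,I_r$ are pairwise disjoint one may even bypass this step: by Corollary~\ref{cor_fBS_and_Schubert_isomorphic} one reduces to the Schubert variety $X_{w_1\cdots w_r}$, for which the $X$-side computation above applies, and disjointness of the $I_k$ makes its Bruhat interval factor as $\prod_{k=1}^r[e,w_k]$.)
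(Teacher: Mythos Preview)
Your approach via the Bialynicki--Birula closure order is more careful than the paper's argument, which simply observes that a $\T$-equivariant isomorphism induces a bijection on fixed-point sets and invokes the preceding proposition. As written, the paper's argument yields only a bijection of \emph{sets}, whereas the applications $(4')\Rightarrow(5')$ and $(4'')\Rightarrow(5'')$ need a \emph{poset} isomorphism; your idea of recovering the order from the closure relation among attracting cells is exactly how one upgrades the bijection.

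That said, your central claim---that the cells $\Zwp{v_1,\dots,v_r}$ coincide with the Bialynicki--Birula attracting cells for a dominant $\lambda$---fails without a disjointness hypothesis on the $I_k$. Take $Z_{(s_1,s_1)}$ for $\GL_2$: the $\T$-action on the fibre over $[s_1]$ is the standard action twisted by $t\mapsto s_1^{-1}ts_1$, which is anti-dominant, so the attracting set of $eB$ in that fibre is the big opposite cell, not the point. A tangent-weight count gives $\dim BB_Z([s_1,e])=2$ while $\dim\Zwp{s_1,e}=1$; the open BB cell is centred at $[s_1,e]$, not at $[s_1,s_1]$. Your iterated-bundle sketch (``the relevant $\lambda$ has the Schubert cells as attracting sets'') breaks precisely because the twist by $v_1\cdots v_{k-1}$ can flip dominance on the simple roots in $I_k$.

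When $I_1,\dots,I_r$ are pairwise disjoint this objection disappears: for $v\in W_{I_1\cup\cdots\cup I_{k-1}}$ and $i\in I_k$ the root $v\alpha_i$ remains positive, so the twisted $\lambda$ stays dominant on each fibre and your BB argument goes through. Your parenthetical bypass via Corollary~\ref{cor_fBS_and_Schubert_isomorphic} also works and is the cleanest route for the uses in Theorems~\ref{thm:smooth} and~\ref{thm:singular}, where disjointness is built into $(4')$ and $(4'')$. For the corollary in the generality stated, however, neither your sketch nor the paper's one-line argument suffices.
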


The above corollary shows the implications $(4^{\prime})\Rightarrow (5^{\prime})$ and $(4^{\prime\prime})\Rightarrow (5^{\prime\prime})$ in Theorems~\ref{thm:smooth} and~\ref{thm:singular}, respectively. 

\section{Bruhat intervals and Bruhat interval polytopes}\label{sec:moment polytopes}

In this section, we study the properties of the Bruhat interval~$[e,w]$ and the Bruhat interval polytope~$Q_{e,w}$ for a permutation~$w$ with $c(w)\leq 1$. We will complete proofs of Theorems~\ref{thm:smooth} and~\ref{thm:singular}.

Recall that the following seven permutations in $\mathfrak{S}_4$ 
$$1432,3214,4132,4213,2431,3241,3412$$
satisfy $[321;3412](w)=1$ (in Example~\ref{ex:complexity one-S4}).
One can easily check that 
\begin{itemize}
\item If $w$ is either $1432$ or $3214$, then the Bruhat interval $[e,w]$ is isomorphic to $\mathfrak{S}_3$ and the Bruhat interval polytope $Q_{e,w}$ is combinatorially equivalent to the hexagon $\Perm_2$.
\item If $w$ is one of the permutations $4132,4213,2431$ and $3241$, then the Bruhat interval $[e,w]$ is isomorphic to $\mathfrak{S}_3\times \mathfrak{B}_1$ and the Bruhat interval polytope $Q_{e,w}$ is combinatorially equivalent to the hexagonal prism, $\Perm_2\times I$.
\item If $w=3412$, then the Bruhat interval $[e,w]$ and $Q_{e,w}$ are given in Figure~\ref{fig:BIP-3412}. Thus $[e,w]$ is isomorphic to neither $\mathfrak{S}_3\times \mathfrak{B}_1$ nor $\mathfrak{B}_{4}$, and $Q_{e,w}$ is combinatorially equivalent to neither the hexagonal prism nor a $4$-cube.
\end{itemize}
Here, $\mathfrak{B}_{\ell}$ denotes the Boolean algebra of length~$\ell$. The above situation happens in general and we have the following lemma that is obvious but plays a role in our argument.

\begin{figure}[b]
	\begin{subfigure}[c]{0.49\textwidth}
		\begin{tikzpicture}
		\tikzset{every node/.style={font=\footnotesize}}
			\tikzset{red node/.style = {fill=red!20!white, draw=red!75!white}}
	\tikzset{red line/.style = {line width=1ex, red,nearly transparent}}	
		\matrix [matrix of math nodes,column sep={0.48cm,between origins},
		row sep={1cm,between origins},
		nodes={circle, draw=blue!50,fill=blue!20, thick, inner sep = 0pt , minimum size=1.2mm}]
		{
			& & & & & \node[label = {above:{4321}}] (4321) {} ; & & & & & \\
			& & & 
			\node[label = {above left:4312}] (4312) {} ; & & 
			\node[label = {above left:4231}] (4231) {} ; & & 
			\node[label = {above right:3421}] (3421) {} ; & & & \\
			& \node[label = {above left:4132}] (4132) {} ; & & 
			\node[label = {left:4213}] (4213) {} ; & & 
			\node[label = {above:3412}, red node] (3412) {} ; & & 
			\node[label = {[label distance = 0.1cm]0:2431}] (2431) {} ; & & 
			\node[label = {above right:3241}] (3241) {} ; & \\
			\node[label = {left:1432}, red node] (1432) {} ; & & 
			\node[label = {left:4123}] (4123) {} ; & & 
			\node[label = {[label distance = 0.01cm]180:2413}, red node] (2413) {} ; & & 
			\node[label = {[label distance = 0.01cm]0:3142}, red node] (3142) {} ; & & 
			\node[label = {right:2341}] (2341) {} ; & & 
			\node[label = {right:3214}, red node] (3214) {} ; \\
			& \node[label = {below left:1423}, red node] (1423) {} ; & & 
			\node[label = {[label distance = 0.1cm]182:1342}, red node] (1342) {} ; & & 
			\node[label = {below:2143}, red node] (2143) {} ; & & 
			\node[label = {right:3124}, red node] (3124) {} ; & & 
			\node[label = {below right:2314}, red node] (2314) {} ; & \\
			& & & \node[label = {below left:1243}, red node] (1243) {} ; & & 
			\node[label = {[label distance = 0.01cm]190:1324}, red node] (1324) {} ; & & 
			\node[label = {below right:2134}, red node] (2134) {} ; & & & \\
			& & & & & \node[label = {below:1234}, red node] (1234) {} ; & & & & & \\
		};
		
		\draw (4321)--(4312)--(4132)--(1432)--(1423)--(1243)--(1234)--(2134)--(2314)--(2341)--(3241)--(3421)--(4321);
		\draw (4321)--(4231)--(4132);
		\draw (4231)--(3241);
		\draw (4231)--(2431);
		\draw (4231)--(4213);
		\draw (4312)--(4213)--(2413)--(2143)--(3142)--(3241);
		\draw (4312)--(3412)--(2413)--(1423)--(1324)--(1234);
		\draw (3421)--(3412)--(3214)--(3124)--(1324);
		\draw (3421)--(2431)--(2341)--(2143)--(2134);
		\draw (4132)--(4123)--(1423);
		\draw (4132)--(3142)--(3124)--(2134);
		\draw (4213)--(4123)--(2143)--(1243);
		\draw (4213)--(3214);
		\draw (3412)--(1432)--(1342)--(1243);
		\draw (2431)--(1432);
		\draw (2431)--(2413)--(2314);
		\draw (3142)--(1342)--(1324);
		\draw (4123)--(3124);
		\draw (2341)--(1342);
		\draw (2314)--(1324);
		\draw (3412)--(3142);
		\draw (3241)--(3214)--(2314);
		
		\draw[red line] (1234)--(1243)--(2143)--(2413);
		\draw[red line] (1234)--(2134)--(2143)--(3142);
		\draw[red line] (1243)--(1423);
		\draw[red line] (1243)--(1342);
		\draw[red line] (1234)--(1324);
		\draw[red line] (3124)--(2134)--(2314);
		\draw[red line] (1324)--(1423)--(1432)--(3412);
		\draw[red line] (1324)--(2314)--(3214)--(3412);
		\draw[red line] (1324)--(1342)--(1432);
		\draw[red line] (2314)--(2413)--(3412);
		\draw[red line] (1423)--(2413);
		\draw[red line] (1342)--(3142)--(3412);
		\draw[red line] (1324)--(3124)--(3214);
		\draw[red line] (3124)--(3142);
		\end{tikzpicture}
	\end{subfigure}
	\begin{subfigure}[c]{0.49\textwidth}
	\begin{tikzpicture}[scale=6]
		\tikzset{every node/.style={draw=blue!50,fill=blue!20, circle, thick, inner sep=1pt,font=\footnotesize}}
\tikzset{red node/.style = {fill=red!20!white, draw=red!75!white}}
\tikzset{red line/.style = {line width=0.3ex, red, nearly opaque}}	
	\coordinate (3142) at (1/3, 1/2, 1/6); 
	\coordinate (4231) at (2/3, 1/2, 1/6); 
	\coordinate (4312) at (5/6, 2/3, 1/2); 
	\coordinate (4321) at (5/6, 1/2, 1/3); 
	\coordinate (3421) at (5/6, 1/3, 1/2); 
	\coordinate (4213) at (2/3, 5/6, 1/2); 
	\coordinate (1324) at (1/3, 1/2, 5/6); 
	\coordinate (2413) at (2/3, 1/2, 5/6); 
	\coordinate (3412) at (5/6, 1/2, 2/3); 
	\coordinate (2314) at (1/2, 2/3, 5/6); 
	\coordinate (4123) at (1/2, 5/6, 1/3); 
	\coordinate (4132) at (1/2, 2/3, 1/6); 
	\coordinate (3214) at (1/2, 5/6, 2/3); 
	\coordinate (3124) at (1/3, 5/6, 1/2); 
	\coordinate (2431) at (2/3, 1/6, 1/2); 
	\coordinate (1432) at (1/2, 1/6, 2/3); 
	\coordinate (1423)  at (1/2, 1/3, 5/6); 
	\coordinate (1342)  at (1/3, 1/6, 1/2); 
	\coordinate (2341) at (1/2, 1/6, 1/3); 
	\coordinate (3241) at (1/2, 1/3, 1/6);
	\coordinate (1243) at (1/6, 1/3, 1/2); 
	\coordinate (2143) at (1/6, 1/2, 1/3);  
	\coordinate (1234) at (1/6, 1/2, 2/3); 
	\coordinate (2134) at (1/6, 2/3, 1/2); 
	\draw[thick, draw=blue!70] (4213)--(4312)--(3412)--(2413)--(2314)--(3214)--cycle;
	\draw[thick, draw=blue!70] (4312)--(4321)--(3421)--(3412);
	\draw[thick, draw=blue!70] (3421)--(2431)--(1432)--(1423)--(2413);
	\draw[thick, draw=blue!70] (1423)--(1324)--(2314);
	\draw[thick, draw=blue!70] (1432)--(1342)--(1243)--(1234)--(1324);
	\draw[thick, draw=blue!70] (1234)--(2134)--(3124)--(3214);
	\draw[thick, draw=blue!70] (3124)--(4123)--(4213);	
	\draw[thick, draw=blue!70, dashed] (2134)--(2143)--(3142)--(4132)--(4123);
	\draw[thick, draw=blue!70, dashed] (2143)--(1243);
	\draw[thick, draw=blue!70, dashed] (3142)--(3241)--(2341)--(1342);
	\draw[thick, draw=blue!70, dashed] (2341)--(2431);
	\draw[thick, draw=blue!70, dashed] (3241)--(4231)--(4132);
	\draw[thick, draw=blue!70, dashed] (4231)--(4321);	
\node [label = {[label distance = 0cm]left:1234}, red node] at (1234) {};
\node[label = {[label distance = 0cm]left:1243}, red node] at (1243) {};
\node[label = {[label distance = 0cm]right:1324}, red node] at (1324) {};
\node[label = {[label distance = 0cm]left:1342}, red node] at (1342) {};
\node [label = {[label distance = 0cm]above:1423},red node] at (1423) {};
\node[label = {[label distance = -0.2cm]below:1432}, red node] at (1432) {};
\node [label = {[label distance = 0cm]left:2134}, red node] at (2134) {};
\node[label = {[label distance = -0.1cm]below right:2143}, red node] at (2143) {};
\node[label = {[label distance = 0cm]below:2314}, red node] at (2314) {};
\node[label = {[label distance = 0cm]right:2341}] at (2341) {};
\node[label = {[label distance = 0cm]left:2413}, red node] at (2413) {};
\node[label = {[label distance = -0.2cm]below:2431}] at (2431) {};
\node[label = {[label distance = -0.2cm]above:3124}, red node] at (3124) {};
\node[label = {[label distance = -0.2cm]above:3142}, red node] at (3142) {};
\node[label = {[label distance = -0.2cm]above:3214}, red node] at (3214) {};
\node [label = {[label distance = -0.1cm]above:3241}] at (3241) {};
\node[label = {[label distance = 0cm]below left:3412}, red node] at (3412) {};
\node[label = {[label distance = 0cm]right:3421}] at (3421) {};
\node[label = {[label distance = -0.2cm]above:4123}] at (4123) {};
\node [label = {[label distance = 0cm]below:4132}] at (4132) {};
\node[label = {[label distance = 0cm]right:4213}] at (4213) {};
\node[label = {[label distance = 0cm]left:4231}] at (4231) {};
\node[label = {[label distance = 0cm]right:4312}] at (4312) {};
\node [label = {[label distance = 0cm]right:4321}] at (4321) {};
	\draw[red line] (3214)--(2314)--(2413)--(3412)--cycle;
	\draw[red line] (2314)--(1324)--(1423)--(2413);
	\draw[red line] (1234)--(1243)--(1342)--(1432)--(1423);
	\draw[red line] (1432)--(3412);
	\draw[red line] (1324)--(1234)--(2134)--(3124);
	\draw[red line](3124)--(3214);
	\draw[red line, dashed] (3124)--(3142)--(2143)--(1243);
	\draw[red line, dashed] (3142)--(3412);
	\draw[red line, dashed] (2134)--(2143);
	\draw[red line, dashed] (3142)--(1342);
	\end{tikzpicture}
\end{subfigure}
\caption{The Bruhat interval $[e,3412]$ and the Bruhat interval polytope $Q_{e,3412}$.}\label{fig:BIP-3412}
\end{figure}
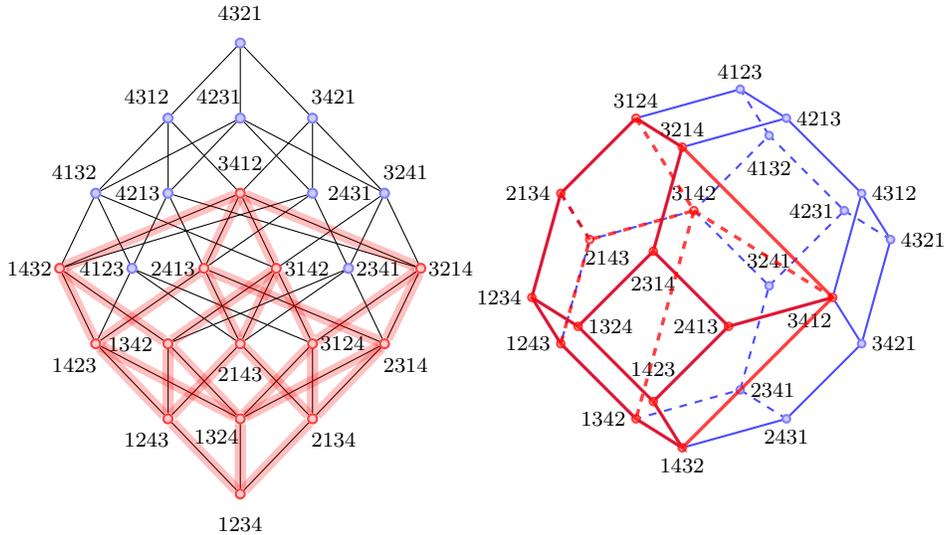

\begin{lemma}\label{lem:repeated-elements}
For any positive integer $i$, the following hold:
\begin{enumerate}
\item if $w=s_is_{i+1}s_i$, then $[e,w]$ is isomorphic to $\mathfrak{S}_3$ as a poset, and $Q_{e,w}$ is combinatorially equivalent to the hexagon $\Perm_2$; and
\item if $w=s_{i+1}s_is_{i+2}s_{i+1}$, then $[e,w]$ is isomorphic to $[e,3412]$ as a poset, and $Q_{e,w}$ is combinatorially equivalent to $Q_{e,3412}$.
\end{enumerate}
\end{lemma}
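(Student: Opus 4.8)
The plan is to reduce both statements to the case $i=1$ by a length- and order-preserving relabeling, and then simply to read off the interval and the polytope. First I would record the elementary fact that $v\leq w$ forces every letter of a reduced word for $v$ to occur in a reduced word for $w$, so $\supp(v)\subseteq\supp(w)$ for every $v\in[e,w]$. Writing $k=2$ in case~(1) and $k=3$ in case~(2), we have $\supp(w)=\{i,i+1,\dots,i+k-1\}$; hence every $v\in[e,w]$ is a product of $s_i,\dots,s_{i+k-1}$, so $v$ permutes the block $J\coloneqq\{i,i+1,\dots,i+k\}$ and fixes every position outside $J$. Therefore $[e,w]\subseteq\mathfrak{S}_J$, and since the Bruhat order on the parabolic subgroup $\mathfrak{S}_J$ is the restriction of the Bruhat order on $\mathfrak{S}_n$, the interval $[e,w]$ is the same whether computed in $\mathfrak{S}_n$ or in $\mathfrak{S}_J$.

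Next I would transport everything along the order-preserving bijection $\{1,\dots,k+1\}\to J$, $a\mapsto a+i-1$, which induces a group isomorphism $\sigma\colon\mathfrak{S}_J\to\mathfrak{S}_{k+1}$ sending $s_{i+a-1}$ to $s_a$ and hence carrying the Coxeter generators of $\mathfrak{S}_J$ onto the simple reflections of $\mathfrak{S}_{k+1}$. Such a $\sigma$ preserves length and Bruhat order, so it restricts to a poset isomorphism $[e,w]\cong[e,\sigma(w)]$. In case~(1), $\sigma(w)=s_1s_2s_1$ is the longest element of $\mathfrak{S}_3$, so $[e,\sigma(w)]=\mathfrak{S}_3$; in case~(2), $\sigma(w)=s_2s_1s_3s_2=3412$, using $R(3412)=\{s_2s_3s_1s_2,\,s_2s_1s_3s_2\}$ recalled in Section~\ref{sec:preliminaries}, so $[e,\sigma(w)]=[e,3412]$. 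This proves the poset assertions.

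For the Bruhat interval polytopes, every $u\in[e,w]$ fixes all positions outside $J$, so all the defining points $(u(1),\dots,u(n))$ of $Q_{e,w}$ agree in the coordinates indexed by $[n]\setminus J$; thus $Q_{e,w}$ lies in an affine subspace on which the projection onto the coordinates indexed by $J$ (a map to $\R^{k+1}$) is an affine isomorphism, and under it $Q_{e,w}$ is carried onto $\Conv\{(u(i),\dots,u(i+k))\mid u\in[e,w]\}$. Subtracting $i-1$ from each coordinate identifies each $(u(i),\dots,u(i+k))$ with the one-line notation of $\sigma(u)$, so the image is precisely $Q_{e,\sigma(w)}$. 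Hence $Q_{e,w}$ is affinely isomorphic, and in particular combinatorially equivalent, to $Q_{e,\sigma(w)}$: to $Q_{e,w_0}=\Perm_2$, the hexagon, in case~(1), and to $Q_{e,3412}$ in case~(2). Every step is routine bookkeeping with subwords and coordinates, so I do not expect a genuine obstacle; the only points demanding a little care are the fact that the Bruhat order restricts to the parabolic subgroup $\mathfrak{S}_J$ (so that no element of $\mathfrak{S}_n\setminus\mathfrak{S}_J$ slips into $[e,w]$) and the reduced-word identity $s_2s_1s_3s_2=3412$, both of which are immediate from the definitions recalled in Section~\ref{sec:preliminaries}.
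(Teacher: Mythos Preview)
Your proof is correct. The paper does not actually supply a proof of this lemma: it simply declares the statement ``obvious'' and moves on. Your argument---restricting to the parabolic subgroup $\mathfrak{S}_J$ via $\supp(v)\subseteq\supp(w)$, then applying the index-shifting Coxeter isomorphism $\sigma\colon\mathfrak{S}_J\to\mathfrak{S}_{k+1}$ to reduce to the base cases $321\in\mathfrak{S}_3$ and $3412\in\mathfrak{S}_4$, and finally projecting the polytope onto the $J$-coordinates---is exactly the routine verification the authors had in mind, carried out with appropriate care.
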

Therefore, if $w=s_{i+1}s_is_{i+2}s_{i+1}$ for some positive integer~$i$, then $[e,w]$ is
isomorphic to neither $\mathfrak{S}_3\times \mathfrak{B}_1$ nor $\mathfrak{B}_{4}$, and $Q_{e,w}$ is combinatorially equivalent to neither the hexagonal prism nor a $4$-cube.

\begin{proposition}\label{thm:bruhat-interval}
For a permutation $w$ in $\mathfrak{S}_n$, the following hold:
\begin{enumerate}
\item If the Bruhat interval $[e,w]$ is isomorphic to $\mathfrak{S}_3\times \mathfrak{B}_{\ell(w)-3}$ \textup{(}as a poset\textup{)}, then $X_{w}$ is smooth and of complexity one.
\item If the Bruhat interval $[e,w]$ is isomorphic to $[e,3412]\times\mathfrak{B}_{\ell(w)-4}$ \textup{(}as a poset\textup{)}, then $X_{w}$ is singular and of complexity one.
\end{enumerate}
\end{proposition}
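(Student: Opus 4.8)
The plan is to extract the complexity of $X_w$ directly from the product decomposition of $[e,w]$, invoke the pattern characterizations of Section~\ref{sec:pattern-complexity} to split into a smooth and a singular case, and then use the forward implications $(1^{\prime})\Rightarrow(5^{\prime})$ and $(1^{\prime\prime})\Rightarrow(5^{\prime\prime})$ already proved (Theorem~\ref{thm:main1} together with Corollary~\ref{cor_Xw_BS_iso}), plus a one-line cardinality count, to select the correct case under each hypothesis.

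First I would count atoms. For posets $P$ and $Q$ with minimum elements, the atoms of $P\times Q$ are exactly the elements $(a,\hat 0_Q)$ with $a$ an atom of $P$ together with the elements $(\hat 0_P,b)$ with $b$ an atom of $Q$, so the number of atoms of $P\times Q$ is the sum of the numbers of atoms of $P$ and of $Q$. Since $\supp(w)$ is precisely the set of atoms of $[e,w]$, and since $\mathfrak{S}_3$ has the two atoms $s_1,s_2$, the interval $[e,3412]$ has the three atoms $s_1,s_2,s_3$ (because $\supp(3412)=\{1,2,3\}$), and $\mathfrak{B}_m$ has $m$ atoms, in either case of the proposition we obtain $|\supp(w)|=\ell(w)-1$, hence $c(w)=\ell(w)-|\supp(w)|=1$. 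By Theorem~\ref{Tenner-rep} this forces $[321;3412](w)=1$, so by Proposition~\ref{prop:complexity-pattern} exactly one of the following holds: $X_w$ is smooth, with $w$ containing exactly one $321$-pattern and avoiding $3412$; or $X_w$ is singular, with $w$ containing exactly one $3412$-pattern and avoiding $321$.

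It remains to rule out the wrong alternative. The key observation is that the two candidate posets have different cardinalities, hence are non-isomorphic: writing $\ell=\ell(w)$, the poset $\mathfrak{S}_3\times\mathfrak{B}_{\ell-3}$ has $6\cdot 2^{\ell-3}$ elements, whereas $[e,3412]\times\mathfrak{B}_{\ell-4}$ has $14\cdot 2^{\ell-4}=7\cdot 2^{\ell-3}$ elements (the value $|[e,3412]|=14$ is read off from Figure~\ref{fig:BIP-3412}), and $6\neq 7$. Now assume the hypothesis of (1), namely $[e,w]\cong\mathfrak{S}_3\times\mathfrak{B}_{\ell-3}$. If $X_w$ were singular of complexity one, then the chain $(1^{\prime\prime})\Rightarrow(4^{\prime\prime})\Rightarrow(5^{\prime\prime})$ would force $[e,w]\cong[e,3412]\times\mathfrak{B}_{\ell-4}$, contradicting the hypothesis by the non-isomorphism just noted; therefore $X_w$ is smooth of complexity one. (When $\ell=3$ the singular case cannot occur anyway, since a singular Schubert variety of complexity one contains the pattern $3412$ and so has length at least $4$.) The hypothesis of (2) is treated symmetrically: if $X_w$ were smooth of complexity one, then $(1^{\prime})\Rightarrow(4^{\prime})\Rightarrow(5^{\prime})$ would force $[e,w]\cong\mathfrak{S}_3\times\mathfrak{B}_{\ell-3}$, again contradicting the hypothesis; therefore $X_w$ is singular of complexity one.

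I do not expect a genuine obstacle, since all the heavy lifting has been done earlier; the only points needing care are the elementary description of atoms in a product poset and the bookkeeping value $|[e,3412]|=14$. The one place where one might wish for more is self-containedness: one could try to argue directly that $[e,w]\cong\mathfrak{S}_3\times\mathfrak{B}_{\ell(w)-3}$ forces $w$ to avoid $3412$ (and dually), but this seems to require reconstructing the structural analysis of $[e,w]$ already packaged in Theorem~\ref{thm:main1}, so passing through the established implications is the cleanest route.
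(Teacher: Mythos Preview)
Your proposal is correct. For part~(2) your argument is essentially the paper's: both count atoms to get $c(w)=1$, note that the two candidate product posets are non-isomorphic, and then use the already established implication $(1')\Rightarrow(5')$ to rule out the smooth alternative. For part~(1), however, the routes diverge. The paper does \emph{not} invoke $(1'')\Rightarrow(5'')$; instead it argues directly that if $X_w$ were singular of complexity one, then by Theorem~\ref{thm:Daly} some $v=s_{i+1}s_is_{i+2}s_{i+1}$ satisfies $v\le w$, so $[e,v]$ is a subinterval of $[e,w]$; but (via Lemma~\ref{lem:repeated-elements}) this rank-$4$ interval is isomorphic to neither $\mathfrak{S}_3\times\mathfrak{B}_1$ nor $\mathfrak{B}_4$, while every subinterval of $\mathfrak{S}_3\times\mathfrak{B}_{\ell-3}$ has one of these two shapes. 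Your cardinality trick ($6\cdot 2^{\ell-3}\neq 7\cdot 2^{\ell-3}$) is shorter and pleasantly symmetric between the two cases, at the price of leaning on the forward implication $(1'')\Rightarrow(5'')$ from Theorem~\ref{thm:main1} and Corollary~\ref{cor_Xw_BS_iso}; the paper's subinterval argument is more self-contained for part~(1) but needs the (easy) classification of intervals in $\mathfrak{S}_3\times\mathfrak{B}_m$.
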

Note that the converse of each statement in the above proposition is also true by Theroem~\ref{thm:main1} and Corollary~\ref{cor_Xw_BS_iso}.
\begin{proof}[Proof of Proposition~\ref{thm:bruhat-interval}]
Let us prove the first statement.
Assume that the Bruhat interval $[e,w]$ is isomorphic to a poset $\mathfrak{S}_3\times \mathfrak{B}_{\ell-3}$. Note that
\begin{enumerate}
\item $\mathfrak{S}_3\times\mathfrak{B}_{\ell-3}$ has $\ell-1$ atoms;
\item $\mathfrak{S}_3\times\mathfrak{B}_{\ell-3}$ has rank $\ell$; and
\item every subinterval of $\mathfrak{S}_3\times\mathfrak{B}_{\ell-3}$ is isomorphic to $\mathfrak{S}_3\times\mathfrak{B}_{k}$ ($k<\ell-3$) or $\mathfrak{B}_{k'}$ ($k'\leq \ell-1$).
\end{enumerate}
From (1) and (2) above, we get $c(w)=1$. If $X_w$ is not smooth, then $w$ contains $3412$ exactly once and avoids $321$. Then by Theorem~\ref{thm:Daly}, there is a reduced decomposition $\underline{w}$ of $w$ such that
\begin{equation*}
\underline{w}=s_{i_1}\cdots s_{i_{k-1}} s_{i_{k+1}} s_{i_k} s_{i_{k+2}} s_{i_{k+1}} s_{i_{k+3}}\cdots s_{i_{\ell-1}}\text{ and }i_{k+1}=i_k+1,\,i_{k+2}=i_k+2.
\end{equation*} 
However, the interval $[e, s_{i_k+1} s_{i_k} s_{i_k+2} s_{i_k+1}]$  is a subinterval of neither $\mathfrak{S}_3\times\mathfrak{B}_{\ell-3}$ nor $\mathfrak{B}_{\ell}$ by (3). Therefore, $X_w$ is smooth.

Now we prove the second statement. Note that $[e,3412]\times \mathfrak{B}^{\ell-4}$ and $\mathfrak{S}_{3}\times\mathfrak{B}^{\ell-3}$. Since we already showed that $X_{w}$ is smooth and $c(w)=1$ if and only if $[e,w]$ is isomorphic to $\mathfrak{S}_{3}\times \mathfrak{B}_{\ell-3}$ in the above, it is enough to show that $c(w)=1$. Since the poset $[e,3412]\times \mathfrak{B}_{\ell-4}$ has $\ell-1$ atoms and is of rank $\ell$,
if $[e,w]$ is isomorphic to $[e,3412]\times \mathfrak{B}_{\ell-4}$, then we get $c(w)=1$. This proves the proposition.
\end{proof} 

Therefore, the first five statements in Theorems~\ref{thm:smooth} and~\ref{thm:singular} are equivalent.

Now we determine the combinatorial type of the Bruhat interval polytope $Q_{e,w}$ when $c(w)=1$.
The combinatorial aspects of Bruhat interval polytopes are well-studied in~\cite{ts-wi}. Every face of a Bruhat interval polytope is itself a Bruhat interval polytope. However, for a subinterval $[x,y]$ of an interval $[v,w]$, $Q_{x,y}$ may not be a face of $Q_{v,w}$. For a subinterval $[x,y]$ of $[v,w]$, we introduce a directed graph $G_{x,y}^{v,w}$ which will be used to determine whether $Q_{x,y}$ is a face of $Q_{v,w}$.

Let $v\leq w$ in $\mathfrak{S}_n$. For $u\in [v,w]$, we define
\begin{align*}
\overline{T}(u,[v,w])&\coloneqq\{(i,j)\in T\mid u< u(i,j)\leq w,\,\ell(u(i,j))-\ell(u)=1\}, \\
\underline{T}(u,[v,w])&\coloneqq\{(i,j)\in T\mid v\leq u(i,j)< u,\,\ell(u)-\ell(u(i,j))=1\}, 
\end{align*} where $T$ is the set of transpositions in $\mathfrak{S}_n$, see~\eqref{eq:def_of_T}.
We first construct a labelled graph $G_{x,y}$ on $[n]=\{1,\dots,n\}$ having an edge between the vertices $a$ and $b$ if and only if $(a,b)\in \overline{T}(x,[x,y])$.

\begin{theorem}\cite{ts-wi}\label{thm:face-graph}
Let $[x,y]\subseteq [v,w]$. We define the graph $G_{x,y}^{v,w}$ as follows:
\begin{enumerate}
\item The vertices of $G_{x,y}^{v,w}$ are $\{1,2,\dots,n\}$, with vertices $i$ and $j$ identified if they are in the same connected component of the graph $G_{x,y}$.
\item There is a directed edge $i\to j$ if $(i,j)\in \overline{T}(y,[v,w])$.
\item There is a directed edge $j\to i$ if $(i,j)\in \underline{T}(x,[v,w])$.
\end{enumerate}
Then the Bruhat interval polytope $Q_{x,y}$ is a face of the Bruhat interval polytope $Q_{v,w}$ if and only if the graph $G_{x,y}^{v,w}$ is a directed acyclic graph.
\end{theorem}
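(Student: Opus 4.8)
The plan is to convert the geometric statement into the solvability of a system of strict linear inequalities in a weight vector, and then to recognize that solvability as acyclicity of $G_{x,y}^{v,w}$. Since the vertices of $Q_{v,w}$ are exactly the points $(u(1),\dots,u(n))$ with $u\in[v,w]$ (Proposition~\ref{vertices_of_BIP}), and a subpolytope whose vertices form a set is a face of $Q_{v,w}$ iff that set is the set of maximizers of some linear functional, $Q_{x,y}$ is a face of $Q_{v,w}$ precisely when there is a vector $c=(c_1,\dots,c_n)\in\R^n$ such that $\langle c,\cdot\rangle\colon u\mapsto\sum_k c_k u(k)$ attains its maximum over $[v,w]$ exactly on $[x,y]$. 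The elementary mechanism is the effect of a covering transposition: if $u\lessdot u(a,b)$ with $a<b$ then $\langle c,u(a,b)\rangle-\langle c,u\rangle=(c_a-c_b)(u(b)-u(a))$ and $u(a)<u(b)$, whereas if $u(a,b)\lessdot u$ with $a<b$ the same identity holds but now $u(a)>u(b)$. Applying this to the three families of moves entering the definition of $G_{x,y}^{v,w}$, a maximizing $c$ must satisfy $c_a=c_b$ for every $(a,b)\in\overline{T}(x,[x,y])$ (a move inside $[x,y]$ keeps the functional constant), $c_a<c_b$ for every $(a,b)\in\overline{T}(y,[v,w])$ (a move up from $y$ strictly decreases it), and $c_a>c_b$ for every $(a,b)\in\underline{T}(x,[v,w])$ (a move down from $x$ strictly decreases it); in other words $c$ is constant on the connected components of $G_{x,y}$ and strictly increasing along every directed edge of $G_{x,y}^{v,w}$.

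For the converse I would quote two structural facts about Bruhat interval polytopes from \cite{ts-wi}: every face of $Q_{v,w}$ is again a subinterval polytope $Q_{x',y'}$, and $\langle c,\cdot\rangle$ is constant on $Q_{x,y}$ if and only if $c$ is constant on the connected components of $G_{x,y}$ (equivalently, the affine hull of $Q_{x,y}$ is cut out by holding $\sum_{i\in C}p_i$ constant for each component $C$). Granting these, suppose $c$ is constant on the components of $G_{x,y}$ and satisfies $c_a<c_b$ on $\overline{T}(y,[v,w])$ and $c_a>c_b$ on $\underline{T}(x,[v,w])$. Then $\langle c,\cdot\rangle$ is constant on $[x,y]$, so its maximizing face $Q_{x',y'}$ contains $[x,y]$; if it were strictly larger, then $y'>y$ would produce a covering step $y\lessdot y''\le y'$ belonging to $\overline{T}(y,[v,w])$ along which the functional is nondecreasing (both $y$ and $y''$ lie in the maximizing face), contradicting $c_a<c_b$ there, and symmetrically $x'<x$ is impossible. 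Hence the maximizing face is exactly $[x,y]$.

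Combining the two directions, $Q_{x,y}$ is a face of $Q_{v,w}$ if and only if there is a real-valued function on the vertex set of the contracted graph $G_{x,y}^{v,w}$ that strictly increases along every directed edge. Such a function exists if and only if $G_{x,y}^{v,w}$ has no directed cycle: a topological order of a directed acyclic graph supplies one, whereas a directed cycle $i_1\to\cdots\to i_m\to i_1$ — in particular a loop created when a directed edge joins two vertices already identified in passing from $G_{x,y}$ to $G_{x,y}^{v,w}$ — forces the impossible chain $c_{i_1}<\cdots<c_{i_m}<c_{i_1}$. This is the assertion of the theorem.

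The step I expect to be the main obstacle is the second structural input above, together with the description of the $1$-skeleton of a Bruhat interval polytope underlying it: one must show that forcing $\langle c,\cdot\rangle$ to be constant only along the covering moves out of the bottom vertex $x$ inside $[x,y]$ already forces it constant on all of $[x,y]$, and — for the forward direction — that testing the covering moves out of just the two extreme vertices $x$ and $y$ suffices to decide whether the maximizing face is precisely $[x,y]$. These facts rest on the lifting-property analysis of Bruhat intervals carried out in \cite{ts-wi}; by contrast the passage from the inequality system to acyclicity is routine graph theory.
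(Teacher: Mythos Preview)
The paper does not give its own proof of this theorem: it is quoted verbatim from \cite{ts-wi}, and the paper only adds a Remark noting that the original proof relied on \cite[Lemma~4.18]{ts-wi}, that this lemma had a gap, and that the gap was closed in \cite{CDM}. So there is no ``paper's proof'' to compare against beyond that remark.

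Your sketch is in fact the argument of \cite{ts-wi}: translate ``$Q_{x,y}$ is a face'' into the existence of a linear functional maximized exactly on $[x,y]$, read off from the covering relations the equalities $c_a=c_b$ for $(a,b)\in\overline{T}(x,[x,y])$ and the strict inequalities coming from $\overline{T}(y,[v,w])$ and $\underline{T}(x,[v,w])$, and observe that the resulting system is solvable iff $G_{x,y}^{v,w}$ is acyclic. You have also put your finger precisely on the delicate point. The step ``$c$ constant on the connected components of $G_{x,y}$ implies $\langle c,\cdot\rangle$ constant on all of $[x,y]$'' is exactly the content of \cite[Lemma~4.18]{ts-wi}, and it is \emph{this} lemma whose original proof was flawed and was later repaired by Caselli--D'Adderio--Marietti \cite{CDM} via the weak generalized lifting property. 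So your proposal is correct as a proof outline, and your self-assessment of where the real work lies is accurate; but you should be aware that the structural input you propose to quote is nontrivial and that a direct appeal to \cite{ts-wi} alone would reproduce the gap the paper's Remark warns about.
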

\begin{remark}
The above theorem used Lemma 4.18 in~\cite{ts-wi}. Unfortunately, the proof of the lemma had a gap. Recently, the gap has been corrected by Caselli, D’Adderio and Marietti, see \cite[Remarks 5.5]{CDM}.
\end{remark}

\begin{example}
Let $w=1432$, $x=1324$, and $y=1342$. Then we have $\overline{T}(x,[x,y])=\{(3,4)\}$, 
$\overline{T}(y,[e,w])=\{(2,3)\}$, and
$\underline{T}(x,[e,w])=\{(2,3)\}$. Hence $G_{x,y}^{e,w}$ is not acyclic, and thus $Q_{x,y}$ is not a face of $Q_{e,w}$. See Figure~\ref{fig:ex-graph}.
\end{example}

\begin{figure}[ht]
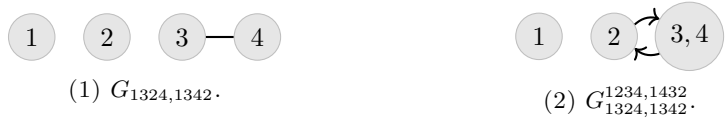

\begin{subfigure}[c]{0.49\textwidth}
\centering
 \tikz{
\node[draw=gray!50,fill=gray!20] (a) at (0,0) [circle] {$1$};
\node[draw=gray!50,fill=gray!20] (b) at (1,0) [circle] {$2$};
\node[draw=gray!50,fill=gray!20] (c) at (2,0) [circle] {$3$};
\node[draw=gray!50,fill=gray!20] (d) at (3,0) [circle] {$4$};
\draw (c) edge[thick] (d);
}
\subcaption{$G_{1324,1342}$.}
\end{subfigure}
\begin{subfigure}[c]{0.49\textwidth}
\centering
 \tikz{
\node[draw=gray!50,fill=gray!20] (a) at (0,0) [circle] {$1$};
\node[draw=gray!50,fill=gray!20] (b) at (1,0) [circle] {$2$};
\node[draw=gray!50,fill=gray!20] (c) at (2,0) [circle] {$3,4$};
\draw (c) edge[->,thick, bend left] (b);
\draw (b) edge[->,thick, bend left] (c);
}
\subcaption{$G_{1324,1342}^{1234,1432}$.}
\end{subfigure}
\caption{Examples of $G_{x,y}$ and $G_{x,y}^{v,w}$.}
\label{fig:ex-graph}
\end{figure}

We will use the following fact.  Let $a,b\in \mathfrak{S}_n$ which do not have $s_r$ in their supports.  Then we have that
\begin{equation}\label{eq:equiv}
a\le b\quad\iff\quad s_ra\le s_rb\quad\iff\quad as_r\le bs_r.
\end{equation}
In particular, 
\begin{equation}\label{eq:length}
\ell(b)-\ell(a)=\ell(s_rb)-\ell(s_ra)=\ell(bs_r)-\ell(as_r).
\end{equation}
Hence
\begin{equation}\label{eq:upper}
\overline{T}(a,[a,b])=\overline{T}(s_ra,[s_ra,s_rb]).
\end{equation}
For $\widehat{w}=s_rw$ or $ws_r$, in order to see which subinterval of $[v,\widehat{w}]$ gives a face of $Q_{v,\widehat{w}}$, we prepare the following lemma.
\begin{lemma}\label{lemm-1}
Let $u,v,w\in \mathfrak{S}_n$ with $v\le u\le w$ and $s_r\notin \supp(w)$.  Then 
\begin{enumerate}[itemsep=3pt]
\item $\underline{T}(u,[v,s_rw])=\underline{T}(u,[v,w])$ and $\underline{T}(u,[v,ws_r])=\underline{T}(u,[v,w])$; 
\item $\overline{T}(u,[v,s_rw])=\overline{T}(u,[v,w])\cup\{(u^{-1}(r),u^{-1}(r+1))\}$;
\item $\overline{T}(u,[v,ws_r])=\overline{T}(u,[v,w])\cup\{(r,r+1)\}$; 
\item  $\overline{T}(s_ru,[v,s_rw])=\overline{T}(u,[v,w])$; 
\item $\underline{T}(s_ru,[v,s_rw])=\underline{T}(u,[v,w])\cup\{(u^{-1}(r),u^{-1}(r+1))\}$;
\item $\overline{T}(us_r,[v,ws_r])=\{(s_r(i),s_r(j))\mid (i,j)\in\overline{T}(u,[v,w])\}$; and 
\item $\underline{T}(us_r,[v,ws_r])=\{(s_r(i),s_r(j))\mid (i,j)\in\underline{T}(u,[v,w])\}\cup\{(r,r+1)\}$. 
\end{enumerate}\end{lemma}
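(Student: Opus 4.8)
The plan is to prove the seven identities by working through them in the order listed, since later parts reuse the bookkeeping from earlier ones; each is a direct computation with the definitions of $\overline{T}$ and $\underline{T}$ together with the elementary facts~\eqref{eq:equiv}, \eqref{eq:length}, \eqref{eq:upper}. The key structural input is that $s_r\notin\supp(w)$ forces $w(r)<w(r+1)$ (equivalently $s_rw>w$ and $ws_r>w$ in Bruhat order), and more generally, for any $u$ with $u\le w$, that $s_r\notin\supp(u)$ as well, so $u(r)<u(r+1)$. First I would record this observation and the two covering relations it yields: $u\lessdot s_ru$ via the transposition $(u^{-1}(r),u^{-1}(r+1))$ applied on the left (i.e. $s_ru=u\cdot(u^{-1}(r),u^{-1}(r+1))$), and $u\lessdot us_r$ via $(r,r+1)$ applied on the right. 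These are exactly the extra transpositions that will appear in (2), (3), (5), (7).

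For part~(1), the point is that passing from $[v,w]$ to $[v,s_rw]$ or $[v,ws_r]$ only enlarges the interval \emph{upward}, while $\underline{T}(u,[v,w])$ only constrains things by the lower bound $v$ and by covers \emph{below} $u$; since $v$ is unchanged and $u$ is unchanged, and any $u'\lessdot u$ with $u'\ge v$ lies in $[v,w]$ already (as $u\le w$), the set is unchanged — I would spell this out using that $u'\le u\le w$ forces $u'\le w$, hence membership in the smaller interval. For part~(2), an element $(i,j)\in\overline{T}(u,[v,s_rw])$ satisfies $u\lessdot u(i,j)\le s_rw$; the new possibilities relative to $[v,w]$ are precisely those $u(i,j)$ with $u(i,j)\le s_rw$ but $u(i,j)\not\le w$, and I claim the only such cover of $u$ is $u(i,j)=s_ru$, realized by $(i,j)=(u^{-1}(r),u^{-1}(r+1))$: indeed if $u\lessdot x\le s_rw$ and $x\not\le w$ then $s_rx\le w$ (using that $x\not\le w$, $s_rw>w$ forces $x=s_r\cdot s_rx$ with $s_rx<x$, and then lifting property of the Bruhat order on the interval bounded by $w$, cf.~\eqref{eq:equiv}), so $s_rx\le w<s_rw$ gives $x\le s_rw$ automatically and $s_rx$ covered below $x$; combined with $u\lessdot x$ and $s_r\notin\supp(u)$ one forces $x=s_ru$. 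Part~(3) is the mirror image with the right multiplication, where the exceptional cover is $us_r$, realized by $(i,j)=(r,r+1)$ acting on the right; here no conjugation of the index by $s_r$ is needed because the transposition $(r,r+1)$ is $s_r$-stable.

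Parts~(4) and~(5) compute $\overline{T}$ and $\underline{T}$ at the shifted point $s_ru$ inside $[v,s_rw]$. For~(4): a cover $s_ru\lessdot y\le s_rw$; left-multiplying by $s_r$ (legitimate by~\eqref{eq:equiv} since $s_r\notin\supp(s_rw)=\supp(w)$ and, crucially, $s_r\notin\supp(u)$ so that $s_ru$ sits just above $u$), covers correspond bijectively except that the "new" cover $s_ru\lessdot u$ going downward is absorbed — I would match $\overline{T}(s_ru,[v,s_rw])$ with $\overline{T}(u,[v,w])$ by sending a transposition $(i,j)$ to itself, checking the length increment is preserved by~\eqref{eq:length} and that $u\le y' := s_r y \le w$. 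Part~(5) is then the companion statement: the extra element of $\underline{T}(s_ru,[v,s_rw])$ compared to $\underline{T}(u,[v,w])$ is exactly the downward cover $s_ru\lessdot u$... wait, that is upward; rather $u\lessdot s_ru$, so $(u^{-1}(r),u^{-1}(r+1))\in\underline{T}(s_ru,\cdot)$, and all other lower covers of $s_ru$ are of the form $s_r$ times a lower cover of $u$, which by the index-unchanged matching of~(4)–(5) are recorded by the same transpositions. Parts~(6) and~(7) are the right-handed analogues of~(4)–(5); the only new feature is that right multiplication by $s_r$ conjugates positions, so a transposition $(i,j)$ recording a cover of $u$ becomes $(s_r(i),s_r(j))$ when recording the corresponding cover of $us_r$ — this is because $(us_r)(s_r(i),s_r(j)) = u(i,j)s_r$ — and the single extra lower cover of $us_r$ is $us_r\gtrdot$... again $u\lessdot us_r$ via $(r,r+1)$ on the right, contributing $(r,r+1)$ to $\underline{T}(us_r,[v,ws_r])$, which is $s_r$-stable so needs no conjugation.

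The main obstacle I anticipate is \emph{verifying that the "new" cover is the \emph{only} new one} in parts~(2), (3), (5), (7): the easy direction (the listed transposition does belong) is immediate, but ruling out any other cover of $u$ (resp.\ $s_ru$, $us_r$) that lies under $s_rw$ (resp.\ $ws_r$) but not under $w$ requires the lifting/exchange property of the Bruhat order relative to a fixed top element, essentially the statement that if $a\le c$, $s_r\notin\supp(c)$, $s_rc>c$, and $a'\le s_rc$ with $s_ra'<a'$, then $s_ra'\le c$. I would isolate this as a short sublemma (it follows from~\eqref{eq:equiv} applied on the interval $[e,s_rc]$, or from the standard "Property Z" / subword characterization of Bruhat order together with $s_r\notin\supp(c)$), prove it once, and invoke it uniformly. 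Everything else is careful but routine bookkeeping, and I would present~(1) fully and then indicate that~(2)–(7) follow the same template, writing out the index-conjugation check in~(6) since that is the one place the combinatorics is not completely transparent.
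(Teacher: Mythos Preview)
Your proposal is correct and follows essentially the same route as the paper: both arguments reduce each item to the dichotomy ``does the cover in question land in $[e,w]$ or in $s_r[e,w]$ (resp.\ $[e,w]s_r$)?'', the paper phrasing this as ``$s_r\in\supp(u(i,j))$ or not'' and you phrasing it via the lifting property, and both handle~(6)--(7) through the conjugation identity $(us_r)(s_r(i),s_r(j))=u(i,j)s_r$. One slip to fix: in your discussion of~(4) you write $s_r\notin\supp(s_rw)=\supp(w)$, but in fact $\supp(s_rw)=\supp(w)\cup\{r\}$; what you actually need (and what~\eqref{eq:equiv} requires) is $s_r\notin\supp(w)$ and $s_r\notin\supp(u(i,j))$, the latter following because $s_r\in\supp(s_ru)\subseteq\supp(s_ru(i,j))$ forces $s_ru(i,j)\in s_r[e,w]$, hence $u(i,j)\in[e,w]$.
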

\begin{proof}
The first statement is clear from the definition of $\underline{T}(u,[v,w])$ and the assumption $s_r \notin \supp(w)$. In (2)$\sim$(5), the inclusion~($\supseteq$) is clear and hence it is enough to check the inclusion~($\subseteq$). 

In (2) and (3), for $(i,j)\in\overline{T}(u,[v,s_rw])$ (respectively, $(i,j)\in\overline{T}(u,[v,ws_r])$), if $s_r\in\supp(u(i,j))$, then $u(i,j)=s_{r}u=u(u^{-1}(r),u^{-1}(r+1))$ (respectively, $u(i,j)=us_r=u(r,r+1)$); otherwise, $(i,j)\in \overline{T}(u,[v,w])$. This proves (2) and (3). 

In (4) and (5), we use~\eqref{eq:equiv} and~\eqref{eq:length}.  For $(i,j)\in\overline{T}(s_{r}u,[v,s_{r}w])$ in (4), since $s_ru<s_ru(i,j)\leq s_rw$ and $\ell(s_ru(i,j))-\ell(s_ru)=1$,
we have $u < u(i,j)\leq w$ and $\ell(u(i,j))-\ell(u)=1$. This proves that $(i,j)\in\overline{T}(u,[v,w])$.  In~(5), if $(i,j)\in\underline{T}(s_{r}u,[v,s_{r}w])$, then  $s_{r}u(i,j)=u$ or $s_{r}v\leq s_{r}u(i,j) <s_{r}u$. Note that $s_ru(u^{-1}(r),u^{-1}(r+1))=u$. Thus if $s_{r}u(i,j)=u$, then $(i,j)=(u^{-1}(r),u^{-1}(r+1))$; otherwise, $(i,j)\in \underline{T}(u,[v,w])$.

Let us prove (6). 
It follows from~\eqref{eq:equiv} and~\eqref{eq:length} that there exists $z\in [us_r, ws_r]$ such that $\ell(z)-\ell(us_r)=1$ if and only if there exists $z'\in [u,w]$ such that $z=z's_r$ and $\ell(z')-\ell(u)=1$. Here, $z'$ is of the form
$z'=u(i,j)$ for some $(i,j)\in \overline{T}(u,[v,w])$. Since $z=u(i,j)s_r=us_r(s_r(i),s_r(j))$, we get
$$\overline{T}(us_r,[v,ws_r])\supseteq \{(s_r(i),s_r(j))\mid (i,j)\in\overline{T}(u,[v,w])\}.$$
 On the other hand, if $z's_r=us_r(i',j')$ for $(i',j')\in \overline{T}(us_r,[v,ws_r])$, then $z'=us_r(i',j')s_r=u(s_r(i'),s_r(j'))$ and $z’\leq w$. Hence there exists $(i,j)\in \overline{T}(u,[v,w])$ such that $(i’,j’)=(s_r(i),s_r(j))$.
This proves (6).

Finally, we prove (7). If $(i,j)\in \underline{T}(us_{r},[v,ws_{r}])$, then $us_{r}(i,j)=u$ or $vs_{r}\leq us_{r}(i,j)<us_{r}$. If  $us_{r}(i,j)=u$, then $(i,j)=(r,r+1)$. If $vs_{r}\leq us_{r}(i,j)<us_{r}$, then there exists  $u'\in [v,w]$ such that $us_{r}(i,j)=u’s_{r}$ and $u'< u$. Then $u’=us_{r}(i,j)s_{r}=u(s_{r}(i),s_{r}(j))$. Note that $$\ell(u)-\ell(u')=\ell(us_r)-\ell(u's_r)=\ell(us_r)-\ell(us_r(i,j))=1.$$ Hence $(s_{r}(i),s_{r}(j))\in\underline{T}(u,[v,w])$. Thus, we get
$$\underline{T}(us_{r},[v,ws_{r}])\subseteq \{(s_{r}(i),s_{r}(j))\mid (i,j)\in\underline{T}(u,[v,w])\}\cup\{(r,r+1)\}.$$
On the other hand, $(r,r+1)\in \underline{T}(us_{r},[v,ws_{r}])$ clearly. For $(i,j)\in \underline{T}(u,[v,w])$, since $v\leq u(i,j)<u$ and $\ell(u)-\ell(u(i,j))=1$, we get $vs_r\leq u(i,j)s_r<us_r$ and $\ell(us_r)-\ell(u(i,j)s_r)=1$ by~\eqref{eq:equiv} and~\eqref{eq:length}. Note that $u(i,j)s_r=us_r(s_r(i),s_r(j))$. Therefore, $(s_r(i),s_r(j))\in \underline{T}(us_r,[v,ws_r]).$ This proves (7).
\end{proof}

\begin{proposition}\label{prop:product}
For $v,w$ in $\mathfrak{S}_n$ with $v\leq w$, if $s_r\not\in\supp(w)$, then both $Q_{v,s_rw}$ and $Q_{v,ws_r}$ are combinatorially equivalent to the polytope $Q_{v,w}\times I$.
\end{proposition}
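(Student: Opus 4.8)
The plan is to prove the statement for $\widehat{w}=s_rw$ in detail; the case $\widehat{w}=ws_r$ is completely parallel, using parts~(3),~(6),~(7) of Lemma~\ref{lemm-1} in place of parts~(2),~(4),~(5). Since $s_r\notin\supp(w)$ we have $\ell(s_rw)=\ell(w)+1$, and because supports only shrink along the Bruhat order, $s_r\notin\supp(u)$ for every $u\le w$; hence every such $u$ lies in the parabolic subgroup generated by $\{s_i\mid i\ne r\}$, so $u(\{1,\dots,r\})=\{1,\dots,r\}$ setwise. By the lifting property of the Bruhat order one obtains a decomposition
\[
[v,s_rw]=[v,w]\ \sqcup\ s_r[v,w],\qquad s_r[v,w]=[s_rv,s_rw],
\]
where the two parts are disjoint (the elements of the second have $s_r$ in their support, those of the first do not), and $u\mapsto s_ru$ restricts to a poset isomorphism from $[v,w]$ onto $[s_rv,s_rw]$, preserving and reflecting lengths by~\eqref{eq:equiv}--\eqref{eq:length}. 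Consequently every subinterval $[x',y']\subseteq[v,s_rw]$ is of exactly one of three kinds: \emph{low}, $[x',y']=[x,y]$ with $[x,y]\subseteq[v,w]$; \emph{high}, $[x',y']=[s_rx,s_ry]$ with $[x,y]\subseteq[v,w]$; or \emph{mixed}, $[x',y']=[x,s_ry]$ with $[x,y]\subseteq[v,w]$ (for the last, the lifting property again gives $[x,s_ry]=[x,y]\sqcup s_r[x,y]$, with minimum $x$ and maximum $s_ry$).

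First I would identify the two ``end'' faces. Because $u(\{1,\dots,r\})=\{1,\dots,r\}$ for $u\le w$, the linear functional $\sigma(x)=x_1+\cdots+x_r$ takes the constant value $\binom{r+1}{2}$ on the points $(u(1),\dots,u(n))$ with $u\in[v,w]$ and the constant value $\binom{r+1}{2}+1$ on the points coming from $s_r[v,w]$. Thus $\{\sigma=\binom{r+1}{2}\}$ and $\{\sigma=\binom{r+1}{2}+1\}$ are supporting hyperplanes of $Q_{v,s_rw}$ cutting out exactly $Q_{v,w}$ and $Q_{s_rv,s_rw}$, so these are opposite faces whose vertex sets partition that of $Q_{v,s_rw}$ (recall the vertices of a Bruhat interval polytope $Q_{v,w}$ are exactly the points $(u(1),\dots,u(n))$ for $v\le u\le w$, cf.~\cite{ts-wi}); under the final identification with $Q_{v,w}\times I$ these will play the roles of $Q_{v,w}\times\{0\}$ and $Q_{v,w}\times\{1\}$.

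By Theorem~\ref{thm:face-graph} it then remains to decide, for each $[x,y]\subseteq[v,w]$, when the graphs $G_{x,y}^{v,s_rw}$, $G_{s_rx,s_ry}^{v,s_rw}$, $G_{x,s_ry}^{v,s_rw}$ are acyclic. Combining~\eqref{eq:upper} with Lemma~\ref{lemm-1}, a direct computation shows that, relative to $G_{x,y}^{v,w}$: the first acquires exactly one new directed edge $y^{-1}(r)\to y^{-1}(r+1)$ (Lemma~\ref{lemm-1}(1),(2)); the second acquires exactly one new directed edge $x^{-1}(r+1)\to x^{-1}(r)$ (Lemma~\ref{lemm-1}(4),(5) and~\eqref{eq:upper}); and the third is obtained by additionally identifying the two vertices $x^{-1}(r)$ and $x^{-1}(r+1)$ (Lemma~\ref{lemm-1}(1),(2),(4) and~\eqref{eq:upper}).

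The heart of the argument, and the step I expect to be the main obstacle, is the observation that $G_{x,y}^{v,w}$ has \emph{no} edge joining a vertex of $\{1,\dots,r\}$ to a vertex of $\{r+1,\dots,n\}$: every transposition $(a,b)$ occurring in its definition produces a permutation ($x(a,b)$ or $y(a,b)$) that is $\le w$, hence preserves $\{1,\dots,r\}$ setwise, which forces $a$ and $b$ onto the same side of the partition $\{1,\dots,r\}\sqcup\{r+1,\dots,n\}$. Therefore $G_{x,y}^{v,w}$ is the disjoint union of a graph on $\{1,\dots,r\}$ and a graph on $\{r+1,\dots,n\}$; and since $y^{-1}(r),x^{-1}(r)\le r<r+1\le y^{-1}(r+1),x^{-1}(r+1)$, each of the three modifications above merely adds a single edge between the two pieces, or identifies two vertices lying in different pieces, so none of them can create a directed cycle. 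Hence each of the three graphs is acyclic if and only if $G_{x,y}^{v,w}$ is (the ``only if'' being immediate, since deleting an edge or un-identifying two vertices cannot destroy a cycle). It follows that the nonempty faces of $Q_{v,s_rw}$ are in bijection with (nonempty faces of $Q_{v,w}$)$\times\{\textup{low},\textup{high},\textup{mixed}\}$; checking that this bijection respects inclusion is a routine comparison of vertex sets (a face of a Bruhat interval polytope is a Bruhat interval polytope, determined by its vertices), and it then matches the face poset of $Q_{v,w}\times I$, giving the desired combinatorial equivalence. The case $\widehat{w}=ws_r$ goes through verbatim once one replaces, in the relabeled edges produced by parts~(6),~(7) of Lemma~\ref{lemm-1}, the partition $\{1,\dots,r\}\sqcup\{r+1,\dots,n\}$ by the partition obtained from it by swapping the two indices $r$ and $r+1$; all the new edges and identifications are again concentrated at the $r\mid r+1$ interface and cannot close a cycle.
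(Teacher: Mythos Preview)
Your argument is correct and follows essentially the same route as the paper: both proofs reduce to Theorem~\ref{thm:face-graph}, use Lemma~\ref{lemm-1} to describe how $G_{x,y}^{v,\widehat w}$ differs from $G_{\tilde x,\tilde y}^{v,w}$ in each of the three cases (your low/high/mixed are exactly the paper's cases (i)/(ii)/(iii)), and hinge on the same key observation that $G_{\tilde x,\tilde y}^{v,w}$ has no edge crossing the partition $[r]\sqcup([n]\setminus[r])$. Your explicit identification of the two ``end'' facets via the linear functional $\sigma=x_1+\cdots+x_r$ and your final face-poset bookkeeping are a bit more spelled out than in the paper, and your handling of $ws_r$ is more compressed (the paper tracks the $r\leftrightarrow r+1$ relabeling carefully in Case~2(ii)), but the substance is the same.
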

\begin{proof}

Note that for $u\in\mathfrak{S}_n$ with $u\le s_rw$ (respectively, $u\le ws_r$), if $s_r\in \supp(u)$, then there is a unique $u'$ such that $s_ru'=u$ (respectively, $u's_r=u$)  because $s_r\notin \supp(w)$.  We set
\[
\tilde{u}=\begin{cases} u \quad& \text{if $s_r\notin\supp(u)$},\\
u'\quad&\text{if $s_r\in\supp(u)$}\end{cases}\qquad\text{and}\qquad \widehat{w}=s_rw\text{ or } ws_r.
\]
Then what we have to prove is that for any $[x,y]\subseteq [v,\widehat{w}]$,
\begin{quote} 
$Q_{x,y}$ is a face of  $Q_{v,\widehat{w}}$ 
if and only if $Q_{\tilde{x},\tilde{y}}$ is a face of $Q_{v,w}$,
\end{quote}
which is equivalent to 
\begin{quote}
$(*)$\qquad  $G_{x,y}^{v,\widehat{w}}$
is acyclic if and only if $G_{\tilde{x},\tilde{y}}^{v,w}$ is acyclic
\end{quote}
by Theorem~\ref{thm:face-graph}. We will prove $(*)$ in the following. 

Note that for each $[x,y]\subseteq [v,\widehat{w}]$, there are three possibilities:
\begin{enumerate}
\item[(i)] $(\tilde{x},\tilde{y})=(x,y)$, i.e., $s_r\not\in\supp(y)$,
\item[(ii)] $(\tilde{x},\tilde{y})=(x',y')$, i.e., $s_r\in\supp(x)$, and
\item[(iii)] $(\tilde{x},\tilde{y})=(x,y')$, i.e., $s_r\not\in\supp(x)$ but $s_r\in\supp(y)$.
\end{enumerate}
Since $s_r\not\in \supp(w)$, the graph $G_{\tilde{x},\tilde{y}}$ has no directed edge between $[r]$ and $[n]\setminus[r]$. We prove $(*)$ by showing that $G_{x,y}^{v,\widehat{w}}$ is the directed graph $G_{\tilde{x},\tilde{y}}^{v,w}$ with one directed edge added between $[r]$ and $[n]\backslash [r]$ in (i) and (ii)  and with a pair of vertices in $[r]$ and $[n]\backslash [r]$ identified in (iii). 

\medskip
\noindent\underline{\textbf{Case 1: $\widehat{w}=s_rw$.}}\qquad
Note that in cases (i) and (ii), $\overline{T}(x,[x,y])=\overline{T}(\tilde{x},[\tilde{x},\tilde{y}])$ by~\eqref{eq:upper}. Hence the vertex set of $G_{x,y}^{v,\widehat{w}}$ is the same as that of $G_{\tilde{x},\tilde{y}}^{v,w}$. 
In case (i), it follows from (1) and (2) of Lemma~\ref{lemm-1} that 
\[\underline{T}(x,[v,s_rw])=\underline{T}(\tilde{x},[v,w])
\]
and
\[
\overline{T}(y,[v,s_rw])=\overline{T}(\tilde{y},[v,w])\cup\{(y^{-1}(r),y^{-1}(r+1))\}.
\]
Hence the graph $G_{x,y}^{v,\widehat{w}}$ is obtained from $G_{\tilde{x},\tilde{y}}^{v,w}$ by adding a directed edge from $y^{-1}(r)\in[r]$ to $y^{-1}(r+1)\in[n]\setminus[r]$.
In case (ii), it follows from (4) and (5) of Lemma~\ref{lemm-1} that
\[
\overline{T}(y,[v,s_rw])=\overline{T}(\tilde{y},[v,w])
\]
 and 
 \[
 \underline{T}(x,[v,s_rw])=\underline{T}(\tilde{x},[v,w])\cup\{((\tilde{x})^{-1}(r),(\tilde{x})^{-1}(r+1))\}.
\]
Hence the graph $G_{x,y}^{v,\widehat{w}}$ is obtained from $G_{\tilde{x},\tilde{y}}^{v,w}$ by adding a directed edge from $(\tilde{x})^{-1}(r+1)\in[n]\setminus[r]$ to $(\tilde{x})^{-1}(r)\in[r]$.
In case (iii), by (1) and (4) of Lemma~\ref{lemm-1}, we have
\[
\underline{T}(x,[v,s_rw])=\underline{T}(\tilde{x},[v,w])\text{ and } \overline{T}(y,[v,s_rw])=\overline{T}(\tilde{y},[v,w]).
\]
Since
\[
\overline{T}(x,[x,y])=\overline{T}(\tilde{x},[\tilde{x},\tilde{y}])\cup\{(x^{-1}(r),x^{-1}(r+1))\}
\]
by Lemma~\ref{lemm-1}(2),
the graph $G_{x,y}^{v,\widehat{w}}$ is obtained from $G_{\tilde{x},\tilde{y}}^{v,w}$ by identifying the vertices $x^{-1}(r)\in[r]$ and $x^{-1}(r+1)\in[n]\setminus[r]$. This proves Case 1.

\medskip
\noindent\underline{\textbf{Case 2: $\widehat{w}=ws_r$.}}\qquad
In case (i), since $\overline{T}(x,[x,y])=\overline{T}(\tilde{x},[\tilde{x},\tilde{y}])$, the vertex set of $G_{x,y}^{v,\widehat{w}}$ is the same as that of $G_{\tilde{x},\tilde{y}}^{v,w}$. 
It follows from (1) and (3) of Lemma~\ref{lemm-1} that
\[\underline{T}(x,[v,ws_r])=\underline{T}(\tilde{x},[v,w])
\] and
\[
\overline{T}(y,[v,ws_r])=\overline{T}(\tilde{y},[v,w])\cup\{(r,r+1)\}.
\] Hence
the graph $G_{x,y}^{v,\widehat{w}}$ is obtained from $G_{\tilde{x},\tilde{y}}^{v,w}$ by adding a directed edge from $r$ to $r+1$.
In case (ii), 
since $\tilde{x}(i,j)s_r=\tilde{x}s_r(s_r(i),s_r(j))=x(s_r(i),s_r(j))$, we get
$$\overline{T}(x,[x,y])=\{(s_r(i),s_r(j))\mid (i,j)\in \overline{T}(\tilde{x},[\tilde{x},\tilde{y}])\}$$
similarly to the proof of (6) in Lemma~\ref{lemm-1}.
Hence $G_{x,y}$ is obtained from $G_{\tilde{x},\tilde{y}}$ by interchanging the labelling of the vertices $r$ and $r+1$.
By (6) and (7) of Lemma~\ref{lemm-1}, we have 
\[
\overline{T}(y,[v,ws_r])=\{(s_r(i),s_r(j))\mid (i,j)\in\overline{T}(\tilde{y},[v,w])\}
\]
 and 
 \[
 \underline{T}(x,[v,ws_r])=\{(s_r(i),s_r(j))\mid (i,j)\in\underline{T}(\tilde{x},[v,w])\}\cup\{(r,r+1)\}.
\]
Thus
the graph $G_{x,y}^{v,\widehat{w}}$ is obtained from $G_{\tilde{x},\tilde{y}}^{v,w}$ by 
interchanging the labelling of the vertices $r$ and $r+1$ and then adding a directed edge from $r+1$ to $r$. In case (iii), it follows from (1) and (6) of Lemma~\ref{lemm-1} that
\[\underline{T}(x,[v,ws_r])=\underline{T}(\tilde{x},[v,w])\] and 
\[\overline{T}(y,[v,ws_r])=\{(s_r(i),s_r(j))\mid (i,j)\in\overline{T}(\tilde{y},[v,w])\}.\]
Note that 
$$\overline{T}(x,[x,y])=\overline{T}(\tilde{x},[\tilde{x},\tilde{y}])\cup\{(r,r+1)\}$$
by Lemma~\ref{lemm-1}(3). Hence the graph $G_{x,y}$ is obtained from $G_{\tilde{x},\tilde{y}}$ by adding an edge between the vertices $r$ and $r+1$. Therefore, the graph $G_{x,y}^{v,\widehat{w}}$ is obtained from $G_{\tilde{x},\tilde{y}}^{v,w}$ by identifying the vertices $r$ and $r+1$.
\end{proof}

The above proposition implies that $Q_{e,w}$ is combinatorially equivalent to the cube $I^{\ell(w)}$ if $X_w$ is of complexity zero, i.e., a reduced decomposition of~$w$ consists of distinct letters.

\begin{theorem}\label{thm:poly-comb}
For a permutation $w$ in $\mathfrak{S}_n$, the following hold:
\begin{enumerate}
\item the Schubert variety  $X_w$  is smooth and of complexity one if and only if $Q_{e,w}$ is combinatorially equivalent to the polytope $\Perm_2\times I^{\ell(w)-3}$, and
\item the Schubert variety~$X_{w}$ is singular and of complexity one if and only if $Q_{e,w}$ is combinatorially equivalent to the polytope $Q_{e,3412}\times I^{\ell(w)-4}$.
\end{enumerate}
\end{theorem}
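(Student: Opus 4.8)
The plan is to deduce both directions of each equivalence from Proposition~\ref{prop:product}, which splits off a cube factor from $Q_{v,w}$ whenever $w$ is multiplied (on either side) by a simple reflection outside its support. For the ``only if'' direction of~(1), suppose $X_w$ is smooth and of complexity one. By Proposition~\ref{prop:complexity-pattern} and Theorem~\ref{thm:Daly} there is a reduced decomposition $\underline{w}=s_{i_1}\cdots s_{i_{q-1}}\,(s_is_{i+1}s_i)\,s_{i_{q+3}}\cdots s_{i_\ell}$, where $\ell=\ell(w)$, in which the letters $i_1,\dots,i_{q-1},i_{q+3},\dots,i_\ell$ together with $i$ and $i+1$ are pairwise distinct. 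I would then peel off the letters of $\underline{w}$ lying outside the factor $s_is_{i+1}s_i$, one at a time, working from both ends inward; at each step, removing an end letter of a reduced word leaves a reduced word whose support is exactly the set of remaining letters, so Proposition~\ref{prop:product} applies and produces one more $I$ factor. After $\ell-3$ steps, $Q_{e,w}$ is combinatorially equivalent to $Q_{e,\,s_is_{i+1}s_i}\times I^{\ell-3}$, and Lemma~\ref{lem:repeated-elements}(1) identifies $Q_{e,\,s_is_{i+1}s_i}$ with the hexagon $\Perm_2$. The ``only if'' direction of~(2) is identical, starting instead from a reduced decomposition with factor $s_{i+1}s_is_{i+2}s_{i+1}$ and using Lemma~\ref{lem:repeated-elements}(2) to arrive at $Q_{e,3412}\times I^{\ell(w)-4}$.

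For the converses I would run a dimension count and then a vertex count. Since $\dim_\R Q_{e,w}=|\supp(w)|$, $c(w)=\ell(w)-|\supp(w)|$, and $\dim_\R Q_{e,3412}=3$, a polytope combinatorially equivalent to $\Perm_2\times I^{\ell(w)-3}$ or to $Q_{e,3412}\times I^{\ell(w)-4}$ has dimension $\ell(w)-1$, so $c(w)=1$; hence $[321;3412](w)=1$ by Theorem~\ref{Tenner-rep}, and by Proposition~\ref{prop:complexity-pattern} the variety $X_w$ is of complexity one and is either smooth or singular (exactly one of the two). To decide which, I would record that $\Perm_2\times I^{\ell(w)-3}$ and $Q_{e,3412}\times I^{\ell(w)-4}$ are never combinatorially equivalent: by Proposition~\ref{vertices_of_BIP} the polytope $Q_{e,3412}$ has $|[e,3412]|=14$ vertices while $\Perm_2$ has $6$, so the two products have $12\cdot 2^{\ell(w)-4}$ and $14\cdot 2^{\ell(w)-4}$ vertices. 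Combining this with the ``only if'' directions already proved: if $Q_{e,w}$ is combinatorially equivalent to $\Perm_2\times I^{\ell(w)-3}$, then $X_w$ cannot be singular of complexity one (that would force the other combinatorial type), hence is smooth; symmetrically in part~(2).

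The routine but slightly fussy point will be the bookkeeping in the peeling argument: at each stage one must check that the remaining product of simple reflections -- still multiplied by the core word $s_is_{i+1}s_i$ (resp.\ $s_{i+1}s_is_{i+2}s_{i+1}$) -- is a reduced expression whose support is precisely the set of remaining letters, so that the hypotheses $v\le w$ and $s_r\notin\supp(w)$ of Proposition~\ref{prop:product} are genuinely satisfied. This is the only real obstacle; once it is set up, the combinatorial equivalences follow by an immediate induction on the number of letters outside the factor, and the converses reduce to the elementary fact that $\Perm_2$ and $Q_{e,3412}$ have $6$ and $14$ vertices, respectively.
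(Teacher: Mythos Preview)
Your proposal is correct and follows essentially the same route as the paper: the ``only if'' directions are obtained by peeling off the distinct simple reflections via Proposition~\ref{prop:product} until only the core factor remains and then invoking Lemma~\ref{lem:repeated-elements}, while the converses come from the dimension count $\dim_\R Q_{e,w}=\ell(w)-1\Rightarrow c(w)=1$ together with the observation that the two target polytopes are not combinatorially equivalent. The paper leaves this last distinction implicit (writing only ``it suffices to prove the `only if' part''), whereas you make it explicit with the vertex count $12\cdot 2^{\ell(w)-4}\neq 14\cdot 2^{\ell(w)-4}$, which is a welcome clarification.
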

\begin{proof} 
Since both the polytopes $\Perm_2\times I^{\ell(w)-3}$ and $Q_{e,3412}\times I^{\ell(w)-4}$ are $\ell(w)-1$ dimensional, we get $c(w)=1$. Hence it suffices to prove the ‘only if’ part in each statement.

If a Schubert variety $X_w$ is smooth and of complexity one, then $w$ has a reduced decomposition $\underline{w}$ of the form:
\begin{equation*}
\underline{w}=s_{i_1}\cdots s_{i_{k-1}} s_{i_{k}} s_{i_{k+1}} s_{i_{k}} s_{i_{k+2}}\cdots s_{i_{\ell-1}}\text{ and }i_{k+1}=i_k+1.
\end{equation*}
Hence the polytope $Q_{e,w}$ is combinatorially equivalent to $Q_{e,s_{i_{k}}s_{i_{k}+1}s_{i_{k}}}\times I^{\ell(w)-3}\cong\Perm_2\times I^{\ell(w)-3}$ by Lemma~\ref{lem:repeated-elements} and Proposition~\ref{prop:product}.
If $X_{w}$ is singular and of complexity one, then $w$ has a reduced decomposition $\underline{w}$ of the form:
\begin{equation*}
\underline{w}=s_{i_1}\cdots s_{i_{k-1}} s_{i_{k+1}} s_{i_k} s_{i_{k+2}} s_{i_{k+1}} s_{i_{k+3}}\cdots s_{i_{\ell-1}}\text{ and }i_{k+1}=i_k+1,\,i_{k+2}=i_k+2.
\end{equation*}
Hence $Q_{e,w}$ is combinatorially equivalent to $Q_{e,s_{i_{k}+1}s_{i_{k}}s_{i_{k}+2}s_{i_{k}+1}}\times I^{\ell(w)-4}\cong Q_{e,3412}\times I^{\ell(w)-4}$ by Lemma~\ref{lem:repeated-elements} and Proposition~\ref{prop:product}.
\end{proof}

The first statement in the above theorem shows the equivalence between $(1^{\prime})$ and  $(6^{\prime})$ in Theorem~\ref{thm:smooth}, and hence it proves Theorem~\ref{thm:smooth}.
The second statement in the above theorem shows the equivalence between $(1^{\prime\prime})$ and  $(6^{\prime\prime})$ in Theorem~\ref{thm:singular}, and hence it proves Theorem~\ref{thm:singular}.

Note that for a permutation $w$ in $\mathfrak{S}_{n}$, the following hold:
\begin{enumerate}
\item $w$ avoids the patterns $3412$ and $4231$ if and only if $w^{-1}$ avoids those patterns, and
\item $[321;3412](w)=1$ if and only if $[321;3412](w^{-1})=1$.
\end{enumerate}
Hence $X_{w}$ is smooth and of complexity one if and only if $X_{w^{-1}}$ is smooth and of complexity one.
\begin{corollary}
For $w\in\mathfrak{S}_{n}$, if $[321;3412](w)=1$, then the Bruhat interval polytopes $Q_{e,w}$ and $Q_{e,w^{-1}}$ are combinatorially equivalent.
\end{corollary}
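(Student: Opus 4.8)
The plan is to read the answer off Theorem~\ref{thm:poly-comb}. The hypothesis $[321;3412](w)=1$ gives $c(w)=1$ by Theorem~\ref{Tenner-rep}, and Proposition~\ref{prop:complexity-pattern} then places $w$ in exactly one of two mutually exclusive cases: either $X_w$ is smooth (i.e.\ $w$ contains $321$ exactly once and avoids $3412$) or $X_w$ is singular (i.e.\ $w$ contains $3412$ exactly once and avoids $321$). By Theorem~\ref{thm:poly-comb}, in the first case $Q_{e,w}$ is combinatorially equivalent to $\Perm_2\times I^{\ell(w)-3}$, and in the second case to $Q_{e,3412}\times I^{\ell(w)-4}$. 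In particular the combinatorial type of $Q_{e,w}$ is completely determined by the pair consisting of the length $\ell(w)$ together with which of the two cases $w$ lies in.

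The remaining point is therefore that this pair is invariant under $w\mapsto w^{-1}$. Length is immediate: $\ell(w)=\ell(w^{-1})$. For the case distinction, I would use the standard observation that passing to inverses gives a bijection between the occurrences of a pattern $p$ in $w$ and the occurrences of $p^{-1}$ in $w^{-1}$ — a $p$-pattern in $w$ occupying the positions $i_1<\cdots<i_k$ corresponds to the $p^{-1}$-pattern in $w^{-1}$ occupying the \emph{values} of that subsequence. Since $321$, $3412$ (and also $4231$) are each equal to their own inverse, this bijection preserves the number of $321$-patterns and the number of $3412$-patterns \emph{separately}. Consequently $w$ is smooth and of complexity one precisely when $w^{-1}$ is, and likewise for the singular case, so $Q_{e,w}$ and $Q_{e,w^{-1}}$ have the same combinatorial type.

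I do not expect any genuine obstacle here; the one place that requires a little care is verifying the inversion bijection on pattern occurrences and, crucially, that it respects the two pattern counts individually rather than merely their sum — but this is a routine combinatorial check. An essentially equivalent alternative route would be to invoke directly the equivalences $(1^{\prime})\Leftrightarrow(6^{\prime})$ of Theorem~\ref{thm:smooth} and $(1^{\prime\prime})\Leftrightarrow(6^{\prime\prime})$ of Theorem~\ref{thm:singular}, in place of Theorem~\ref{thm:poly-comb}, combined with the already-noted fact that smoothness and complexity one are preserved under taking inverses.
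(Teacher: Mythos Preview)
Your proposal is correct and follows essentially the same approach as the paper: the paper records immediately before the corollary that $w$ avoids $3412$ and $4231$ iff $w^{-1}$ does, and that $[321;3412](w)=1$ iff $[321;3412](w^{-1})=1$, so smoothness and complexity one are preserved under inversion, and then the corollary is read off directly from Theorem~\ref{thm:poly-comb}. Your slightly more detailed justification of the pattern-count invariance under inversion (via the $p\leftrightarrow p^{-1}$ bijection and self-inverseness of $321$ and $3412$) is exactly the ``routine combinatorial check'' underlying the paper's one-line observation.
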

Therefore, for a Schubert variety $X_{w}$ of complexity one, $X_{w}$ is smooth if and only if the moment polytope $Q_{e,w^{-1}}$ is simple.

\section{Flag Bott manifolds}\label{sec:flag-bott}
Like as a Bott--Samelson variety is diffeomorphic to a Bott manifold with a higher rank torus action, a flag Bott--Samelson variety is diffeomorphic to a \defi{flag Bott manifold} with a higher rank torus action. Whereas a Bott manifold is a toric variety, a flag Bott manifold is not a toric variety in general, but it becomes a GKM manifold, see~\cite{KLSS18}.
We have previously observed in Theorem~\ref{thm:main1} that there is an isomorphism between a smooth Schubert variety and a flag Bott--Samelson variety. Using the diffeomorphism between a flag Bott--Samelson variety and a certain flag Bott manifold, we will provide a formula for the  cohomology ring of a smooth Schubert variety of complexity one.

Recall that for a holomorphic vector bundle $E$ over a complex manifold $M$, there is an associated flag-bundle $\flag(E)$ obtained from $E$ by replacing each fiber $E_p$ by the full flag manifold $\flag(E_p)$.

\begin{definition}[{\cite[Deifnition~2.1]{KLSS18}}]
A \defi{flag Bott tower} is an iterated flag-bundle
\begin{equation*}
\mathcal{F}_{r}\overset{\pi_r}\longrightarrow \mathcal{F}_{r-1}\overset{\pi_{r-1}}\longrightarrow\cdots \overset{\pi_2}\longrightarrow \mathcal{F}_1 \overset{\pi_1}\longrightarrow \mathcal{F}_0=\{\text{a point}\}
\end{equation*} of manifolds $\mathcal{F}_k=\flag\left(\underline{\C}\oplus\bigoplus_{m=1}^{n_k} \xi_k^{(m)}\right)$,
 where $\xi_k^{(m)}$ is a holomorphic line bundle over $\mathcal{F}_{k-1}$ for each $1\leq m \leq n_{k}$ and $1\leq k\leq r$. Each $\mathcal{F}_{k}$ is called a \defi{flag Bott manifold} (of height $k$).
\end{definition}
Because we are considering an iterated full flag-bundle, there is an isomorphism
 \[
	\psi \colon \Z^{n_1} \times \cdots \times \Z^{n_r} \to \textup{Pic}(F_r).
	\]
Therefore, there is a sequence
$$\left(\a_{1,k}^{(m)},\a_{2,k}^{(m)},\dots,\a_{k-1,k}^{(m)}\right)$$ of integer vectors with $\a_{j,k}^{(m)}\in\Z^{n_j}$ which maps to $\xi_k^{(m)}$ via $\psi$. Hence a set  $\{\a_{j,k}^{(m)}\}_{1 \leq m \leq n_k, 1 \leq j<k\leq r}$ of integer vectors determines a flag Bott manifold.

\begin{theorem}\cite[Theorem~{4.10}]{FLS} \label{thm:flag-bott-degeneration}
Let $\mathcal{I}=(I_1,\ldots,I_r)$ be a sequence of subsets of~$[n]$, where $I_k=\{u_{k,1},\ldots,u_{k,n_k}\}$ for $1\leq k\leq r$. Assume that each $I_k$ is an interval. Then the flag Bott--Samelson variety $Z_{\mathcal{I}}$ is diffeomorphic to a flag Bott manifold~$\mathcal{F}_r$ determined by the vectors $$\a_{j,k}^{(m)}=(\a_{j,k}^{(m)}(1),\ldots,\a_{j,k}^{(m)}(n_j))\in \Z^{n_j}\qquad (1\leq m\leq n_k\text{ and }1\leq j<k\leq r)$$ where
\begin{equation*}\label{eq:flag-bott-integers}
\a_{j,k}^{(m)}(p)=\langle \mathbf{e}_{u_{j,p}} -\mathbf{e}_{u_{j,n_j+1}}, \mathbf{e}_{u_{k,m}} - \mathbf{e}_{u_{k,n_k+1}} \rangle
\end{equation*}
for $1\leq p\leq n_j$. Here, $\mathbf{e}_{u_{k,1}},\dots,\mathbf{e}_{u_{k,n_k+1}}$ are the standard basis vectors of $\R^{n_k+1}$ for $1 \leq k \leq r$.
\end{theorem}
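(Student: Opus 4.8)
The plan is to argue by induction on $r$, in the spirit of Theorem~\ref{thm:Gr-Ka}, using the elementary fact that a filtered smooth complex vector bundle splits smoothly as its associated graded bundle (so that no deformation of complex structure is needed). First I would make the iterated-bundle structure of $Z_{\mathcal I}$ explicit: forgetting the last factor gives a fibre bundle $\pi_r\colon Z_{\mathcal I}\to Z_{(I_1,\dots,I_{r-1})}$ with fibre $P_{I_r}/B$, and since $I_r$ is an interval with $|I_r|=n_r$ the group $W_{I_r}$ is the symmetric group on the $n_r+1$ indices $u_{r,1}<\dots<u_{r,n_r+1}$, so $P_{I_r}/B\cong\flag(\C^{n_r+1})$. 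Writing $P_{I_r}=L_{I_r}\ltimes U_{I_r}$ with $U_{I_r}\subseteq B$, one checks that $P_{I_r}/B\cong L_{I_r}/(B\cap L_{I_r})$ and that the left $B$-action descends through $B\twoheadrightarrow B\cap L_{I_r}$ and then onto the upper-triangular Borel $B_{n_r+1}$ of the relevant $\GL_{n_r+1}$-block. Hence $Z_{\mathcal I}=\flag(V_r)$ for the rank-$(n_r+1)$ holomorphic vector bundle $V_r\coloneqq \mathcal E_{r-1}\times_{B}\C^{n_r+1}$ over $Z_{(I_1,\dots,I_{r-1})}$, where $\mathcal E_{r-1}$ is the principal $B$-bundle assembled from the first $r-1$ factors and $B$ acts on $\C^{n_r+1}$ via a homomorphism $\rho_r\colon B\to\GL_{n_r+1}$ with image $B_{n_r+1}$. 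By the inductive hypothesis $Z_{(I_1,\dots,I_{r-1})}$ is diffeomorphic to a flag Bott manifold $\mathcal F_{r-1}$ of height $r-1$.

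Because $\rho_r$ lands in $B_{n_r+1}$, the bundle $V_r$ carries a full flag of holomorphic subbundles $0=V_r^{(0)}\subset V_r^{(1)}\subset\dots\subset V_r^{(n_r+1)}=V_r$, which splits smoothly (choose a Hermitian metric) as $V_r\cong\bigoplus_{m=1}^{n_r+1}L_r^{(m)}$ with $L_r^{(m)}=V_r^{(m)}/V_r^{(m-1)}=\mathcal E_{r-1}\times_{B}\C_{\mathbf e_{u_{r,m}}}$, the line bundle attached to the character of $T\subseteq B$ that reads off the $u_{r,m}$-th diagonal entry. Since a smooth isomorphism of vector bundles induces a diffeomorphism of flag bundles and $\flag(W)=\flag(W\otimes L)$ for any line bundle $L$, twisting by $(L_r^{(n_r+1)})^{-1}$ gives
\[
Z_{\mathcal I}=\flag(V_r)\cong\flag\Bigl(\underline{\C}\oplus\textstyle\bigoplus_{m=1}^{n_r}\xi_r^{(m)}\Bigr),\qquad \xi_r^{(m)}\coloneqq L_r^{(m)}\otimes(L_r^{(n_r+1)})^{-1},
\]
with $\xi_r^{(m)}$ the line bundle over $\mathcal F_{r-1}$ attached to the character $\mathbf e_{u_{r,m}}-\mathbf e_{u_{r,n_r+1}}$. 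This exhibits $Z_{\mathcal I}$ as the top of a flag Bott tower of height $r$ over $\mathcal F_{r-1}$.

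It remains to compute $\xi_r^{(m)}$ in $\textup{Pic}(\mathcal F_{r-1})\cong\Z^{n_1}\times\cdots\times\Z^{n_{r-1}}$. Here I would carry along the sharper inductive claim that the identifying isomorphism $\psi$ sends the $p$-th generator of the $j$-th factor to the class of $\mathcal E_{j-1}\times_{B}\C_{\mathbf e_{u_{j,p}}-\mathbf e_{u_{j,n_j+1}}}$; then $\psi^{-1}(\xi_r^{(m)})$ is obtained by the standard procedure of pairing the defining character $\mathbf e_{u_{r,m}}-\mathbf e_{u_{r,n_r+1}}$ against the coweights $\mathbf e_{u_{j,p}}-\mathbf e_{u_{j,n_j+1}}$ attached to the earlier stages — equivalently, by restricting $\xi_r^{(m)}$ to the fibre flag manifold of stage $j$ and reading the Chern number of the relevant tautological quotient line bundle. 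This yields exactly
\[
\a_{j,r}^{(m)}(p)=\langle \mathbf e_{u_{j,p}}-\mathbf e_{u_{j,n_j+1}},\ \mathbf e_{u_{r,m}}-\mathbf e_{u_{r,n_r+1}}\rangle,
\]
the asserted formula, which for $n_1=\dots=n_r=1$ reduces to \eqref{eq:bott-integers}. As the diffeomorphism type of $\flag(\cdot)$ of a vector bundle depends only on the underlying smooth bundle and a line bundle is determined by its first Chern class, these numbers identify $Z_{\mathcal I}$ up to diffeomorphism with the flag Bott manifold prescribed by the $\a_{j,k}^{(m)}$, completing the induction.

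The routine-but-delicate step is the first paragraph: verifying that the left $B$-action on $P_{I_r}/B$ factors precisely through the Borel of the correct $\GL_{n_r+1}$-block, which is where the hypothesis that each $I_k$ be an interval is used (for a non-interval $I_k$ the fibre is a product of flag manifolds, outside the class of flag Bott manifolds). The genuine technical heart, however, is the bookkeeping of the last paragraph — propagating the normalisation $-\mathbf e_{u_{k,n_k+1}}$ consistently through all earlier stages and checking that the first Chern class of a character line bundle over a flag Bott manifold is computed by these generalised Cartan pairings. The smooth splitting of the filtered bundle and the invariance of $\flag(\cdot)$ under smooth isomorphisms and line-bundle twists are elementary.
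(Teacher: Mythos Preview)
The paper does not contain its own proof of this statement: Theorem~\ref{thm:flag-bott-degeneration} is quoted verbatim from~\cite[Theorem~4.10]{FLS} and used as a black box in the proofs of Theorem~\ref{thm:main2} and Corollary~\ref{cor_cohomology_ring_of_Xw}. There is therefore nothing in this paper to compare your proposal against.

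That said, your sketch is a reasonable outline of the expected argument. It parallels the Grossberg--Karshon proof of Theorem~\ref{thm:Gr-Ka} stage by stage, the only new feature being that each fibre is a full flag manifold rather than a $\C P^1$. The two points you yourself flag --- that the left $B$-action on $P_{I_r}/B$ factors through the Borel of the $\GL_{n_r+1}$-block (which is precisely where the interval hypothesis enters), and the inductive identification of $\textup{Pic}(\mathcal F_{r-1})\cong\Z^{n_1}\times\cdots\times\Z^{n_{r-1}}$ together with the pairing computation --- are indeed where all the content lies. Neither conceals a real obstruction, but both need to be written out carefully rather than asserted; in particular the ``sharper inductive claim'' about what $\psi$ does to the generators is the statement that actually carries the induction, and you have only stated it, not verified it. If you intend this as a self-contained proof rather than a summary of~\cite{FLS}, that verification is what is missing.
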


\begin{theorem}\label{thm:main2}
Let $X_w$ be a smooth Schubert variety of complexity one. Then $X_w$ is diffeomorphic to a flag Bott  manifold of height $\ell(w)-2$

\begin{tikzcd}
{\mathcal{F}_{\ell(w)-2}}\arrow[r, "\pi_{\ell(w)-2}"]  & {\mathcal{F}_{\ell(w)-3}} \arrow[r, "\pi_{\ell(w)-3}"] & {\cdots} \arrow[r,"\pi_2"] & {\mathcal{F}_1} \arrow[r,"\pi_1"] & {\mathcal{F}_0=\{\text{a point}\}},
\end{tikzcd}
\\
where $\mathcal{F}_q\to \mathcal{F}_{q-1}$ is a $\flag(\C^3)$-bundle for some $1\leq q\leq \ell(w)-2$ and $\mathcal{F}_k\to \mathcal{F}_{k-1}$ is a $\C P^1$-bundle for every $k\neq q$. Furthermore, the iterated bundle structure is completely determined by a reduced decomposition of $w$.
\end{theorem}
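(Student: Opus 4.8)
The plan is to combine Theorem~\ref{thm:main1} with Theorem~\ref{thm:flag-bott-degeneration} directly. By Proposition~\ref{prop:complexity-pattern} and Theorem~\ref{thm:Daly}, since $X_w$ is smooth and of complexity one, $w$ admits a reduced decomposition $\underline{w}=s_{i_1}\cdots s_{i_{\ell}}$ containing a factor $s_i s_{i+1} s_i$ (say in positions $q,q+1,q+2$) and no other repetitions. First I would invoke the construction in the proof of Theorem~\ref{thm:main1}: form the sequence $\mathcal{I}=(I_1,\dots,I_r)$ with $r=\ell-2$, where $I_q=\{i,i+1\}$ and $I_k$ is the singleton obtained from the remaining letters of $\underline{w}$; these subsets are pairwise disjoint, so $X_w$ is $\T$-equivariantly isomorphic to the flag Bott--Samelson variety $Z_{\mathcal{I}}$.

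Next I would apply Theorem~\ref{thm:flag-bott-degeneration}. Each $I_k$ with $k\neq q$ is a single point (trivially an interval) and $I_q=\{i,i+1\}$ is an interval, so the hypothesis is satisfied. The theorem yields a diffeomorphism between $Z_{\mathcal{I}}$ and a flag Bott manifold $\mathcal{F}_r$ with $r=\ell(w)-2$, where $\mathcal{F}_k\to\mathcal{F}_{k-1}$ is $\flag\bigl(\underline{\C}\oplus\bigoplus_{m=1}^{n_k}\xi_k^{(m)}\bigr)$ with $n_k=|I_k|$. Since $n_q=2$, the $q$-th stage is a $\flag(\C^3)$-bundle, and since $n_k=1$ for $k\neq q$, every other stage $\mathcal{F}_k\to\mathcal{F}_{k-1}$ is a $\P(\underline{\C}\oplus\xi_k)=\C P^1$-bundle. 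Composing the two diffeomorphisms $X_w\cong Z_{\mathcal{I}}\xrightarrow{\sim}\mathcal{F}_r$ gives the asserted diffeomorphism of height $\ell(w)-2$.

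For the last sentence, I would observe that the integer vectors $\a_{j,k}^{(m)}$ describing the line bundles $\xi_k^{(m)}$ are given by the explicit inner-product formula in Theorem~\ref{thm:flag-bott-degeneration}, in which the data entering are precisely the elements $u_{k,m}$ of the intervals $I_k$; and the $I_k$ are read off from the positions and values of the letters of the chosen reduced decomposition $\underline{w}$. Hence the full iterated bundle structure of $\mathcal{F}_r$ is determined by $\underline{w}$. I should note that the reduced decomposition guaranteed by Theorem~\ref{thm:Daly} is not unique, so strictly speaking ``a reduced decomposition'' here means any one of this special form; I would remark that different such choices yield diffeomorphic (indeed, by the same argument, $\T'$-equivariantly the ``same'') flag Bott towers, or simply state the theorem for a fixed such decomposition.

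The main obstacle is essentially bookkeeping rather than a genuine difficulty: one must be careful that the subsets $I_k$ in the proof of Theorem~\ref{thm:main1} are genuinely intervals (the singletons are, and $\{i,i+1\}$ is, so this is fine), and that the indexing/order of the stages of $\mathcal{I}$ matches the order of the flag-bundle tower in Theorem~\ref{thm:flag-bott-degeneration}, so that the $\flag(\C^3)$-bundle really does sit at level $q$. I do not anticipate needing any new geometric input beyond the two cited theorems; the only thing requiring a line of care is articulating precisely what ``completely determined by a reduced decomposition'' means in view of the non-uniqueness just mentioned.
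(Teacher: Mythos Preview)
Your proposal is correct and follows essentially the same approach as the paper: invoke the construction from Theorem~\ref{thm:main1} to obtain $X_w\cong Z_{\mathcal I}$ with $\mathcal I$ consisting of singletons and one two-element interval $\{i,i+1\}$, then apply Theorem~\ref{thm:flag-bott-degeneration} to conclude. The paper's proof additionally spells out the integer vectors $\mathbf a_{j,k}^{(m)}$ case-by-case for $k<q$, $k=q$, $k>q$, but this is exactly the explicit formula you indicate in your final paragraph.
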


\begin{proof}
Let $\ell(w)=\ell$.
From the proof of  Theorem~\ref{thm:main1}, we may assume that $X_w$ is isomorphic to a flag Bott--Samelson variety $Z_{\mathcal{I}}$, where $\mathcal I = (I_1,\dots,I_{\ell-2})$ of length $\ell-2$ consists of the sets
\[
	I_k = \begin{cases}
	\{i_k\} & \text{ if } 1 \leq k < q, \\
	\{i, i+1\} & \text{ if } k = q, \\
	\{i_{k+2}\} & \text{ if } k > q.
	\end{cases}
	\]
Note that $\underline{w}=({i_1},\dots, {i_{q-1}},i,{i+1},i,{i_{q+3}},\dots ,{i_{\ell}})$.
Thus $X_w$ is diffeomorphic to a flag Bott manifold of height $\ell-2$ by Theorem~\ref{thm:flag-bott-degeneration}. Furthermore, for each $1\leq k\leq \ell-2$, the manifold $\mathcal{F}_k$ is determined as follows. 
\begin{enumerate}
\item If $1\leq k<q$, then $n_k=1$ and $\mathcal{F}_k=\flag(\underline{\C}\oplus \xi_k)$, where $\xi_k$ is a holomorphic line bundle determined by a sequence $(\mathbf{a}_{1,k}^{(1)},\dots,\mathbf{a}_{k-1,k}^{(1)})$ of integers, where
\[
\mathbf{a}_{j,k}^{(1)} = \langle  \mathbf{e}_{i_j} - \mathbf{e}_{i_j+1} , \mathbf{e}_{i_k} - \mathbf{e}_{i_{k}+1}\rangle \in \Z.
\]
\item If $k=q$, then $n_j=1$ for $j < k = q$ and $n_k=2$ and $\mathcal{F}_k=\flag(\underline{\C}\oplus \xi_k^{(1)}\oplus \xi_k^{(2)})$, where $\xi_k^{(1)}$ and $\xi_k^{(2)}$ determined by sequences 
$( \mathbf{a}_{1,q}^{(1)},\dots,\mathbf{a}_{q-1,q}^{(1)})$ and $(\mathbf{a}_{1,q}^{(2)},\dots,\mathbf{a}_{q-1,q}^{(2)})$ of integers, respectively. Here, we have that
\[
\begin{split}
\mathbf{a}_{j,q}^{(1)}  &= \langle \mathbf{e}_{i_j}-\mathbf{e}_{i_j+1}, \mathbf{e}_{i} - \mathbf{e}_{i+2}\rangle \in \Z,\\
\mathbf{a}_{j,q}^{(2)}  &= \langle \mathbf{e}_{i_j} - \mathbf{e}_{i_j+1},\mathbf{e}_{i+1}-\mathbf{e}_{i+2}\rangle \in \Z.
\end{split}
\]
\item If $k>q$, then $n_k=1$ and $\mathcal{F}_k=\flag(\underline{\C}\oplus \xi_k)$, where $\xi_k$ is a holomorphic line bundle  determined by a sequence $(\mathbf{a}_{1,k}^{(1)},\dots,\mathbf{a}_{k-1,k}^{(1)})$ of integer vectors, where
\[
\mathbf{a}_{j,k}^{(1)}
= \begin{cases}
\langle \mathbf{e}_{i_j} - \mathbf{e}_{i_j+1}, \mathbf{e}_{i_{k+2}} - \mathbf{e}_{i_{k+2}+1}\rangle &\text{ if } j < q,\\
(\langle \mathbf{e}_{i} - \mathbf{e}_{i+2}, \mathbf{e}_{i_{k+2}} - \mathbf{e}_{i_{k+2}+1}\rangle, \langle \mathbf{e}_{i+1} - \mathbf{e}_{i+2}, \mathbf{e}_{i_{k+2}} - \mathbf{e}_{i_{k+2}+1}\rangle) 
& \text{ if } j = q, \\
\langle \mathbf{e}_{i_{j+2}} - \mathbf{e}_{i_{j+2}+1}, \mathbf{e}_{i_{k+2}} - \mathbf{e}_{i_{k+2}+1}\rangle  &\text{ if } j > q.
\end{cases}
\]
\end{enumerate}
This proves the theorem.
\end{proof}

Combining Theorem~\ref{thm:Gr-Ka} with Theorems~\ref{thm:main1} and \ref{thm:main2}, we  conclude that every smooth Schubert variety of complexity $\leq 1$ is isomorphic to a flag Bott--Samelson variety, and hence it is diffeomorphic to a flag Bott manifold whose height is determined by the length $\ell(w)$ and the complexity $c(w)$.
Therefore, using~\cite[Corollary~4.4]{KKLS20}, we provide the cohomology ring $H^\ast(X_w;\Z)$ of a smooth Schubert variety $X_w$ with $c(w)\leq 1$.
\begin{corollary}\label{cor_cohomology_ring_of_Xw}
	Let $X_w$ be a smooth Schubert variety of complexity one. Suppose that $\underline{w} = s_{i_1}\dots s_{i_{q-1}} s_{i} s_{i+1} s_{i} s_{i_{q+3}} \dots s_{i_{\ell}}$ is a reduced decomposition of $w$. Then  the cohomology ring $H^{\ast} (X_w;\Z)$ is given as follows:
	\[
	H^{\ast} (X_w;\Z) \cong \Z [ y_{j,1},\dots,y_{j,n_j+1} \mid 1 \leq j \leq \ell-2]/\langle I_1,\dots,I_{\ell-2} \rangle.
	\]
	Here, $y_{j,k}$'s are degree two elements and $I_{k}$ is the ideal given as follows.
	\[
	\begin{split}
	\text{ For }k<q:&\quad  I_k = (1-y_{k,1})(1-y_{k,2}) - \left(1-\sum_{j=1}^{k-1} \mathbf{a}_{j,k}^{(1)} y_{j,1} \right), \\
	\text{ for } k =q:& \quad I_q = (1-y_{q,1})(1-y_{q,2})(1-y_{q,3}) \\
	&\qquad\qquad- \left(1-\sum_{j=1}^{q-1} \mathbf{a}_{j,q}^{(1)}y_{j,1} \right) \left(1-\sum_{j=1}^{q-1} \mathbf{a}_{j,q}^{(2)} y_{j,1}\right), \\
	\text{ for }k > q: & \quad I_k = (1-y_{k,1})(1-y_{k,2}) \\
	&\qquad\qquad- \left(1- \sum_{\substack{1 \leq j \leq k-1, \\ j \neq q}} \mathbf{a}_{j,k}^{(1)} y_{j,1} - (\mathbf a_{q,k}^{(1)} (1) y_{q,1} + \mathbf a_{q,k}^{(1)}(2) y_{q,2}) \right).
	\end{split}
	\]
\end{corollary}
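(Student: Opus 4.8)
The plan is to transport the known presentation of the cohomology ring of a flag Bott manifold along the diffeomorphisms already established. By Theorem~\ref{thm:main1} the smooth Schubert variety $X_w$ of complexity one is $\T$-equivariantly isomorphic to the flag Bott--Samelson variety $Z_{\mathcal I}$ with $\mathcal I=(I_1,\dots,I_{\ell-2})$ as in the proof of Theorem~\ref{thm:main1}, and by Theorem~\ref{thm:main2} (equivalently Theorem~\ref{thm:flag-bott-degeneration}) the variety $Z_{\mathcal I}$ is diffeomorphic to the flag Bott manifold $\mathcal F_{\ell-2}$ whose $k$th stage is $\mathcal F_k=\flag(\underline{\C}\oplus\bigoplus_{m=1}^{n_k}\xi_k^{(m)})$ with $n_k=1$ for $k\neq q$, $n_q=2$, and with the first Chern classes $c_1(\xi_k^{(m)})$ given by the vectors $\mathbf{a}_{j,k}^{(m)}$ recorded there. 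Since integral cohomology is a diffeomorphism invariant, it suffices to compute $H^\ast(\mathcal F_{\ell-2};\Z)$.

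First I would invoke \cite[Corollary~4.4]{KKLS20}, which presents the integral cohomology ring of an arbitrary flag Bott manifold as the quotient of the polynomial ring $\Z[y_{j,s}\mid 1\le j\le r,\ 1\le s\le n_j+1]$, with $\deg y_{j,s}=2$, by the ideal generated for each $1\le k\le r$ by the element
\[
\prod_{s=1}^{n_k+1}(1-y_{k,s})\;-\;\prod_{m=1}^{n_k}\bigl(1-c_1(\xi_k^{(m)})\bigr),
\]
where the trivial summand $\underline{\C}$ contributes the invisible factor $1-0$, and where $c_1(\xi_k^{(m)})$ is written in the generators through the isomorphism $\psi\colon\Z^{n_1}\times\cdots\times\Z^{n_r}\stackrel{\sim}{\longrightarrow}\textup{Pic}(\mathcal F_r)$ as $c_1(\xi_k^{(m)})=\sum_{j<k}\sum_{p=1}^{n_j}\mathbf{a}_{j,k}^{(m)}(p)\,y_{j,p}$. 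At a stage $\mathcal F_j$ with $n_j=1$ the group $\textup{Pic}(\mathcal F_j)$ is free of rank one, generated by $y_{j,1}$, with $y_{j,2}$ determined by the relation $I_j$; this explains why only $y_{j,1}$ occurs in $c_1(\xi_k^{(m)})$ when $j\neq q$.

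Then I would substitute the Chern data from Theorem~\ref{thm:main2} into this template, case by case on the position of $k$ relative to $q$. For $k<q$ one gets $c_1(\xi_k)=\sum_{j=1}^{k-1}\mathbf{a}_{j,k}^{(1)}y_{j,1}$, hence the stated $I_k$. For $k=q$ one gets $c_1(\xi_q^{(1)})=\sum_{j=1}^{q-1}\mathbf{a}_{j,q}^{(1)}y_{j,1}$ and $c_1(\xi_q^{(2)})=\sum_{j=1}^{q-1}\mathbf{a}_{j,q}^{(2)}y_{j,1}$, so the relation acquires the three linear factors on the left and the two linear forms on the right. For $k>q$ the single vector $\mathbf{a}_{q,k}^{(1)}\in\Z^2$ contributes $\mathbf{a}_{q,k}^{(1)}(1)y_{q,1}+\mathbf{a}_{q,k}^{(1)}(2)y_{q,2}$, while the remaining indices $j\neq q$ contribute $\mathbf{a}_{j,k}^{(1)}y_{j,1}$, which is exactly the last displayed generator. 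Matching all three cases with the asserted ideals $I_k$ finishes the proof.

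The step I expect to be the main obstacle is purely one of conventions: aligning the sign and duality normalization of \cite[Corollary~4.4]{KKLS20}, namely which tautological classes are called $y_{j,s}$ and whether it is the defining bundle or its dual whose Chern class appears, so that the factors genuinely come out as $1-y_{k,s}$ and $1-c_1(\xi_k^{(m)})$ rather than with opposite signs, together with confirming that at the $n_j=1$ stages it is $y_{j,1}$ (and not $y_{j,2}$) that gets fed into the later relations, consistently with the pairings in Theorems~\ref{thm:flag-bott-degeneration} and~\ref{thm:main2}. Once these bookkeeping points are pinned down, the rest is a direct substitution with no essential content.
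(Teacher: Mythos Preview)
Your proposal is correct and follows exactly the paper's own route: the paper does not give a separate proof of this corollary but simply states that, having established the diffeomorphism $X_w\cong Z_{\mathcal I}\cong\mathcal F_{\ell-2}$ via Theorems~\ref{thm:main1} and~\ref{thm:main2}, one invokes \cite[Corollary~4.4]{KKLS20} to obtain the presentation. Your detailed case-by-case substitution of the Chern data from Theorem~\ref{thm:main2} into the general template is precisely the bookkeeping the paper leaves implicit.
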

\begin{example}\label{example_coh_description}
	Suppose that $w = s_1s_2s_3s_4s_3$. Then the Schubert variety $X_w$ is smooth of complexity one. 
	By Theorem~\ref{thm:flag-bott-degeneration}, the Schubert variety $X_w$ is diffeomorphic to a flag Bott manifold of height~$3$ with $n_1 = n_2 =1$ and $n_3 = 2$ which is determined by the following integer vectors:
	\[
	\begin{split}
	\mathbf{a}_{1,2}^{(1)} &= \langle \mathbf e_1 - \mathbf e_2, \mathbf e_2 - \mathbf e_3 \rangle= -1, \\
	\mathbf{a}_{1,3}^{(1)} &= \langle \mathbf e_{1} - \mathbf{e}_2, \mathbf e_3 - \mathbf e_5 \rangle = 0, \quad  \mathbf{a}_{1,3}^{(2)} = \langle \mathbf e_1 - \mathbf e_2, \mathbf e_4 - \mathbf e_5 \rangle = 0, \\
	\mathbf{a}_{2,3}^{(1)} &= \langle \mathbf e_2 - \mathbf e_3, \mathbf e_3 - \mathbf e_5 \rangle = -1, \quad
	\mathbf{a}_{2,3}^{(2)} = \langle \mathbf e_2 - \mathbf e_3, \mathbf e_4 - \mathbf e_5 \rangle = 0.\\
	\end{split}
	\]
	Therefore, by Corollary~\ref{cor_cohomology_ring_of_Xw}, the cohomology ring of $X_w$ is 
	\[
	H^{\ast}(X_w; \Z) \cong \Z[y_{j,1},\dots,y_{y,n_j+1} \mid 1 \leq j \leq 3]/ \langle I_1,\dots,I_3 \rangle, 
	\]
	where
	\[
	\begin{split}
	I_1 &= (1-y_{1,1})(1-y_{1,2}) - 1, \\
	I_2 &= (1-y_{2,1})(1-y_{2,2}) - (1 + y_{1,1}), \\
	I_3 &= (1-y_{3,1})(1-y_{3,2})(1-y_{3,3}) - (1 + y_{2,1}). \\
	\end{split}
	\]
	This gives relations among generators:
		\[
		\begin{split}
		&y_{1,1} + y_{1,2} = y_{1,1} y_{1,2} = 0, \\
		& y_{2,1} + y_{2,2} + y_{1,1} = y_{2,1} y_{2,2} = 0, \\
		& y_{3,1} + y_{3,2} + y_{3,3} + y_{2,1} = y_{3,1}y_{3,2} + y_{3,1}y_{3,3} + y_{3,2} y_{3,3} = y_{3,1}y_{3,2} y_{3,3} = 0.\\
		\end{split}
		\]
Therefore, by setting $y_1 = y_{1,1}$, $y_{2} = y_{2,1}$, $y_{3} = y_{3,1}$, $y_4 = y_{3,2}$, we have that
\[
H^{\ast}(X_{w};\Z) \cong \Z[y_1,\dots,y_4]/I,
\]	
where $I$ is an ideal generated by 
\begin{equation*}
y_1^2, y_2(y_1+y_2), (y_2+y_3)y_3 + (y_2+y_3+y_4)y_4, y_3y_4(y_2+y_3+y_4).
\end{equation*}
\end{example}

We enclose this section by mentioning other studies on the cohomology rings of smooth Schubert varieties. Indeed, the cohomology rings of smooth Schubert varieties are studied in~\cite{VR_cohomology} (also, see~\cite{DMR}). We will demonstrate their result for a specific permutation $23541$, which is the one considered in Example~\ref{example_coh_description}.
First we recall from~\cite{DMR} that this permutation is related to a partition as follows. 
For a given partition $\lambda = (0 \leq \lambda_1 \leq \dots \leq \lambda_n)$, one may associate a permutation $w = w(1) \dots w(n)$ by the recursive rule
\[
w(i) = \max(\{ 1,\dots,\lambda_i\} \setminus \{ w(1),\dots,w(i-1)\}).
\]
For example, if $\lambda = (2,3,5,5,5)$, then the corresponding permutation is $23541$. We denote by $X_{\lambda}$ the Schubert variety given by the permutation coming from $\lambda$. The presentation for the cohomology ring of $X_{\lambda}$ is given as follows:
\begin{equation}\label{eq:coho-ring}
H^{\ast}(X_{\lambda};\Z) \cong \Z[x_1,\dots,x_n]/ \langle h_{\lambda_i - i +1}(i) \mid 1 \leq i \leq n\rangle
\end{equation}
where $h_{m}(N)$ is the complete homogeneous symmetric function:
\[
h_m(N) \coloneqq h_m(x_1,\dots,x_N) = \sum_{1 \leq i_1 \leq \dots \leq i_m \leq N}x_{i_1} \cdots x_{i_m}.
\]
For the partition $\lambda = (2,3,5,5,5)$, the ideal $J:=\langle h_{\lambda_i - i +1}(i) \mid 1 \leq i \leq n\rangle$ on the right hand side of~\eqref{eq:coho-ring} is generated by the complete homogeneous symmetric polynomials
\[
\begin{split}
h_{2}(1)  &= x_1^2, \\
h_2(2) &= x_1^2 + (x_1+x_2)x_2, \\
h_3(3)&= (x_1+x_2+x_3)x_1^2 + (x_1x_2 + x_2^2)(x_2+x_3) + (x_1+x_2+x_3)x_3^2, \\
h_2(4) &= x_1^2 + (x_1+x_2)x_2 + (x_1+x_2+x_3)x_3 + (x_1+x_2+x_3+x_4)x_4, \\
h_1(5) &=x_1+x_2+x_3+x_4+x_5.
\end{split}
\]
Moreover, we get that $\Z[x_1,\dots,x_5]/J \cong \Z[x_1,\dots,x_4]/J'$, where
\[
\begin{split}
J' &= \langle x_1^2, (x_1+x_2)x_2, (x_1+x_2+x_3)x_3^2, 
(x_1+x_2+x_3)x_3 + (x_1+x_2+x_3+x_4)x_4 \rangle. 
\end{split}
\]
By sending $y_1 \mapsto -x_1$, $y_2 \mapsto x_1+x_2$, $y_3 \mapsto x_3$, $y_4 \mapsto x_4$, we obtain an isomorphism $\Z[x_1,\dots,x_5]/I \cong \Z[y_1,\dots,y_5]/J'$ between the two cohomology ring representations. For instance, we have that
\[
(y_2+y_3)y_3 + (y_2+y_3+y_4)y_4
\mapsto (x_1+x_2+x_3)x_3 + (x_1+x_2+x_3+x_4)x_4,
\]
\[
\begin{split}
&y_3y_4(y_2+y_3+y_4) \\
&\quad \mapsto x_3x_4(x_1+x_2+x_3+x_4) \\
&\qquad =x_3((x_1+x_2+x_3)x_3 + (x_1+x_2+x_3+x_4)x_4) - (x_1+x_2+x_3)x_3^2.
\end{split}
\]

\begin{remark}
The notion of flag Bott--Samelson variety can be defined in a general Lie type, where $G$ is a simply-connected semisimple algebraic group   over~$\C$. Furthermore, Proposition~\ref{prop_multiplication_map_pI}, Corollary~\ref{cor_fBS_and_Schubert_isomorphic}, and Theorem~\ref{thm:flag-bott-degeneration} are still true in a general Lie type.
\end{remark} 

\subsection*{Acknowledgements} 
Lee was supported by IBS-R003-D1.
Masuda was supported in part by JSPS Grant-in-Aid for Scientific Research 19K03472 and a bilateral program between JSPS and RFBR.
Park was supported by the Basic Science Research Program through the National Research Foundation of Korea (NRF) funded by the Government of Korea (NRF-2018R1A6A3A11047606). This work was partly supported by Osaka City University Advanced
Mathematical Institute (MEXT Joint Usage/Research Center on Mathematics
and Theoretical Physics JPMXP0619217849).

\providecommand{\bysame}{\leavevmode\hbox to3em{\hrulefill}\thinspace}
\providecommand{\MR}{\relax\ifhmode\unskip\space\fi MR }
\providecommand{\MRhref}[2]{%
	\href{http://www.ams.org/mathscinet-getitem?mr=#1}{#2}
}
\providecommand{\href}[2]{#2}

\end{document}